\definecolor{darkblue}{rgb}{0,0,0.545098}
\definecolor{darkgreen}{rgb}{0,0.392157,0}
\newtheorem{theorem}{Theorem}[section]
\newtheorem{lemma}[theorem]{Lemma}
\newtheorem{proposition}[theorem]{Proposition}
\theoremstyle{definition}
\theoremstyle{remark}
\newtheorem{remark}[theorem]{Remark}
\numberwithin{equation}{section}
\newcommand{\R}{\mathbb{R}}
\newcommand{\C}{\mathbb{C}}
\newcommand{\func}{\operatorname}
\newenvironment{pf}{\vspace{0.5em}\noindent\textbf{Proof.} }{\quad \hfill $\Box$ \\ \vspace{0.5em}\\}
\newcommand{\AF}{\mathcal{A}_{\mathrm{F}}}
\newcommand{\AC}{\mathcal{A}_{\mathrm{C}}}
\newcommand{\AbF}{\mathcal{A}^B_{\mathrm{F}}}
\newcommand{\AbC}{\mathcal{A}^B_{\mathrm{C}}}
\newcommand{\cH}{\mathcal{H}}
\newcommand{\D}{\mathcal{D}}
\newcommand{\VF}{V_{\mathrm{F}}}
\newcommand{\VC}{V_{\mathrm{C}}}
\newcommand{\WF}{W_{\mathrm{F}}}
\newcommand{\WC}{W_{\mathrm{C}}}
\newcommand{\EF}{E_{\mathrm{F}}}
\newcommand{\EC}{E_{\mathrm{C}}}
\newcommand{\EcF}{\mathcal{E}_\mathrm{F}}
\newcommand{\EcC}{\mathcal{E}_\mathrm{C}}
\newcommand{\LF}{L_{\mathrm{F}}}
\newcommand{\LC}{L_{\mathrm{C}}}
\newcommand{\cLF}{\mathcal{L}_\mathrm{F}}
\newcommand{\cLC}{\mathcal{L}_\mathrm{C}}
\newcommand{\rF}{\rho_{\mathrm{f}}}
\newcommand{\rC}{\rho_{\mathrm{c}}}
\newcommand{\vrF}{\varrho_{\mathrm{f}}}
\newcommand{\vrC}{\varrho_{\mathrm{c}}}
\newcommand{\F}{\mathscr{F}}
\def\Re{\func{Re}}
\title[SLV EQUATION WITH HEAT CONDUCTION]{On the Cauchy problem of the standard linear solid model with heat conduction: Fourier versus Cattaneo}
\author{Marta Pellicer$^{1}$}
\thanks{$^{1}$Dpt. d'Inform\`atica, Matem\`atica Aplicada i Estad\'{\i}stica (EPS), Universitat de Girona, Campus de Montilivi, 17071 Girona, Catalunya (Spain), e-mail: martap@imae.udg.edu}
\author{Belkacem  Said-Houari$^{2}$}
\thanks{$^{2}$Department of Mathematics, College of Sciences, University of
Sharjah, P. O. Box: 27272, Sharjah, United Arab Emirates,
Email:bsaidhouari@gmail.com}
\date{\today}
\begin{document}

\subjclass[2000]{35B37, 35L55, 74D05, 93D15, 93D20. }
\keywords{}
\maketitle
\begin{abstract}
In this paper, we consider the standard linear solid model in $\mathbb{R}^N$ coupled, first, with the Fourier law of heat conduction and, second, with the Cattaneo law. First, we give the appropriate functional setting to prove the well-posedness of both models under certain assumptions on the parameters (that is, $0<\tau\leq \beta$). Second, using the energy method in the Fourier space,  we obtain the optimal decay rate of a norm related to the solution both in the Fourier and the Cattaneo heat conduction models under the same assumptions on the parameters. More concretely, we  prove that, when $0<\tau<\beta$, the model with heat conduction has the same decay rate as the Cauchy problem without heat conduction (see \cite{PellSaid_2016}) both under the Fourier and Cattaneo heat laws. Also, we are  able to see that the difference between using a Fourier or Cattaneo law in the heat conduction is not in the decay rate, but in the fact that the Cattaneo coupling exhibits the well-know regularity loss phenomenon, that is, Cattaneo model requires a higher regularity of the initial data for the solution to decay.
When $0<\tau=\beta$ (that is, when the dissipation comes through the heat conduction) we  still have asymptotic stability in both heat coupling models, but with a slower decay rate. As we prove later, such stability is not possible in the absence of the heat conduction.  We also prove the optimality of the previous decay rates for both models by using the eigenvalues expansion method. Finally, we complete the results in \cite{PellSaid_2016} by showing how the condition $0<\tau< \beta$ is not only sufficient but also necessary for the asymptotic stability of the problem without heat conduction.


\end{abstract}


\section{Introduction}\label{Introduction}
This paper deals with well-posedeness, stability  and decay rates of a class of vibration problems modeling the so-called \emph{standard linear solid model} of viscoelasticity coupled with heat conduction. Without considering the heat conduction effect,   Gorain \& Bose \cite {Gorain_Bose_1998} (see also   \cite{Bose_Gorai_1998}) derived the equation
\begin{equation}\label{Bose_Problem}
\tau u_{ttt}+u_{tt}-a^{2}\Delta u-a^{2}\beta \Delta u_{t}=0,
\end{equation}
as a more realistic approach to model flexible structural systems possessing internal damping (see more details below). By constructing a suitable Lyapunov functional, Gorain \cite{Gorain_2010} investigated the initial and boundary value problem associated to (\ref{Bose_Problem}) an showed an exponential decay of the energy
\begin{equation}\label{Energy_Bounded_domain}
E(t)=\frac{1}{2}\int_\Omega\left\{|\tau u_{tt}+u_t|^2+a^2|\tau\nabla u_t+\nabla u|^2+a^2\tau(\beta-\delta)|\nabla u_t|^2\right\}dx,
\end{equation}
provided that
\begin{equation}\label{condition}
0<\tau<\beta.
\end{equation}
The study of problem (\ref{Bose_Problem}) has been extended to an abstract setting in \cite{FLP_2011} where the authors studied  the regularity of mild and strong solutions of an abstract mathematical model of a flexible space structure under appropriate initial conditions.

Equation (\ref{Bose_Problem}) can be obtained by considering the following relation
 \begin{equation}\label{Constitutive_Equation}
\sigma+\tau\sigma^{\prime}=E(e+\beta e^\prime)
\end{equation}
where $\sigma$ is the stress, $e$ is the strain, $E$ is the Young modulus of the elastic structure and the constants $\tau$ and $\beta$ are small and satisfying \eqref{condition}. If $\tau=\beta=0$ in (\ref{Constitutive_Equation}), then we get $\sigma=Ee$ which indicates that  the vibrations are governed by the free wave equation
\begin{equation*}
u_{tt}-a^{2}\Delta u=0.
\end{equation*}
Under the assumption \eqref{condition}, we obtain \eqref{Bose_Problem},  the so-called standard linear solid model or, also, standard linear model of viscoelasticity
(see \cite{Gorain_2010} or \cite{PellSaid_2016} for more details on the modelling). Surprisingly, this equation is also known as the Moore-Gibson-Thompson equation and arises in Acoustics as an alternative model to the well-know Kuznetsov equation to describe the vibrations of thermally relaxing fluids (see \cite{KatLasPos_2012}, for instance).

This problem has been recently studied by several authors in a bounded domain (apart from the references above, see also \cite{Kaltenbacher_2011},  \cite{Trigg_et_al} or \cite{P-SM-2019} and the references therein, among others) and also in $\R^N$ in \cite{PellSaid_2016} and \cite{Racke_Said_2019} . In these references we can see that the relation \eqref{condition} turns out to be critical for the stability of this problem. Roughly speaking, it has been known that, in the bounded domain problem, the strongly continuous semigroup that describes the dynamics of this problem is exponentially stable provided the dissipative condition \eqref{condition} is fulfilled  and that the system exhibits a chaotic behaviour when $\tau>\beta>0$ (see \cite{Conejero} for this last result). Several works on the Moore-Gibson-Thompson equation with memory have also been published (see, for instance, \cite{LasieckaPata} and the references therein).

Recently, the authors in \cite{PellSaid_2016} showed the well-posedness of the corresponding Cauchy problem, as well as the optimal decay rate of its solutions (using the energy method in the Fourier space to build an appropriate Lyapunov functional, together with the eigenvalues expansion method). The authors of the   recent paper  \cite{Racke_Said_2019} investigated a nonlinear version of \eqref{Bose_Problem} and proved a global existence and decay results for the solution under the smallness assumption on the initial data.

On the other hand, a well-known model for the heat conduction is the Fourier law, that is:
$$\theta_t+\gamma \nabla\cdot q=0$$
and
$$q+\kappa\nabla\theta=0$$
for $\gamma,\beta>0$, where $\theta$ and $q$ stand for the temperature difference and heat flux vector, respectively. Combining the previous two equations we obtain the parabolic heat equation:
$$\theta_t-\gamma\kappa\Delta\theta=0$$
that allows an infinite speed for thermal signals. To overcome this drawback in the Fourier law, we can consider the Cattaneo law (also known as Maxwell-Cattaneo law)
of heat conduction  that differs from the Fourier one by the presence of the relaxation term $q_{t}$ as
\begin{equation*}
\tau _{0}q_{t}+q+\kappa  \nabla\theta =0
\end{equation*}
with $\tau_0>0$, which now models the heat propagation equation as a damped wave equation. This phenomenon is known as \textsl{second sound} effect and it is experimentally observed in materials at a very low temperature, where heat seems to propagate as a thermal wave, which is the reason for this name (see \cite{ABFRS_2012}, \cite{Apalara_et_al} or \cite{Montanaro}, and the review in \cite{Racke_book}). The parameter $\tau_0>0$ is the relaxation time of the heat flux (usually small compared to other parameters) and arises due to the delay in the response of the heat flux to the temperature gradient.

Recently, the authors in \cite{ABFRS_2012} considered equation (\ref{Bose_Problem}) coupled with the Fourier law of heat conduction in bounded domain. Namely, they studied the system
\begin{eqnarray}\label{Rivera_Model}
\left\{
\begin{array}{l}
\tau u_{ttt}+u_{tt}-a^2\Delta u-a^2\beta\Delta u_t+\eta\Delta\theta=0,\vspace{0.2cm}\\
\theta_t-\Delta\theta-\tau\eta\Delta u_{tt}-\eta\Delta u_t=0,
\end{array}
\right.
\end{eqnarray}
where $x\in\Omega$ (a bounded domain of $\R^N$) and $t\in(0,\infty)$ and $\eta\geq 0$ being the coupling coefficient. Observe that when $\eta=0$ we recover the standard linear solid model of viscoelasticity \eqref{Bose_Problem}. The authors proved that under the assumption \eqref{condition} the system (\ref{Rivera_Model}) is well-posed and, also, they showed that the total energy $$E(t)+\Vert\theta(t)\Vert_{L^2(\Omega)}^2$$ decays exponentially, where $E(t)$ is given in (\ref{Energy_Bounded_domain}). Although thermoelasticity and thermoviscoelasticity have been widely treated in the literature, to our knowledge this is the only reference to the standard linear solid model coupled with heat conduction. Previous thermoelastic and thermoviscoelastic models  have been studied in the literature from the point of view of the stability of their solutions. These models are, for instance, the strongly damped wave equation (or wave equation with Kelvin-Voigt damping) with heat conduction, using either the Fourier or the Cattaneo law of heat conduction (see \cite{Misra2014} as a reference for the former, or \cite{Alves2016} or \cite{Racke_2002} as references for the latter). Or also the models of classical thermoelasticity (that is, wave equation coupled with heat conduction), where both Fourier and Cattaneo laws have been used for the heat coupling (see \cite{Carlson}, \cite{LiuZheng} or \cite{Racke_2002}, among many others, as references for the asymptotic stability of these models). Finally, we can find works on the asymptotic stability of thermoelastic plates, with also either Fourier or Cattaneo laws being used (see \cite{MRRackeJDE96} and \cite{RackeUeda}, respectively, or \cite{Said_2013} for a comparison between both approaches).


Our goal in this paper is to consider the equation (\ref{Bose_Problem}) coupled either with the Fourier or the Cattaneo law of heat conduction in $\R^N$. That is, we want to consider the following system:
\begin{eqnarray}\label{Rivera_Model_Cattaneo}
\left\{
\begin{array}{l}
\tau u_{ttt}+u_{tt}-a^{2}\Delta u-a^{2}\beta \Delta u_{t}+\eta\Delta \theta =0,\vspace{0.2cm}\\
\theta _{t}+\gamma \nabla\cdot q -\tau\eta \Delta u_{tt}-\eta \Delta u_{t}=0,\vspace{0.2cm}\\
 \tau_0q_t+q+\kappa \nabla\theta=0,
\end{array}
\right.
\end{eqnarray}%
where $x\in \mathbb{R}^N,\,t\geq 0$, $\tau, \gamma,\kappa,\delta > 0$ and $\tau_0,\eta\geq 0$, for both the Fourier model (i.e., $\tau_0=0$) and the Cattaneo one (i.e., $\tau_0>0$). We show that, when $0<\tau< \beta$ and also when $0<\tau=\beta$, the corresponding systems are well-posed in the appropriate functional setting. Also we  prove their asymptotic stability giving the decay of certain norms related to the norm of the solution of the corresponding Cauchy problems. We compare the obtained decay rates between them and also with the ones of the problem without heat conduction, in order to see how the heat conduction term and its type affects the asymptotic behaviour of the solutions. We would like to remark that in the present paper we  also discuss the case $\tau=\beta$ (not only the case $0<\tau<\beta$ as in our previous work \cite{PellSaid_2016}), both in the Fourier and Cattaneo case, which turns out to be important when we add heat conduction to the standard linear solid model. This is because the only damping in this case comes from the heat conduction. In this paper we see that, while when $\eta=0$ (no heat conduction) the problem for $\tau=\beta$ becomes unstable (to our knowledge this was only proved through numirical method so far).   When $\eta>0$ we are able to show the asymptotic stability both for the Fourier and Cattaneo problems.

The asymptotic stability results we prove in the following sections are the following decay rates. First, we  show the following decay rate for the Fourier model under the condition \eqref{condition}:
\begin{equation*}\label{Main_estimate_Fourier_model}
\left\Vert \nabla^{k}\VF\left( t\right) \right\Vert _{L^{2}(\R^N)}\leq  C(1+t)^{-N/4-k/2}\left\Vert \VF^0 \right\Vert _{L^{1}(\R^N)}+Ce^{-ct}\left\Vert \nabla^{k}\VF^0 \right\Vert _{L^{2}(\R^N)},
\end{equation*}
for any $t\geq 0$ and $0\leq k\leq s$, where $\VF(x,t)=(\tau u_{tt}+u_t,\nabla (\tau u_t+u),\nabla u_t,\theta)^T(x,t)$ and $\VF^0=\VF(x,0)$ (see Theorem \ref{Decay_Second_system}). On the other hand, we investigate the coupling with the Cattaneo law and prove that the decay estimate when \eqref{condition} is:
\begin{equation*}\label{Main_estimate_Cattaneo_law}
\left\Vert \nabla^{k}\VC\left( t\right) \right\Vert _{L^{2}(\R^N)}\leq  C(1+t)^{-N/4-k/2}\left\Vert \VC^0 \right\Vert _{L^{1}(\R^N)}+C(1+t)^{-\ell/2}\left\Vert \nabla^{k+\ell}\VC^0 \right\Vert _{L^{2}(\R^N)},
\end{equation*}
for any $t\geq 0$ and $ 0\leq k+\ell\leq s$, where $\VC(x,t)=(\tau u_{tt}+u_t,\nabla (\tau u_t+u),\nabla u_t,\theta,q)^T(x,t),$ $\VC^0=\VC(x,0)$ (see Theorem \ref{Decay_Second_system_Cattaneo}). Thus, the exponent of the decay rate under the assumption \eqref{condition} is the same for both types of heat coupling and, actually, the same as the one obtained in the Cauchy problem for the model without heat conduction (see Theorem 3.6 in \cite{PellSaid_2016}). In particular, adding the heat conduction does not produce a faster decay rate. The difference between both heat laws when $0<\tau<\beta$ (and also with the model without heat conduction) is that the Cattaneo model exhibits the decay property of regularity-loss type. This phenomenon is well-known in other models (see for instance \cite{HKa06} and \cite{IHK08}), and might be usually expected in the presence of Cattaneo law of heat conduction (see \cite{Said_2013}, for instance).

In the case $\tau=\beta$, we see that the decay rates are slightly different from the previous ones. For the Fourier model we have
\begin{equation*}\label{Main_estimate_Fourier_model_tau=beta}
\left\Vert \nabla^{k}\WF\left( t\right) \right\Vert _{L^{2}(\R^N)}\leq  C(1+t)^{-N/8-k/4}\left\Vert \WF^0 \right\Vert _{L^{1}(\R^N)}+Ce^{-ct}\left\Vert \nabla^{k}\WF^0 \right\Vert _{L^{2}(\R^N)},
\end{equation*}
for any $t\geq 0$ and $0\leq k\leq s$, where $\WF(x,t)=(\tau u_{tt}+u_t,\nabla (\tau u_t+u),\theta)^T(x,t)$ and $\WF^0=\WF(x,0)$ (see Theorem \ref{Decay_Fourier_2}). On the other hand, for the Cattaneo model when $\tau=\beta$ we have:
 \begin{equation*}\label{Main_estimate_Cattaneo_law_tau=beta}
\left\Vert \nabla^{k}\WC\left( t\right) \right\Vert _{L^{2}(\R^N)}\leq  C(1+t)^{-N/8-k/4}\left\Vert \WC^0 \right\Vert _{L^{1}(\R^N)}+C(1+t)^{-\ell/3}\left\Vert \nabla^{k+\ell}\WC^0 \right\Vert _{L^{2}(\R^N)},
\end{equation*}
where $\WC(x,t)=(\tau u_{tt}+u_t,\nabla (\tau u_t+u),\theta,q)^T(x,t)$, $\WC^0=\WC(x,0)$ and $\ell$ is a nonnegative integer such that $k+\ell\leq s$ (see Theorem \ref{thm:CattaneoDecay_tau=beta}). So, roughly speaking, adding the heat conduction to the problem yields asymptotic  stability of the solution in the case $\beta=\tau$, a problem that is not asymptotically stable in the absence of heat conduction (see Sections  \ref{stabFourier_eta0} and \ref{stabFourier_3}). The heat conduction acts as a damping that suffices to obtain the asymptotic stability of the problem, but with decay rates that are slower than the ones obtained in the case $0<\tau<\beta$, which makes sense as when $\tau=\beta$ the heat conduction is our only source of dissipation. Also, and unlike the Fourier model (as before), the Cattaneo model exhibits again the regularity-loss phenomenon, with a loss of regularity apparently higher than in the $0<\tau<\beta$ case. 
In fact it is known for many mathematical models that the dissipation induced by the heat conduction is usually weaker than the frictional  dissipation or viscoelastic dissipation. For instance, similar decay rates have been obtained recently for the Timoshenko system coupled with heat conduction in  \cite{Mori_Kawashima_Fourier} and \cite{Khader_Said_Houri_Asy}.

Another result we see in the present work is the following one. In \cite{PellSaid_2016} we proved that the condition \eqref{condition} was a sufficient condition for the stability of the Cauchy problem for the standard linear model without heat conduction, but we did not prove it was a necessary one. In Section \ref{stabFourier_eta0} we  use the Routh-Hurwitz theorem to complete this stability analysis proving that this condition \eqref{condition} is also a necessary condition for the asymptotic stability of the model \eqref{Rivera_Model_Cattaneo} with $\eta=0$, this result is important since in bounded domain only numerical evidences were presented. See \cite{Conejero} and \cite{KatLasPos_2012}.  
Also the condition  $0<\tau\leq \beta$ is a sufficient (but not necessary) condition for the asymptotic stability for the standard linear model with the Fourier heat coupling (\eqref{Rivera_Model_Cattaneo} with $\eta>0$ and $\tau_0=0$), which completes the stability results given in Sections \ref{stabFourier_1} and \ref{stabFourier_3}. The same techniques do not give any information for the Cattaneo case.

Finally, to see the optimality of the previous decay and regularity results, we compute the asymptotic expansions of the real parts of the eigenvalues associated to the Fourier transform of the solution, both when the Fourier parameter $|\xi|$ tends to 0 and to $\infty$. Roughly speaking, the behavior of the real part when $|\xi|\to 0$ determines the decay rate of the solution, while the behaviour of the real parts for large frequencies is responsible for the required regularity of the initial data for the decay to be fulfilled (see Remark 4.4 in \cite{PellSaid_2016} for more details). We see that in all cases except the last one (Cattaneo when $\tau=\beta$) the decay rates are optimal since the asymptotic behaviour of the solutions (which is expected from the asymptotic expansion of the eigenvalues) agrees with the one given by the previous decay results (see Remarks \ref{rmk:expansion1}, \ref{rmk:expansion2}, \ref{rmk:expansion3} and \ref{rmk:decayCatt_tau=beta}).

To summarize, the main interest of this work relies on the following results. First, adding the heat conduction does not improve the decay in the case $0<\tau<\beta$. It may even destroy the nice property of the solution as in the Cattaneo case (regularity loss phenomenon). Second, for $\tau=\beta$, adding the heat conduction provides a stability to the solution which is (as we prove) unstable in the absence of the heat  conduction. Finally, there is a significant difference between the Fourier law and the Cattaneo one (in terms of the regularity loss). Such difference cannot be seen when $\tau=0$. This makes  the limit problem $\tau\to 0$ very interesting.

This paper is organized as follows. In Section \ref{Section_Fourier}, we investigate the Fourier-law model, while Section \ref{Section_Cattaneo} is devoted to the analysis of the Cattaneo-law  model. More concretely, in Section \ref{Well-posed-Fourier} we show the well-posedness of the Fourier model, and in Sections \ref{stabFourier_1} and \ref{stabFourier_3} we prove the stability results for the Fourier model when $\tau<\beta$ and $\tau=\beta$, respectively,  and discuss the optimality of the corresponding decay rates comparing them with the one expected from the asymptotic expansion of the eigenvalues. In Section \ref{stabFourier_eta0} we  see that, indeed, $0<\tau<\beta$ is a necessary and sufficient condition for the stability of \eqref{Rivera_Model_Cattaneo} when $\eta=0$ (no heat coupling), a result which represents a novelty for this problem. We  also use the same techniques to discuss wether this condition is also necessary when $\eta>0$. In Section \ref{Section_Cattaneo} we show the well-posedness for the Cattaneo model, and  Sections \ref{stabCattaneo_1} and \ref{stabCattaneo_2} are devoted to the stability results for the Cattaneo model when when $\tau<\beta$ and $\tau=\beta$, respectively.


\section{The Fourier-law model}\label{Section_Fourier}
In this section, we consider the standard linear model of viscoelasticity coupled with heat conduction of Fourier type. That is, we consider the  system \eqref{Rivera_Model_Cattaneo} with $\tau_0=0$:
\begin{subequations}
\begin{equation}\label{eq:Fourier}
\left\{\begin{array}{l}
\tau u_{ttt}+u_{tt}-a^{2}\Delta u-a^{2}\beta \Delta u_{t}+\eta \Delta \theta =0,\vspace{0.2cm}\\
\theta _{t}-\gamma\kappa\Delta \theta-\tau\eta\Delta  u_{tt}-\eta\Delta u_{t}=0,
\end{array}\right.
\end{equation}%
where $x\in \mathbb{R}^N,\,t\geq 0$, with the following initial data
\begin{equation}
u\left( x,0\right) =u_{0}\left( x\right) ,\quad u_{t}\left( x,0\right)
=u_{1}\left( x\right) ,\quad u_{tt}\left( x,0\right) =u_{2}\left( x\right)
,\quad \theta \left( x,0\right) =\theta _{0}\left( x\right).
\label{Initial_data}
\end{equation}
\label{Main_system}
\end{subequations}

Without loss of generality we will consider $a=1$ and $\gamma=\kappa=1$ for the rest of this section. First (in Section \ref{Well-posed-Fourier}), we are giving the appropriate functional setting to prove the well-posedness of this problem. Second (Sections \ref{stabFourier_1}, \ref{stabFourier_eta0} and \ref{stabFourier_3}) we show stability results of the solution of \eqref{Main_system}, discussing the cases $0<\tau<\beta$ and $\tau=\beta$, and  $\eta=0$ and $\eta>0$,  separately. 

\subsection{Functional setting and well-posedness of the Fourier-law model}\label{Well-posed-Fourier}

We are going to follow and adapt to the present problem the ideas for the functional setting and the well-posedness in \cite{ABFRS_2012} and in \cite{PellSaid_2016}. We are going to see that this is a well-posed problem when $0<\tau\leq \beta$.

First, we need to write problem \eqref{Main_system} as a first-order evolution equation. We take $v=u_t$ and $w=u_{tt}$ and we rewrite the previous problem as:

\begin{equation}\label{eq:ev_equation_F}
\left\{
\begin{array}{l}
\dfrac{d}{dt}U(t) = \AF U(t) ,\ \ t\in[0,+\infty)\vspace{0.2cm} \\
U(0)=U_0\end{array}
\right.
\end{equation}
where $U(t)=(u,v,w,\theta)^T$, $U_0=(u_0,u_1,u_2,\theta_0)^T$ and $\AF:\D(\AF)\subset \cH_{\mathrm{F}}\longrightarrow \cH_{\mathrm{F}}$ is the following linear operator
\begin{equation*}
\AF \left(
\begin{array}{c}
u \\
v \\
w \\
\theta
\end{array}%
\right)=
\left(
\begin{array}{c}
v \\
w \\
\frac{1}{\tau}\Delta(u+\beta v-\eta \theta)-\frac{1}{\tau}w \\
\Delta(\eta v+\tau\eta w+\theta )
\end{array}%
\right).
\end{equation*}
Following \cite{ABFRS_2012} and \cite{PellSaid_2016}, we introduce the energy space
$$\cH_{\mathrm{F}}= H^1(\R^N)\times H^1(\R^N)\times L^2(\R^N)\times L^2(\R^N)$$
with the following inner product
\begin{equation*}
\begin{array}{ll}
\langle (u,v,w,\theta),\,(u_1,v_1,w_1,\theta_1)  \rangle_{\cH_{\mathrm{F}}} = & \tau(\beta-\tau) \displaystyle\int_{\mathbb{R}^N}\nabla v\cdot\nabla \overline{v}_1  \,dx +\int_{\mathbb{R}^N} \nabla(u+\tau v)\cdot \nabla(\overline{u}_1+\tau \overline{v}_1)\,dx  \vspace{0.2cm}\\
& + \,\displaystyle\int_{\mathbb{R}^N} (v+\tau w) (\overline{v}_1+\tau \overline{w}_1) \,dx
 \, + \,\displaystyle\int_{\mathbb{R}^N} (u+\tau v) (\overline{u}_1+\tau \overline{v}_1) \,dx  \vspace{0.2cm}\\
&  + \,\displaystyle\int_{\mathbb{R}^N} v \overline{v}_1 \,dx + \,\displaystyle\int_{\mathbb{R}^N} \theta \overline{\theta}_1 \,dx
\end{array}
\end{equation*}
and the corresponding norm
\begin{equation}\label{normF}
\begin{array}{ll}
\left\| (u,v,w,\theta)\right\|^2_{\cH_{\mathrm{F}}} = & \tau(\beta-\tau) \|\nabla v\|^2_{L^2(\R^N)}  + \|\nabla(u+\tau v)\|^2_{L^2(\R^N)} \vspace{0.2cm} \\
& +  \|v+\tau w\|^2_{L^2(\R^N)} + \|u+\tau v\|^2_{L^2(\R^N)} + \|v\|^2_{L^2(\R^N)}+ \|\theta\|^2_{L^2(\R^N)}.
\end{array}
\end{equation}

\begin{remark}\label{rem:equivnorms}
It can be seen that this  new norm is equivalent to the usual one in $\cH_{\mathrm{F}}= H^1(\R^N)\times H^1(\R^N)\times L^2(\R^N)\times L^2(\R^N)$. Actually, and as we said in \cite{PellSaid_2016}, observe that this new norm is slightly different from the one introduced in \cite{ABFRS_2012}: as in unbounded domains we do not have the Poincar\'e inequality, the new terms are necessary so that the new norm \eqref{normF} is equivalent to the usual one in $\cH_{\mathrm{F}}$. 
\end{remark}

We consider \eqref{eq:ev_equation_F} in the Hilbert space $\cH_{\mathrm{F}}$ with domain
\begin{equation*}\label{eq:domain_F}
\D(\AF) =\left\{ (u,v,w,\theta)\in \cH_{\mathrm{F}}; w,\theta\in H^1(\R^N) \textrm{ and } u+\beta v-\eta\theta,\, \eta v+\tau\eta w+\theta \in H^2(\R^N) \right\}.
\end{equation*}

\begin{theorem}\label{thm:semigroup_F}
Under the dissipative condition $0<\tau\leq\beta$, the operator $\AF$  is the generator of a $C_0$-semigroup of contractions on $\cH_{\mathrm{F}}$. In particular, for any $U_0\in \D(\AF)$, there exists a unique function $U\in C^1([0,+\infty);\cH_{\mathrm{F}})\cap  C([0,+\infty);\D(\AF))$ satisfying \eqref{eq:ev_equation_F}.
\end{theorem}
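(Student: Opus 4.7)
The plan is to apply the Lumer–Phillips theorem, verifying three conditions: (i) $\D(\AF)$ is dense in $\cH_{\mathrm{F}}$; (ii) $\AF$ is dissipative with respect to the inner product of the excerpt; and (iii) $I-\AF$ is surjective onto $\cH_{\mathrm{F}}$. Density is immediate, since $\bigl(C_c^\infty(\R^N)\bigr)^4\subset\D(\AF)$ and $C_c^\infty(\R^N)$ is dense in $H^1(\R^N)$ and in $L^2(\R^N)$.

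For dissipativity, substitute the explicit form of $\AF U$ into the six summands of the inner product; the third summand becomes $\int\Delta(u+\beta v-\eta\theta)(\bar v+\tau\bar w)\,dx$ and the sixth becomes $\int\Delta(\eta v+\tau\eta w+\theta)\bar\theta\,dx$, and both are integrated by parts. Taking real parts, every higher-order cross-term cancels: $\Re\int\nabla v\cdot\nabla\bar u$ from the second summand cancels $-\Re\int\nabla u\cdot\nabla\bar v$ from the third by conjugate symmetry of $\int\nabla\cdot\nabla\bar{\phantom{a}}\,dx$, and similarly for the $(\nabla u,\nabla w)$ pair; the $(\nabla v,\nabla w)$ contributions vanish thanks to the algebraic identity $\tau(\beta-\tau)+\tau^2-\beta\tau=0$ wired into the coefficients of the first three summands; and the $\eta$-couplings between the third and sixth summands cancel because the coefficients $\eta$ and $\tau\eta$ in the PDE for $\theta$ are matched to those appearing in $A_3$. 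What survives at top order is
\begin{equation*}
\Re\,\langle \AF U,U\rangle_{\cH_{\mathrm{F}}} \;=\; -(\beta-\tau)\,\|\nabla v\|^2_{L^2(\R^N)} \;-\; \|\nabla\theta\|^2_{L^2(\R^N)} \;+\; R(U),
\end{equation*}
where $R(U)$ collects the lower-order remainders from the fourth and fifth summands and satisfies $|R(U)|\le C\|U\|^2_{\cH_{\mathrm{F}}}$. Under $0<\tau\le\beta$ the top-order part is nonpositive, and the remainder is absorbed into the contraction property via the usual equivalent renorming of $\cH_{\mathrm{F}}$ (equivalently, by applying Lumer–Phillips to $\AF-\omega I$ for $\omega\geq C$ and then translating back).

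For surjectivity, given $F=(f_1,f_2,f_3,f_4)\in\cH_{\mathrm{F}}$, the first two components of $U-\AF U=F$ give $v=u-f_1$ and $w=u-f_1-f_2$, reducing the system to a coupled elliptic problem for $(u,\theta)\in H^1(\R^N)\times H^1(\R^N)$ of schematic form
\begin{equation*}
\begin{aligned}
(1+\tau)u \;-\; \Delta\bigl((1+\beta)u-\eta\theta\bigr) &= g_1\in H^{-1}(\R^N),\\
\theta \;-\; \Delta\bigl((1+\tau)\eta\,u+\theta\bigr) &= g_2\in H^{-1}(\R^N),
\end{aligned}
\end{equation*}
with explicit $g_i$. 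The associated sesquilinear form $a((u,\theta),(\varphi,\psi))$ on $H^1\times H^1$ is not symmetric, but testing against $(\varphi,\psi)=\bigl(u,\tfrac{1}{1+\tau}\theta\bigr)$ makes the off-diagonal $\eta$-terms cancel in the real part—this asymmetric scaling works precisely because the $\eta$-couplings in the two equations carry coefficients $-\eta$ and $+(1+\tau)\eta$, so $(1+\tau)\cdot\tfrac{1}{1+\tau}\eta=\eta$—and yields coercivity
\begin{equation*}
\Re\,a\bigl((u,\theta),\bigl(u,\tfrac{1}{1+\tau}\theta\bigr)\bigr) \;\geq\; c\bigl(\|u\|^2_{H^1(\R^N)}+\|\theta\|^2_{H^1(\R^N)}\bigr).
\end{equation*}
Lax–Milgram (after the harmless rescaling of the test function, or equivalently via the Lions variant for nonsymmetric forms) delivers a unique weak $(u,\theta)$, and standard elliptic regularity for $-\Delta+I$ upgrades $u+\beta v-\eta\theta$ and $\eta v+\tau\eta w+\theta$ to $H^2(\R^N)$, placing $U$ in $\D(\AF)$.

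The main obstacle I anticipate is the coercivity of the resolvent system on $\R^N$, where the absence of Poincaré's inequality is precisely the reason the norm \eqref{normF} carries the extra zero-order pieces $\|u+\tau v\|^2$ and $\|v\|^2$ absent from the bounded-domain energy \eqref{Energy_Bounded_domain} (cf.\ Remark \ref{rem:equivnorms}); without them, the energy norm would fail to control the $L^2$-parts of $u$ and $w$ and the resolvent estimate would degrade. The dissipativity bookkeeping is mechanical but error-prone in tracking the many gradient cross-terms produced by the various summands and the $\eta$-coupling.
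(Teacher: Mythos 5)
Your proof is correct and rests on the same two pillars as the paper's argument --- the Lumer--Phillips theorem for generation and a Lax--Milgram resolvent construction --- but both key technical steps are carried out by genuinely different devices. For dissipativity, the paper does not shift by $\omega I$: it perturbs $\AF$ by the specific bounded operator $B_{\mathrm{F}}$ (adding $-\frac{1}{\tau}u - v - \frac{1}{\tau^2}v$ to the third component), chosen so that the indefinite zero-order terms you collect in $R(U)$, namely $\Re\int(v+\tau w)\overline{(u+\tau v)}\,dx+\Re\int w\bar v\,dx$, cancel \emph{exactly}, leaving $\Re\langle \AbF U,U\rangle_{\cH_{\mathrm{F}}}=-(\beta-\tau)\|\nabla v\|^2-\frac{1}{\tau}\|v\|^2-\|\nabla\theta\|^2\le 0$; your scalar shift achieves monotonicity with less bookkeeping but is cruder. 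For surjectivity, the paper solves $(\lambda I-\AbF)U=F$ with the particular $\lambda_*=\frac{-1+\sqrt{1+4\tau}}{2\tau}$, the positive root of $\tau\lambda^2+\lambda-1=0$, precisely so that the off-diagonal term $\eta(\tau\lambda^2+\lambda-1)\int\nabla u\cdot\nabla\theta\,dx$ in the quadratic form vanishes and coercivity is read off the diagonal; your asymmetric test pair $(u,\tfrac{1}{1+\tau}\theta)$ kills the same cross term by conjugate antisymmetry at $\lambda=1$, and has the advantage of working for every $\lambda>0$. Two caveats, the first of which you should repair. (i) Lumer--Phillips applied to $\AF-\omega I$ requires the range condition for $(\mu+\omega)I-\AF$ with some $\mu>0$, i.e.\ surjectivity of $\lambda I-\AF$ for some $\lambda>\omega$; you only prove it for $\lambda=1$, which suffices only if $\omega<1$. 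Your Lax--Milgram argument does extend verbatim to arbitrary $\lambda>0$ (the rescaling factor becomes $\tfrac{1}{\lambda(1+\tau\lambda)}$), so this is a one-line fix, but it must be said. (ii) The phrase ``absorbed into the contraction property via the usual equivalent renorming'' is not right: a shift yields only the quasi-contractive bound $\|e^{t\AF}\|\le e^{\omega t}$, and no equivalent renorming removes exponential growth. The paper's route has the same weakness --- a bounded perturbation of a generator of contractions is a generator of a $C_0$-semigroup, not necessarily of contractions --- so strictly speaking neither argument delivers the ``of contractions'' claim for $\AF$ itself, only generation; you are not worse off than the paper here, but you should not pretend the renorming closes this.
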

\begin{pf}
As in \cite{PellSaid_2016}, we are going to consider the perturbed problem
  \begin{equation*}\label{eq:perturbed_equation}
\left\{
\begin{array}{l}
\dfrac{d}{dt}U(t) = \AbF U(t) ,\ \ t\in[0,+\infty)\vspace{0.2cm} \\
U(0)=U_0\end{array}
\right.
\end{equation*}
where
$$\AbF \left(
\begin{array}{c}
u \\
v \\
w \\
\theta
\end{array}\right) =
(\AF +B_{\mathrm{F}}) \left(
\begin{array}{c}
u \\
v \\
w \\
\theta
\end{array}\right) =
\left(
\begin{array}{c}
v \\
w \\
\frac{1}{\tau}\Delta(u+\beta v-\eta \theta)-\frac{1}{\tau}w- \frac{1}{\tau} u - v - \frac{1}{\tau^2} v \\
\Delta(\eta v+\tau\eta w +\theta)
\end{array}
\right).$$

We are going to prove that, if $0<\tau\leq \beta$, $\AbF$ is a maximal monotone operator in $\cH_{\mathrm{F}}$. Using the Lummer-Phillips theorem (see, for instance, Theorem 4.3 in Chapter 1 of \cite{Pazy}), this allows us to say that,  $\AbF$ is the generator of a $C_0$-semigroup of contractions on $\cH_{\mathrm{F}}$. As $\AbF$ is a bounded perturbation of $\AF$ in $\cH_{\mathrm{F}}$, we can say that $\AF$ is also the generator of a $C_0$-semigroup of contractions on $\cH_{\mathrm{F}}$ when $0<\tau\leq \beta$ (see, for instance, Theorem 1.1 in Chapter 3 of \cite{Pazy}).

To prove this, observe first that, following the same steps as in the proof of Proposition 2.1 in \cite{ABFRS_2012} and using our new inner product, we can see that, for any $U\in \D(\AbF)$,
$$\Re \langle \AbF U,U \rangle_{\cH_{\mathrm{F}}} = -(\beta-\tau) \displaystyle\int_{\mathbb{R}^N}|\nabla v|^2\, dx   -\frac{1}{\tau} \displaystyle\int_{\mathbb{R}^N}| v|^2\, dx - \displaystyle\int_{\mathbb{R}^N}| \nabla \theta|^2\, dx\leq 0$$
since $0<\tau\leq \beta$. Hence, the operator $\AbF$ is dissipative when $0<\tau\leq \beta$.

Now, following the same steps as in the proof of Proposition 2.2 in \cite{ABFRS_2012} and Theorem 2.2 in \cite{PellSaid_2016}, we can see that $\mathcal{R}(\lambda Id-\AbF)=\cH_{\mathrm{F}}$ for $\lambda=\dfrac{-1+\sqrt{1+4\tau}}{2\tau}>0$. That is, $\AbF$ is maximal in $\cH_{\mathrm{F}}$. As the proof of this fact is the same as the proof of the surjectivity in the Cattaneo-law model but taking $\tau_0=0$, we refer the reader to the Section \ref{Well-posed-Cattaneo} for the details. 

As we said, since $\AbF$ is maximal monotone when $0<\tau\leq \beta$, we can conclude that $\AbF$ (and, hence, $\AF$) generates a $C_0$ semigroup of contractions in $\cH_{\mathrm{F}}$. The regularity of the solution follows using \cite{Brezis} or \cite{Pazy}.
\end{pf}

\subsection{Stability results: the case $0<\tau<\beta$.}\label{stabFourier_1}
We show some stability results of a norm related with the solution of \eqref{Main_system}. We discuss the two cases separatedly: $0<\tau<\beta$ in the present section and $0<\beta=\tau$ in Section \ref{stabFourier_3}. In this section we show that the decay rate of the norm related to the solution of \eqref{Main_system} is $(1+t)^{-N/4}$.

Taking the Fourier transform of \eqref{Main_system} we obtain the following ODE initial value problem
\begin{subequations}
\begin{equation}\label{Eq_Fourier}
\left\{\begin{array}{l}
\tau \hat{u}_{ttt}+\hat{u}_{tt}+|\xi|^2 \hat{u}+\beta |\xi|^2 \hat{u}_{t}-\eta |\xi|^2 \hat{\theta} =0,\vspace{0.2cm}\\
\hat{\theta} _{t}+|\xi|^2 \hat{\theta}+\tau\eta|\xi|^2  \hat{u}_{tt}+\eta|\xi|^2 \hat{u}_{t}=0,
\end{array}\right.
\end{equation}%
with
\begin{equation}
\hat{u}\left( \xi,0\right) =\hat{u}_{0}\left( \xi\right) ,\quad \hat{u}_{t}\left( \xi,0\right)
=\hat{u}_{1}\left( \xi\right) ,\quad \hat{u}_{tt}\left( \xi,0\right) =\hat{u}_{2}\left( \xi\right)
,\quad \hat{\theta} \left( \xi,0\right) =\hat{\theta} _{0}\left( \xi\right)
\end{equation}%
\label{Main_system_Fourier}
\end{subequations}
where $\xi\in\R^N$. Recall that
\begin{equation*}\label{Change_Variables}
\hat{v}=\hat{u}_t,\qquad \hat{w}= \hat{u}_{tt}.
\end{equation*}%
Thus, system (\ref{Main_system_Fourier}) becomes%


\begin{subequations}
\begin{eqnarray}
&&\hat{u}_{t}-\hat{v}=0, \label{First_equation_system_Fourier}\\
&&\hat{v}_{t}-\hat{w}=0,
 \label{Second_equation_system_Fourier}\\
&&\tau\hat{w}_{t}+\hat{w}+|\xi|^2\hat{u}+\beta |\xi|^2\hat{v}-\eta|\xi|^2\hat{\theta}=0,  \label{Third_equation_system_Fourier} \\
&&\hat{\theta} _{t}+|\xi|^2\hat{\theta}+\eta\tau|\xi|^2 \hat{w}+\eta |\xi|^2\hat{v}=0,  \label{Fourth_equation_system_Fourier}
\end{eqnarray}%
with the initial data
\begin{eqnarray}
\hat{u}\left( \xi,0\right)  =\hat{u}_{0}\left( \xi\right),\quad \hat{v}\left( \xi,0\right) =v_{0}\left( \xi\right),\quad
\hat{w}\left( \xi,0\right)  =\hat{w}_{0}\left( \xi\right),\quad \hat{\theta}\left( \xi,0\right)=\hat{\theta}(\xi).
\label{Initial_condition_system_Fourier}
\end{eqnarray}
\label{System_New_2}
\end{subequations}

We can write system \eqref{System_New_2} as the following first order ODE system in the matrix form
\begin{equation}\label{ODE_Fourier}
  \hat{U}_t=\Psi(\xi)\hat{U},\quad \text{with}\quad  \Psi(\xi)=L+|\xi|^2 A,
\end{equation}
where $\hat{U}(\xi,t)=(\hat{u}(\xi,t),\hat{v}(\xi,t),\hat{w}(\xi,t),\hat{\theta}(\xi,t))^T$, and
$$L=\left(
  \begin{array}{cccc}
    0 & 1 & 0 & 0 \\
    0 & 0 & 1 & 0 \\
    0 & 0 & -\frac{1}{\tau} & 0 \\
    0 & 0 & 0 & 0 \\
  \end{array}
\right) ,\qquad
A=\left(
  \begin{array}{cccc}
    0 & 0 & 0 & 0 \\
    0 & 0 & 0 & 0 \\
    -\frac{1}{\tau} & -\frac{\beta}{\tau} & 0 & \frac{\eta}{\tau} \\
    0 & -\eta & -\tau\eta & -1 \\
  \end{array}
\right).$$

\subsubsection{Lyapunov functional}\label{sec:FourierFunctional}
In this section, we apply the energy method in the Fourier space in order to find the decay rate of the $L^2$-norm of the vector $\VF(x,t)=(\tau u_{tt}+u_t,\nabla (\tau u_t+u),\nabla u_t,\theta)^T(x,t)$, where $(u(x,t),\theta(x,t))$ is the solution of \eqref{Main_system}. We  follow the same ideas of  \cite[Section 3]{PellSaid_2016}.

The main result of the present section is the decay for this $L^2$-norm of $\VF$, given in the following theorem. 
\begin{theorem} \label{Decay_Second_system}
Assume that $0<\tau<\beta $. Let $\VF(x,t)=(\tau u_{tt}+u_t,\nabla (\tau u_t+u),\nabla u_t,\theta)^T(x,t)$, where $(u,\theta)$ is the solution of (\ref{Main_system}). Let $s$ be a nonnegative integer and let $\VF^{0}=\VF(x,0)\in
H^{s}\left(
\mathbb{R}^N
\right) \cap L^{1}\left(
\mathbb{R}^N
\right) .$ Then, the following decay estimate
\begin{equation}\label{Main_estimate_Theorem_1}
\left\Vert \nabla^{k}\VF\left( t\right) \right\Vert _{L^{2}(\R^N)}\leq  C(1+t)^{-N/4-k/2}\left\Vert \VF^0 \right\Vert _{L^{1}(\R^N)}+Ce^{-ct}\left\Vert \nabla^{k}\VF^0 \right\Vert _{L^{2}(\R^N)}
\end{equation}
holds, for any $t\geq 0$ and $0\leq k\leq s$, where $C$ and $c$ are two positive constants independent of $t$ and $\VF^0$.
\end{theorem}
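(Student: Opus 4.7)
My approach would be the energy method in the Fourier space, following the strategy of \cite{PellSaid_2016} and adapting the multipliers so as to absorb the thermal coupling. The Fourier transform turns \eqref{Main_system} into the ODE system \eqref{System_New_2}, and mimicking the norm \eqref{normF} I would introduce the basic energy
\[\hat E(\xi,t):=\tfrac{1}{2}\Bigl(|\tau\hat w+\hat v|^2+|\xi|^2|\tau\hat v+\hat u|^2+\tau(\beta-\tau)|\xi|^2|\hat v|^2+|\hat\theta|^2\Bigr),\]
which, for $0<\tau<\beta$, is pointwise comparable to $|\widehat{\VF}(\xi,t)|^2$. Multiplying \eqref{Third_equation_system_Fourier} by $\overline{\tau\hat w+\hat v}$, \eqref{Fourth_equation_system_Fourier} by $\overline{\hat\theta}$, and combining with \eqref{First_equation_system_Fourier}--\eqref{Second_equation_system_Fourier}, the $\eta$-cross terms cancel exactly (this is the Fourier-side analogue of the dissipation identity in the proof of Theorem \ref{thm:semigroup_F}) and one obtains
\[\tfrac{d}{dt}\hat E(\xi,t)=-(\beta-\tau)|\xi|^2|\hat v|^2-|\xi|^2|\hat\theta|^2.\]
Thus $\hat v$ and $\hat\theta$ are dissipated at rate $|\xi|^2$, while $\tau\hat w+\hat v$ and $\tau\hat v+\hat u$ still require additional control.

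To recover dissipation of these remaining components I would introduce the auxiliary functionals
\[\hat F_1:=\Re\bigl((\tau\hat w+\hat v)\overline{\hat v}\bigr),\qquad \hat F_2:=-\Re\bigl(\hat v\,\overline{\tau\hat v+\hat u}\bigr),\qquad \hat F_3:=\Re\bigl(\hat\theta\,\overline{\tau\hat w+\hat v}\bigr).\]
Differentiating $\hat F_1$ along \eqref{System_New_2} produces $-\tau|\hat w|^2$ up to terms absorbable by Young's inequality into the existing dissipation; $\hat F_2$ produces $-|\xi|^2|\hat u|^2$ modulo similar errors; and $\hat F_3$ is designed to cancel the residual $\eta$-cross-terms between the mechanical and thermal variables. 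The Lyapunov functional
\[\hat L(\xi,t):=N(1+|\xi|^2)\hat E(\xi,t)+\epsilon_1\hat F_1(\xi,t)+\epsilon_2\hat F_2(\xi,t)+\epsilon_3\hat F_3(\xi,t),\]
with constants chosen hierarchically ($\epsilon_3\ll\epsilon_1,\epsilon_2\ll 1\ll N$), will then satisfy both $\hat L(\xi,t)\sim(1+|\xi|^2)\hat E(\xi,t)$ and $\tfrac{d}{dt}\hat L(\xi,t)\le-c|\xi|^2\hat E(\xi,t)$. Combining the two yields the pointwise differential inequality $\tfrac{d}{dt}\hat L+c\rho(\xi)\hat L\le 0$ with $\rho(\xi):=|\xi|^2/(1+|\xi|^2)$, and Gronwall produces
\[|\widehat{\VF}(\xi,t)|^2\le Ce^{-c\rho(\xi)t}|\widehat{\VF^0}(\xi)|^2.\]

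Transferring to physical space is then standard: by Plancherel,
\[\|\nabla^k \VF(t)\|_{L^2(\R^N)}^2\le C\int_{\R^N}|\xi|^{2k}e^{-c\rho(\xi)t}|\widehat{\VF^0}(\xi)|^2\,d\xi,\]
and splitting at $|\xi|=1$ produces the two terms of \eqref{Main_estimate_Theorem_1}. On $\{|\xi|\le 1\}$ one has $\rho(\xi)\ge|\xi|^2/2$ and $|\widehat{\VF^0}(\xi)|\le\|\VF^0\|_{L^1(\R^N)}$, so the classical estimate $\int_{|\xi|\le 1}|\xi|^{2k}e^{-c|\xi|^2 t/2}d\xi\le C(1+t)^{-N/2-k}$ delivers the $L^1$-part; on $\{|\xi|\ge 1\}$ one has $\rho(\xi)\ge 1/2$, so the exponential factor $e^{-ct/2}$ comes out of the integral and the remainder is bounded by $\|\nabla^k\VF^0\|_{L^2(\R^N)}^2$. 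The main obstacle is the design of the auxiliary functionals $\hat F_i$ and the hierarchical tuning of $N,\epsilon_1,\epsilon_2,\epsilon_3$ so that the $\eta$-cross-terms generated by the wave/heat coupling in \eqref{Main_system_Fourier} are absorbed without destroying either the equivalence $\hat L\sim(1+|\xi|^2)|\widehat{\VF}|^2$ or the uniform dissipation rate $\rho(\xi)$. The strict inequality $\tau<\beta$ is crucial already at the first step, since it keeps $(\beta-\tau)|\xi|^2|\hat v|^2$ as a genuine dissipative term; the degeneration of this coefficient when $\tau=\beta$ is precisely what will force the slower decay rate treated in Section \ref{stabFourier_3}.
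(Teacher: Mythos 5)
Your overall architecture is exactly the paper's: the energy identity $\tfrac{d}{dt}\hat E=-(\beta-\tau)|\xi|^2|\hat v|^2-|\xi|^2|\hat\theta|^2$, cross-term functionals to recover dissipation of the remaining components, a weighted Lyapunov combination giving $e^{-c\rho(\xi)t}$ with $\rho(\xi)=|\xi|^2/(1+|\xi|^2)$, and Plancherel plus the splitting at $|\xi|=1$. The final transfer to physical space is correct as written. However, there is a concrete gap in the design of the auxiliary functionals. Your $\hat F_2=-\Re\bigl(\hat v\,\overline{\tau\hat v+\hat u}\bigr)$ cannot produce $-|\xi|^2|\hat u|^2$: differentiating it along \eqref{System_New_2} uses only $\hat v_t=\hat w$ and $(\hat u+\tau\hat v)_t=\hat v+\tau\hat w$, so $\tfrac{d}{dt}\hat F_2=-\Re\bigl(\hat w\,\overline{\tau\hat v+\hat u}\bigr)-\Re\bigl(\hat v\,\overline{\hat v+\tau\hat w}\bigr)$ contains no $|\xi|^2$ at all. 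To extract the crucial term $-|\xi|^2|\hat u+\tau\hat v|^2$ you must pair $\hat u+\tau\hat v$ with its time derivative $\hat v+\tau\hat w$, i.e. take $\F_1=\Re\bigl((\bar{\hat u}+\tau\bar{\hat v})(\hat v+\tau\hat w)\bigr)$ as in Lemma \ref{Lemma_F_1}, so that the equation $(\hat v+\tau\hat w)_t=-|\xi|^2(\hat u+\tau\hat v)+(\tau-\beta)|\xi|^2\hat v+\eta|\xi|^2\hat\theta$ enters. (Your $\hat F_1$ also has the wrong sign: as written its derivative yields $+\tau|\hat w|^2$, not $-\tau|\hat w|^2$; and the $\hat F_3$ you add is unnecessary here, since in the Fourier-law case the $\eta$-cross terms are absorbed by Young's inequality against the $-|\xi|^2|\hat\theta|^2$ dissipation already present — a functional of $\F_3$-type is only needed when $\tau=\beta$.)

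The second defect is the normalization $\hat L=N(1+|\xi|^2)\hat E+\sum\epsilon_i\hat F_i$ with unweighted cross terms. At low frequencies the equivalence $\hat L\sim(1+|\xi|^2)\hat E$ fails, because $|\hat F_1|,|\hat F_2|$ involve $|\hat v|^2$ and $|\hat u+\tau\hat v|^2$, which are only controlled by $|\xi|^{-2}\hat E$; at high frequencies the claimed inequality $\tfrac{d}{dt}\hat L\le-c|\xi|^2\hat E$ fails, because $|\xi|^2\hat E$ contains $|\xi|^4|\hat u+\tau\hat v|^2$ while the cross terms can only deliver $|\xi|^2|\hat u+\tau\hat v|^2$. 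The paper's remedy, which you should adopt, is to keep $\hat E$ unweighted and multiply the cross-term functionals by $|\xi|^2/(1+|\xi|^2)$, as in \eqref{Lyapunov}; this yields $\LF\sim\hat E$ together with $\tfrac{d}{dt}\LF+\gamma\,\tfrac{|\xi|^2}{1+|\xi|^2}\,\hat E\le 0$, and Gronwall then gives the pointwise bound of Proposition \ref{Main_Lemma}. With these two corrections your argument closes and coincides with the paper's proof.
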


\begin{remark}
Observe that, as we said before, this decay rate is the same one obtained in \cite{PellSaid_2016} for $V(x,t)=(\tau u_{tt}+u_t,\nabla (\tau u_t+u),\nabla u_t)^T(x,t)$, where $u(x,t)$ is the solution of the Cauchy problem for the case $\eta=0$ (no heat conduction) when $0<\tau<\beta$. Hence, the heat coupling under the Fourier law does not change the decay, as one may expect, or change the regularity of this norm related to the solution in this case. 
\end{remark}
To prove the previous theorem, we need to obtain the following pointwise estimate for the Fourier image of $\VF$.
\begin{proposition}\label{Main_Lemma}
Assume that $0<\tau<\beta$ and let  $({u},{\theta})$ be the solution of \eqref{Main_system}. Then for $\VF(x,t)=(\tau u_{tt}+u_t,\nabla (\tau u_t+u),\nabla u_t,\theta)^T(x,t)$ and for all $t\geq 0$, we have
\begin{equation}\label{U_Inequality}
|\hat{\VF}(\xi,t)|^2\leq Ce^{-c\rF(\xi)t}|\hat{\VF}(\xi,0)|^2,
\end{equation}
where $$\rF(\xi)=\dfrac{|\xi|^2}{(1+|\xi|^2)}$$
and $C$ and $c$ are two positive constants.
\end{proposition}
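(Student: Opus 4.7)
The plan is to construct a Lyapunov functional $F(\xi,t)$ in Fourier space that is pointwise equivalent to $|\hat{\VF}(\xi,t)|^2$ and that satisfies a differential inequality of the form
$$\frac{d}{dt} F(\xi,t) + c\,\rF(\xi)\, F(\xi,t) \leq 0,$$
for some $c>0$ independent of $\xi$. Gronwall's inequality then yields \eqref{U_Inequality} after noting $F(\xi,t)\sim |\hat{\VF}(\xi,t)|^2$.

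First, I would take as starting point the natural energy associated with the norm \eqref{normF}, rewritten in Fourier variables,
$$E_0(\xi,t) = \tau(\beta-\tau)|\xi|^2|\hat v|^2 + |\xi|^2|\hat u + \tau \hat v|^2 + |\hat v + \tau \hat w|^2 + |\hat\theta|^2 + (\text{lower-order terms}),$$
and compute $\frac{d}{dt}E_0$ using \eqref{First_equation_system_Fourier}--\eqref{Fourth_equation_system_Fourier}. By the same cancellation that underlies the dissipativity estimate in the proof of Theorem \ref{thm:semigroup_F}, now carried out pointwise in $\xi$, the time derivative reduces to
$$\frac{d}{dt}E_0 \leq - c_0\bigl( (\beta-\tau) |\xi|^2 |\hat v|^2 + |\xi|^2|\hat\theta|^2 + |\hat v|^2\bigr).$$
This gives dissipation in $\hat v$ and $\hat\theta$, but not in the remaining components $i\xi(\hat u + \tau \hat v)$ and $\tau\hat w + \hat v$ that also enter $|\hat{\VF}|^2$.

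Next, I would introduce auxiliary functionals tailored to recover dissipation for those missing components. Concretely I would try the cross products
$$F_1(\xi,t) = \Re\bigl\{(\tau\hat w + \hat v)\,\overline{(\hat u + \tau \hat v)}\bigr\},\qquad F_2(\xi,t) = \Re\bigl\{\hat v\,\overline{\hat\theta}\bigr\},$$
and similar. Differentiating $F_1$ and using \eqref{Second_equation_system_Fourier}--\eqref{Third_equation_system_Fourier} should produce, up to good terms, a negative multiple of $|\xi|^2|\hat u + \tau\hat v|^2$ together with remainders controlled by $|\hat v + \tau\hat w|^2$, $|\xi|^2|\hat v|^2$ and $|\xi|^2|\hat\theta|^2$, the latter two of which are already dissipated by $E_0$. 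An analogous computation for $F_2$, exploiting \eqref{Fourth_equation_system_Fourier}, couples the temperature dissipation to $\hat v+\tau \hat w$ so that a negative multiple of $|\hat v+\tau \hat w|^2$ appears as well. The aim of this step is to produce, for each quadratic piece of $|\hat{\VF}|^2$ that is not yet dissipated by $E_0$, an auxiliary identity whose bad terms can be absorbed (via Young's inequality) by the already-dissipated quantities once a sufficiently small weight is placed in front.

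Finally, I would define the full Lyapunov functional
$$F(\xi,t) = N\, E_0(\xi,t) + \sum_j \lambda_j(\xi)\, F_j(\xi,t),$$
and pick $N$ large and the weights $\lambda_j(\xi)$ small, but possibly $|\xi|$-dependent (for instance of the form $\lambda_j/(1+|\xi|^2)$), so that $F\sim |\hat{\VF}|^2$ uniformly in $\xi$ and simultaneously $\frac{d}{dt}F\leq -c\,\rF(\xi)\,F$. The main obstacle I anticipate is precisely the low-frequency versus high-frequency matching: the target rate $\rF(\xi)=|\xi|^2/(1+|\xi|^2)$ behaves like $|\xi|^2$ as $|\xi|\to 0$ but is only $O(1)$ as $|\xi|\to\infty$, so the weights $\lambda_j(\xi)$ have to be calibrated frequency-by-frequency in order that both the equivalence $F\sim |\hat{\VF}|^2$ and the dissipation inequality survive in both regimes. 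Once suitable weights are fixed, Gronwall's lemma applied to the resulting scalar ODE inequality gives \eqref{U_Inequality}, completing the proof.
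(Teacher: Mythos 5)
Your overall architecture (energy identity in Fourier space, plus cross-term correctors with frequency-dependent weights, then Young and Gronwall) is exactly the paper's, and your $F_1$ coincides with the paper's $\F_1$. However, three concrete points in your execution are wrong or would fail. First, the energy identity for the actual system is an equality, $\frac{d}{dt}\hat{\EF}=-(\beta-\tau)|\xi|^2|\hat v|^2-|\xi|^2|\hat\theta|^2$ (see \eqref{dE_dt_1}); there is no $-|\hat v|^2$ term. That term appears only in the semigroup computation for the \emph{perturbed} operator $\AbF$, whose added zero-order terms are absent from \eqref{System_New_2}. Any absorption argument relying on unweighted $|\hat v|^2$ dissipation is therefore unavailable: the bad term $C(1+|\xi|^2)|\hat v|^2$ coming from the corrector for $|\hat v+\tau\hat w|^2$ must be killed by $(\beta-\tau)|\xi|^2|\hat v|^2$, which forces the weight in front of that corrector to be $O\bigl(|\xi|^2/(1+|\xi|^2)\bigr)$, not $O\bigl(1/(1+|\xi|^2)\bigr)$.

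Second, and most importantly, your $F_2=\Re\{\hat v\,\overline{\hat\theta}\}$ does not produce a negative multiple of $|\hat v+\tau\hat w|^2$: differentiating it yields $-\eta|\xi|^2|\hat v|^2-\eta\tau|\xi|^2\Re(\hat v\bar{\hat w})$ plus indefinite terms, never a full square in $\hat v+\tau\hat w$. The mechanism that dissipates $|\hat v+\tau\hat w|^2$ in the regime $0<\tau<\beta$ is not thermal at all; it is the viscoelastic cross term $\F_2=-\tau\Re(\bar{\hat v}(\hat v+\tau\hat w))$ of \eqref{F_2_Functional}, which via $\tau\hat v_t=\tau\hat w=(\hat v+\tau\hat w)-\hat v$ delivers $-|\hat v+\tau\hat w|^2$ with remainders controlled by $(1+|\xi|^2)|\hat v|^2$ (see \eqref{dF_2_dt_1}). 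A thermal coupling of the type you propose only yields $-\eta|\xi|^2|\hat v+\tau\hat w|^2$ (this is precisely the functional $\F_3$ used in the case $\tau=\beta$), and the extra factor $|\xi|^2$ at low frequencies would degrade the rate to $|\xi|^4$ near $\xi=0$, i.e.\ to the $(1+t)^{-N/8}$ decay of Theorem \ref{Decay_Fourier_2} rather than the claimed $(1+t)^{-N/4}$. Third, you leave the weight calibration open and your tentative choice $\lambda_j/(1+|\xi|^2)$ is the wrong one: it gives dissipation only $\frac{1}{1+|\xi|^2}$ times the corresponding energy pieces at high frequency, hence a regularity-loss estimate instead of the uniform rate $\rF(\xi)\sim 1$ as $|\xi|\to\infty$. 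The correct weights, as in \eqref{Lyapunov}, are $|\xi|^2/(1+|\xi|^2)$ on both correctors; with that choice and with $\F_2$ replaced as above, your plan closes.
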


The proof of this proposition is  done using the following lemmas.

\begin{lemma}\label{dissipa_Energy}
Assume that $0<\tau<\beta$. Let $(\hat{u},\hat{v},\hat{w},\hat{\theta})(\xi,t)$ be the solution of (\ref{System_New_2}).
We define the energy functional associated to the system in the Fourier space (\ref{System_New_2}) as
\begin{equation}\label{Energy_functional}
\hat{\EF}(\xi,t):=\frac{1}{2}\left\{ | \hat{v}+\tau\hat{w}|^2+|\xi|^2|\hat{u}+\tau\hat{v}|^2+\tau(\beta-\tau)|\xi|^2|\hat{v}|^2+|\hat{\theta}|^2\right\}\geq 0.
\end{equation}
Then for all $t\geq 0$, there exist two positive constants $c_1$ and $c_2$ such that
\begin{equation}\label{Positivity_Energy}
 c_1|\hat{\VF}(\xi, t)|^2\leq \hat{\EF}(\xi,t)\leq c_2 |\hat{\VF}(\xi, t)|^2
\end{equation}
and
\begin{equation}\label{dE_dt_1}
\frac{d}{dt}\hat{\EF}(\xi,t)=-(\beta-\tau)|\xi|^2|\hat{v}|^2-|\xi|^2|\hat{\theta}|^2,
\end{equation}
where
\begin{equation*}
|\hat{\VF}(\xi, t)|^2=| \tau\hat{w}+\hat{v}|^2+|\xi|^2|\tau\hat{v}+\hat{u}|^2+|\xi|^2|\hat{v}|^2+|\hat{\theta}|^2.
\end{equation*}

\end{lemma}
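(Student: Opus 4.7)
The plan is to verify both \eqref{Positivity_Energy} and \eqref{dE_dt_1} by direct computation from the definition \eqref{Energy_functional}. For the equivalence, I would first observe that $2\hat{\EF}$ and $|\hat{\VF}(\xi,t)|^2$ differ only in the coefficient of the $|\xi|^2|\hat{v}|^2$ summand: $\tau(\beta-\tau)$ in the former versus $1$ in the latter. Since $0<\tau<\beta$ makes $\tau(\beta-\tau)>0$, both quadratic forms are non-negative combinations of the same four squares and are therefore equivalent, with constants $c_1=\tfrac{1}{2}\min\{1,\tau(\beta-\tau)\}$ and $c_2=\tfrac{1}{2}\max\{1,\tau(\beta-\tau)\}$. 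Non-negativity of $\hat{\EF}$ is then immediate.

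For the energy identity \eqref{dE_dt_1}, I would differentiate the four terms of \eqref{Energy_functional} separately and use the system \eqref{System_New_2} to eliminate every time derivative. Combining \eqref{Second_equation_system_Fourier}--\eqref{Third_equation_system_Fourier} gives $(\hat v+\tau\hat w)_t=-|\xi|^2\hat u-\beta|\xi|^2\hat v+\eta|\xi|^2\hat\theta$; from \eqref{First_equation_system_Fourier}--\eqref{Second_equation_system_Fourier} one gets $(\hat u+\tau\hat v)_t=\hat v+\tau\hat w$; and \eqref{Fourth_equation_system_Fourier} yields $\hat\theta_t=-|\xi|^2\hat\theta-\eta\tau|\xi|^2\hat w-\eta|\xi|^2\hat v$. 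Pairing each derivative with the appropriate conjugate and taking real parts produces the dissipative contribution $-(\beta-\tau)|\xi|^2|\hat v|^2-|\xi|^2|\hat\theta|^2$ together with a handful of cross-terms that must be shown to cancel.

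The cancellations fall into three independent packets. The $\hat u$-cross terms $-|\xi|^2\Re[\hat u\overline{\hat v}]-\tau|\xi|^2\Re[\hat u\overline{\hat w}]$ produced from $\tfrac12|\hat v+\tau\hat w|^2$ annihilate the corresponding contributions $+|\xi|^2\Re[\hat v\overline{\hat u}]+\tau|\xi|^2\Re[\hat w\overline{\hat u}]$ that come from $\tfrac12|\xi|^2|\hat u+\tau\hat v|^2$, by the identity $\Re[z]=\Re[\bar z]$. The three $\Re[\hat w\overline{\hat v}]$ pieces that appear with coefficients $-\beta\tau$, $+\tau^2$, and $+\tau(\beta-\tau)$ add up to $0$ precisely because of the weight $\tau(\beta-\tau)$ chosen in front of $|\xi|^2|\hat v|^2$: this is the structural reason for the unusual coefficient in \eqref{Energy_functional}. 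Finally, the coupling contributions $\eta|\xi|^2\Re[\hat\theta\overline{\hat v+\tau\hat w}]$ arising from the acoustic equation cancel the symmetric terms $-\eta|\xi|^2\Re[(\hat v+\tau\hat w)\overline{\hat\theta}]$ arising from the heat equation, again by $\Re[z]=\Re[\bar z]$.

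The main obstacle is combinatorial rather than analytic: every cross term generated by differentiation has to be accounted for and shown to fit into one of these three cancelling packets. The deeper reason everything works is the pair of hidden symmetries in \eqref{Third_equation_system_Fourier}--\eqref{Fourth_equation_system_Fourier}, namely that the coupling $-\eta|\xi|^2\hat\theta$ in the acoustic equation is \emph{dual} to the coupling $\eta|\xi|^2(\hat v+\tau\hat w)$ in the heat equation, and that the weight $\tau(\beta-\tau)$ is forced by the $\Re[\hat w\overline{\hat v}]$ bookkeeping. The second mechanism degenerates when $\tau=\beta$, which is why the lemma requires the strict inequality and why the borderline case must be treated with a different functional in Section~\ref{stabFourier_3}.
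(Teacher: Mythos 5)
Your computation matches the paper's proof essentially line for line: the paper likewise derives \eqref{dE_dt_1} by pairing $(\hat v+\tau\hat w)_t$, $(\hat u+\tau\hat v)_t$, $\hat v_t$ and $\hat\theta_t$ with the appropriate conjugates, taking real parts, and letting exactly your three packets of cross terms cancel, while the norm equivalence is read off from the coefficients of the common squares just as you do. One small correction to your closing remark: the $\func{Re}(\hat w\bar{\hat v})$ cancellation does \emph{not} degenerate at $\tau=\beta$ (the coefficients $-\beta\tau+\tau^2+\tau(\beta-\tau)$ still sum to zero, and the paper reuses the same identity with $\tau=\beta$ in Section \ref{stabFourier_3}); what fails in the borderline case is the lower bound in \eqref{Positivity_Energy} and the vanishing of the dissipation term $-(\beta-\tau)|\xi|^2|\hat v|^2$, which is the actual reason the lemma assumes the strict inequality.
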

\begin{remark}
Observe that this energy is the same one defined in \cite[Section 3]{PellSaid_2016} plus the heat therm.  So, the proof of this Proposition will follow the one of Lemma 3.2 in \cite{PellSaid_2016}.
\end{remark}

\begin{proof}
First observe that inequality (\ref{Positivity_Energy}) holds under the assumption $0<\tau<\beta$.

Let us now prove equality \eqref{dE_dt_1}. Summing up the equations \eqref{Second_equation_system_Fourier} and \eqref{Third_equation_system_Fourier}, we get
\begin{equation}\label{Sum_Equations}
(\hat{v}+\tau \hat{w})_t=-|\xi|^2\hat{u}-\beta |\xi|^2\hat{v}+\eta |\xi|^2\hat{\theta}.
\end{equation}
Multiplying \eqref{Sum_Equations} and  $\bar{\hat{v}}+\tau \bar{\hat{w}}$ and taking the real parts, we obtain,
\begin{eqnarray}\label{w_estimate}
\frac{1}{2}\frac{d}{dt} |\hat{v}+\tau \hat{w}|^2&=&-\tau |\xi|^2\func{Re}(\hat{u}\bar{\hat{w}})-\beta\tau |\xi|^2\func{Re}(\hat{v}\bar{\hat{w}})-|\xi|^2\func{Re}(\hat{u}\bar{\hat{v}})\notag\\
&&-\beta|\xi|^2|\hat{v}|^2+\eta |\xi|^2\func{Re}(\hat{\theta} \bar{\hat{v}})+\eta\tau |\xi|^2\func{Re}(\hat{\theta} \bar{\hat{w}}),
\end{eqnarray}
Next, multiplying the  equation \eqref{Second_equation_system_Fourier}  by $\tau (\beta-\tau )\bar{\hat{v}}$ and taking the real part, we get
\begin{equation}\label{v_Estimate}
\frac{1}{2}\tau (\beta-\tau )\frac{d}{dt}|\hat{v}|^2=\tau (\beta-\tau )\func{Re}(\hat{w}\bar{\hat{v}}).
\end{equation}
Now, multiplying the equation \eqref{Second_equation_system_Fourier} by $\tau $ and adding the result to the  equation \eqref{First_equation_system_Fourier}, we obtain
\begin{equation}\label{Equation_u_v}
(\hat{u}+\tau \hat{v})_t=\tau \hat{w}+\hat{v}.
\end{equation}
Multiplying \eqref{Equation_u_v} by $\bar{\hat{u}}+\tau \bar{\hat{v}}$ and taking the real parts, we get
\begin{equation}\label{u_v_derivative}
\frac{1}{2}\frac{d}{dt}|\hat{u}+\tau \hat{v}|^2=\tau \func{Re}(\hat{w}\bar{\hat{u}})+\tau ^2\func{Re}(\hat{w}\bar{\hat{v}})+\func{Re}(\hat{v}\bar{\hat{u}})+\tau |\hat{v}|^2.
\end{equation}
Finally, multiplying the equation \eqref{Fourth_equation_system_Fourier} by $\bar{\hat{\theta}}$ and taking the real part, we obtain
\begin{equation}\label{Equation_Theta_0}
\frac{1}{2}\frac{d}{dt}|\theta|^2=-|\xi|^2|\hat{\theta}|^2-\eta\tau|\xi|^2\func{Re} (\hat{w}\bar{\hat{\theta}})-\eta|\xi|^2\func{Re} (\hat{v}\bar{\hat{\theta}}).
\end{equation}

Now, computing $\eqref{w_estimate}+|\xi|^2\eqref{v_Estimate}+|\xi|^2\eqref{u_v_derivative}+\eqref{Equation_Theta_0}$, we obtain \eqref{dE_dt_1}, which finishes the proof of Lemma \ref{dissipa_Energy}.

\end{proof}

The next two lemmas follow the same ideas as in \cite{PellSaid_2016}.

\begin{lemma}\label{Lemma_F_1}
Let us define the functional $\F_1(\xi,t)$ as
\begin{equation}\label{F_1_Functional}
\F_1(\xi,t)=\func{Re}\left( (\bar{\hat{u}}+\tau \bar{\hat{v}})(\hat{v}+\tau \hat{w})\right).
\end{equation}
Then, for any $\epsilon_0>0$, we have
\begin{equation}\label{dF_1_dt}
\frac{d}{dt}\F_1(\xi,t)+(1-\epsilon_0)|\xi|^2|\hat{u}+\tau \hat{v}|^2
\leq  |\hat{v}+\tau \hat{w}|^2+C(\epsilon_0)|\xi|^2(|\hat{v}|^2+|\hat{\theta}|^2).
\end{equation}

\end{lemma}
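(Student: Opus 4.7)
The plan is to differentiate $\F_1$ directly in $t$ and use the identities already derived in the proof of Lemma \ref{dissipa_Energy} to substitute for the time derivatives. By \eqref{Equation_u_v} we have $(\hat{u}+\tau\hat{v})_t = \hat{v}+\tau\hat{w}$, and by \eqref{Sum_Equations} we have $(\hat{v}+\tau\hat{w})_t = -|\xi|^2\hat{u} - \beta|\xi|^2\hat{v} + \eta|\xi|^2\hat{\theta}$. Applying the product rule to \eqref{F_1_Functional} then yields
\begin{equation*}
\frac{d}{dt}\F_1(\xi,t) = |\hat{v}+\tau\hat{w}|^2 + \Re\!\left((\bar{\hat{u}}+\tau\bar{\hat{v}})\bigl[-|\xi|^2\hat{u} - \beta|\xi|^2\hat{v} + \eta|\xi|^2\hat{\theta}\bigr]\right).
\end{equation*}

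The key algebraic step is to recognize the coercive term $-|\xi|^2|\hat{u}+\tau\hat{v}|^2$ hidden inside the right-hand side. I would rewrite
\begin{equation*}
-|\xi|^2\hat{u} - \beta|\xi|^2\hat{v} = -|\xi|^2(\hat{u}+\tau\hat{v}) - (\beta-\tau)|\xi|^2\hat{v},
\end{equation*}
so that the dominant contribution becomes exactly $-|\xi|^2|\hat{u}+\tau\hat{v}|^2$ and the remaining pieces are the cross terms $-(\beta-\tau)|\xi|^2\Re((\bar{\hat{u}}+\tau\bar{\hat{v}})\hat{v})$ and $\eta|\xi|^2\Re((\bar{\hat{u}}+\tau\bar{\hat{v}})\hat{\theta})$.

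These two cross terms are then controlled by Young's inequality with parameter $\epsilon_0$: each one absorbs a fraction $\tfrac{\epsilon_0}{2}|\xi|^2|\hat{u}+\tau\hat{v}|^2$ into the coercive term, leaving a multiple of $|\xi|^2|\hat{v}|^2$ and $|\xi|^2|\hat{\theta}|^2$ respectively, with constants depending on $\epsilon_0$, $\beta-\tau$ and $\eta$. Rearranging produces exactly \eqref{dF_1_dt}.

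I do not anticipate a serious obstacle here: the identity for $(\hat{v}+\tau\hat{w})_t$ in \eqref{Sum_Equations} has already been established, and the only mildly delicate point is the choice of splitting $-|\xi|^2\hat{u}-\beta|\xi|^2\hat{v}$ so that the $(\hat{u}+\tau\hat{v})$ structure emerges naturally; once this is done the estimate is a routine application of Young's inequality and uses $0<\tau<\beta$ only through the boundedness of the constant $\beta-\tau$ (the sign is not needed at this step, as the $|\xi|^2|\hat{v}|^2$ term on the right-hand side is positive in any case).
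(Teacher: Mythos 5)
Your proposal is correct and follows essentially the same route as the paper: the product rule computation with the identities \eqref{Equation_u_v} and \eqref{Sum_Equations} is exactly the paper's summing of the two multiplied equations, your splitting $-|\xi|^2\hat{u}-\beta|\xi|^2\hat{v}=-|\xi|^2(\hat{u}+\tau\hat{v})-(\beta-\tau)|\xi|^2\hat{v}$ is the paper's add-and-subtract of $\tau|\xi|^2\hat{v}$ leading to \eqref{F_1_main}, and the final step is the same application of Young's inequality. Your remark that the sign of $\beta-\tau$ is not needed here is also accurate.
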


\begin{proof}
The proof of this Lemma can be done following the proof of Lemma 3.2 of \cite{PellSaid_2016}. In order to make the paper self-contained, we recall the proof here. Multiplying equation \eqref{Sum_Equations} by $\bar{\hat{u}}+\tau \bar{\hat{v}}$ and equation \eqref{Equation_u_v} by $\bar{\hat{v}}+\tau \bar{\hat{w}}$ we get, respectively,
\begin{eqnarray*}
(\hat{v}+\tau \hat{w})_t(\bar{\hat{u}}+\tau \bar{\hat{v}}) &=&(-|\xi|^2\hat{u}-\beta |\xi|^2\hat{v}+\eta |\xi|^2\hat{\theta})(\bar{\hat{u}}+\tau \bar{\hat{v}})\\
&=&(-|\xi|^2\hat{u}-\beta |\xi|^2\hat{v}-\tau |\xi|^2\hat{v}+\tau |\xi|^2\hat{v}+\eta |\xi|^2\hat{\theta}))(\bar{\hat{u}}+\tau \bar{\hat{v}})
\end{eqnarray*}
and
\begin{equation*}
(\hat{u}+\tau \hat{v})_t(\bar{\hat{v}}+\tau \bar{\hat{w}})=(\tau \hat{w}+\hat{v})(\bar{\hat{v}}+\tau \bar{\hat{w}}).
\end{equation*}
Summing up the above two equations and taking the real part, we obtain
\begin{equation}\label{F_1_main}
\frac{d}{dt}\F_1(\xi,t)+|\xi|^2|\hat{u}+\tau \hat{v}|^2-|\hat{v}+\tau \hat{w}|^2
=|\xi|^2(\tau -\beta)\func{Re}(\hat{v}(\bar{\hat{u}}+\tau \bar{\hat{v}}))+\eta|\xi|^2\func{Re}(\hat{\theta}(\bar{\hat{u}}+\tau \bar{\hat{v}}))
\end{equation}
Applying Young's inequality for any $\epsilon_0>0$, we obtain \eqref{dF_1_dt}. This ends the proof of Lemma \ref{Lemma_F_1}.
\end{proof}

\begin{lemma}\label{Lemma_F_2}
Let us define the functional $\F_2(\xi,t)$ as
\begin{equation}\label{F_2_Functional}
\F_2(\xi,t)=-\tau \func{Re}( \bar{\hat{v}}(\hat{v}+\tau \hat{w})).
\end{equation}
Then, for any $\epsilon_1,\epsilon_2>0$, we have


\begin{equation}\label{dF_2_dt_1}
\frac{d}{dt}\F_2(\xi,t)+(1-\epsilon_1)|\hat{v}+\tau \hat{w}|^2 \leq
 C(\epsilon_1,\epsilon_2,\epsilon_3)(1+|\xi|^2)|\hat{v}|^2+\epsilon_2|\xi|^2|\hat{u}+\tau \hat{v}|^2 + \epsilon_3|\xi|^2|\hat{\theta}|^2.
\end{equation}
\end{lemma}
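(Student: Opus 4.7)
\begin{pf}[Proof proposal]
The plan is to follow the same pattern as Lemma~\ref{Lemma_F_1}: differentiate $\F_2$ using the equations of \eqref{System_New_2}, identify the ``good'' term $-|\hat{v}+\tau\hat{w}|^2$ by an algebraic rearrangement, and control the remaining cross terms with Young's inequality.

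First I would expand $\F_2(\xi,t)=-\tau\func{Re}(\bar{\hat{v}}(\hat{v}+\tau\hat{w}))$ and take its time derivative. Using $\hat{v}_t=\hat{w}$ from \eqref{Second_equation_system_Fourier} and the combined identity \eqref{Sum_Equations}, namely $(\hat{v}+\tau\hat{w})_t=-|\xi|^2\hat{u}-\beta|\xi|^2\hat{v}+\eta|\xi|^2\hat{\theta}$, I get
\[
\frac{d}{dt}\F_2=-\tau\func{Re}(\bar{\hat{w}}\hat{v})-\tau^2|\hat{w}|^2+\tau|\xi|^2\func{Re}(\bar{\hat{v}}\hat{u})+\beta\tau|\xi|^2|\hat{v}|^2-\eta\tau|\xi|^2\func{Re}(\bar{\hat{v}}\hat{\theta}).
\]
The key algebraic step is then to recognise that the first two terms reconstruct $-|\hat{v}+\tau\hat{w}|^2$ modulo lower-order pieces:
\[
-\tau\func{Re}(\bar{\hat{v}}\hat{w})-\tau^2|\hat{w}|^2=-|\hat{v}+\tau\hat{w}|^2+|\hat{v}|^2+\tau\func{Re}(\bar{\hat{v}}\hat{w}),
\]
which one verifies directly by expanding $|\hat{v}+\tau\hat{w}|^2=|\hat{v}|^2+2\tau\func{Re}(\bar{\hat{v}}\hat{w})+\tau^2|\hat{w}|^2$. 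Moving $|\hat{v}+\tau\hat{w}|^2$ to the left-hand side, I arrive at the identity
\[
\frac{d}{dt}\F_2+|\hat{v}+\tau\hat{w}|^2=|\hat{v}|^2+\tau\func{Re}(\bar{\hat{v}}\hat{w})+\tau|\xi|^2\func{Re}(\bar{\hat{v}}\hat{u})+\beta\tau|\xi|^2|\hat{v}|^2-\eta\tau|\xi|^2\func{Re}(\bar{\hat{v}}\hat{\theta}).
\]

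Next I would remove the stray $\hat{w}$ and $\hat{u}$ in favour of the quantities appearing in the target inequality, by means of the algebraic identities
\[
\tau\func{Re}(\bar{\hat{v}}\hat{w})=\func{Re}(\bar{\hat{v}}(\hat{v}+\tau\hat{w}))-|\hat{v}|^2,\qquad \tau|\xi|^2\func{Re}(\bar{\hat{v}}\hat{u})=\tau|\xi|^2\func{Re}(\bar{\hat{v}}(\hat{u}+\tau\hat{v}))-\tau^2|\xi|^2|\hat{v}|^2.
\]
Applying Young's inequality to each of the three cross terms $\func{Re}(\bar{\hat{v}}(\hat{v}+\tau\hat{w}))$, $|\xi|^2\func{Re}(\bar{\hat{v}}(\hat{u}+\tau\hat{v}))$ and $|\xi|^2\func{Re}(\bar{\hat{v}}\hat{\theta})$ with parameters $\epsilon_1$, $\epsilon_2$, $\epsilon_3$ respectively, the three ``small'' contributions $\epsilon_1|\hat{v}+\tau\hat{w}|^2$, $\epsilon_2|\xi|^2|\hat{u}+\tau\hat{v}|^2$ and $\epsilon_3|\xi|^2|\hat{\theta}|^2$ appear with the desired signs, while the conjugate large constants $C(\epsilon_i)$ multiply only $|\hat{v}|^2$ or $|\xi|^2|\hat{v}|^2$. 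Absorbing $\epsilon_1|\hat{v}+\tau\hat{w}|^2$ on the left and collecting the $|\hat{v}|^2$ and $|\xi|^2|\hat{v}|^2$ contributions into a single $C(\epsilon_1,\epsilon_2,\epsilon_3)(1+|\xi|^2)|\hat{v}|^2$ yields exactly \eqref{dF_2_dt_1}.

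The only mildly delicate step is the first algebraic rearrangement giving a clean coefficient $-1$ in front of $|\hat{v}+\tau\hat{w}|^2$; the rest is purely mechanical applications of Young's inequality. Since no use of the sign assumption $0<\tau<\beta$ is needed in this lemma (it is only used later when assembling the Lyapunov functional), the argument above is valid for all $\tau,\beta>0$.
\end{pf}
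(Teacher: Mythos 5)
Your proposal is correct and follows essentially the same route as the paper: after differentiating $\F_2$ with \eqref{Second_equation_system_Fourier} and \eqref{Sum_Equations}, your rearrangement reproduces exactly the paper's intermediate identity $\frac{d}{dt}\F_2+|\hat{v}+\tau\hat{w}|^2-\tau(\beta-\tau)|\xi|^2|\hat{v}|^2=\tau|\xi|^2\func{Re}\{(\hat{u}+\tau\hat{v})\bar{\hat{v}}\}+\func{Re}\{(\hat{v}+\tau\hat{w})\bar{\hat{v}}\}-\eta\tau|\xi|^2\func{Re}(\hat{\theta}\bar{\hat{v}})$, and the Young-inequality step is identical. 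Your closing observation that the estimate needs no sign condition on $\beta-\tau$ (only a constant depending on $\tau,\beta,\eta$) is also consistent with how the lemma is stated and used.
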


\begin{proof}
The proof of this Lemma can be done following the proof of Lemma 3.4 of \cite{PellSaid_2016}. As before, in order to make the paper self-contained, we include the whole proof here. Multiplying the  equation  \eqref{Second_equation_system_Fourier} by $-\tau (\bar{\hat{v}}+\tau \bar{\hat{w}})$ and \eqref{Sum_Equations} by $-\tau \bar{\hat{v}}$,
we obtain, respectively,
\begin{equation*}
-\tau \hat{v}_t(\bar{\hat{v}}+\tau \bar{\hat{w}})=-\tau \hat{w}(\bar{\hat{v}}+\tau \bar{\hat{w}})
\end{equation*}
and
\begin{eqnarray*}
-\tau (\hat{v}+\tau \hat{w})_t\bar{\hat{v}}&=&(\tau |\xi|^2\hat{u}+\beta\tau |\xi|^2\hat{v}-\eta\tau |\xi|^2\hat{\theta})\bar{\hat{v}}\\
&=& \Big(\tau |\xi|^2\hat{u}+\tau \beta |\xi|^2\hat{v}+\tau ^2|\xi|^2\hat{v}-\tau ^2|\xi|^2\hat{v}-\eta \tau|\xi|^2\hat{\theta}+(\hat{v}+\tau \hat{w})-(\hat{v}+\tau \hat{w})\Big)\bar{\hat{v}}.
\end{eqnarray*}
Summing up the above two equations and taking the real parts, we obtain
\begin{eqnarray*}
&&\frac{d}{dt}\F_2(\xi,t)+|\hat{v}+\tau \hat{w}|^2-\tau (\beta-\tau )|\xi|^2|\hat{v}|^2\notag\\
&=&\tau |\xi|^2\func{Re}\left\{(\hat{u}+\tau \hat{v})\bar{\hat{v}}\right\}+\func{Re}\left\{(\hat{v}+\tau \hat{w})\bar{\hat{v}}\right\}-\eta \tau |\xi|^2\func{Re}(\hat{\theta}\bar{\hat{v}}).
\end{eqnarray*}
Applying Young's inequality, we obtain the estimate \eqref{dF_2_dt_1} for any $\epsilon_1,\epsilon_2,\epsilon_3>0$.
\end{proof}

\begin{proof}[Proof  of Proposition \ref{Main_Lemma}]
Again, we follow the same idea that in the proof of Proposition 2.1 in \cite{PellSaid_2016}. We define the Lyapunov functional $\LF(\xi,t)$ as
\begin{equation}\label{Lyapunov}
\LF(\xi,t)=\gamma_0\hat{\EF}(\xi,t)+\frac{|\xi|^2}{1+|\xi|^2}\F_1(\xi,t)+\gamma_1\frac{|\xi|^2}{1+|\xi|^2}\F_2(\xi,t),
\end{equation}
where $\gamma_0$ and $\gamma_1$ are positive numbers that will be fixed later on.

Taking the derivative of \eqref{Lyapunov} with respect to $t$ and making use of \eqref{dE_dt_1}, \eqref{dF_1_dt} and \eqref{dF_2_dt_1}, we obtain
\begin{eqnarray*}
&&\frac{d}{dt}\LF(\xi,t)+\Big(\gamma_1(1-\epsilon_1)-1\Big)\frac{|\xi|^2}{1+|\xi|^2}|\hat{v}+\tau \hat{w}|^2\notag\\
&&+\Big((1-\epsilon_0)-\gamma_1\epsilon_2\Big)\frac{|\xi|^2}{1+|\xi|^2}(|\xi|^2|\hat{u}+\tau \hat{v}|^2)\notag\\
&&+\Big(\gamma_0(\beta-\tau )-C(\epsilon_0)-\gamma_1C(\epsilon_1,\epsilon_2,\epsilon_3)\Big)|\xi|^2|\hat{v}|^2\notag\\
&&+\Big(\gamma_0-C(\epsilon_0)-\gamma_1\epsilon_3\Big)|\xi|^2|\hat{\theta}|^2\leq 0,
\end{eqnarray*}
where we used the fact that $|\xi|^2/(1+|\xi|^2)\leq 1$.
In the above estimate, we can fix our constants in such a way that the previous coefficients are positive. This can be achieved as follows: we pick $\epsilon_0$ and $\epsilon_1$ small enough such that $\epsilon_0<1$ and $\epsilon_1<1$. After that, we take $\gamma_1$ large enough such that
\begin{equation*}
\gamma_1>\frac{1}{1-\epsilon_1}.
\end{equation*}
Once $\gamma_1$ and $\epsilon_0$ are fixed, we select $\epsilon_2$ small enough such that
\begin{equation*}
\epsilon_2<\frac{1-\epsilon_0}{\gamma_1}.
\end{equation*}
Finally, and recalling that $\tau<\beta$, we may choose $\gamma_0$ large enough such that
\begin{equation*}
\gamma_0>\max\left\{\frac{C(\epsilon_0)+\gamma_1C(\epsilon_1,\epsilon_2)}{\beta-\tau }, C(\epsilon_0)+\gamma_1\epsilon_3\right\}
\end{equation*}
Consequently, we deduce that there exists a positive constant $\gamma_2$ such that for all $t\geq 0$,
\begin{eqnarray}\label{dL_dt_2}
&&\frac{d}{dt}\LF(\xi,t)+\gamma_2\frac{|\xi|^2}{1+|\xi|^2}\hat{\EF}(\xi,t)\leq 0.
\end{eqnarray}

On the other hand, it is not difficult to see that from \eqref{Energy_functional}, \eqref{F_1_Functional}, \eqref{F_2_Functional} and \eqref{Lyapunov} and for $\gamma_0$, large enough, that there exists two positive constants $\gamma_3$ and $\gamma_4$ such that
\begin{equation}\label{Equival_E_L}
\gamma_3\hat{\EF}(\xi,t)\leq \LF(\xi,t)\leq \gamma_4\hat{\EF}(\xi,t).
\end{equation}
Combining \eqref{dL_dt_2} and \eqref{Equival_E_L}, we deduce that there exists a positive constant $\gamma_5$ such that for all $t\geq 0$,
\begin{equation*}\label{Estimate_L_main}
\frac{d}{dt}\LF(\xi,t)+\gamma_5\frac{|\xi|^2}{1+|\xi|^2}\LF(\xi,t)\leq 0.
\end{equation*}
A simple application of Gronwall's lemma leads to the estimate \eqref{U_Inequality}, as $\LF(\xi,t)$ and the norm of $|\hat{\VF}(\xi,t)|^2$ are equivalent.
\end{proof}
Now we have all the ingredients, we can proceed with the proof of Theorem \ref{Decay_Second_system}.
\begin{proof}[Proof of Theorem \ref{Decay_Second_system}]
Using Proposition \ref{Main_Lemma}, the proof of  Theorem \ref{Decay_Second_system} is the same as the one of Theorem 3.6 in \cite{PellSaid_2016}, but we include it here for the sake of self-containedness of the present paper.

To prove Theorem \ref{Decay_Second_system}, we have by Plancherel  theorem and the estimate (\ref{U_Inequality}) that
\begin{equation}\label{Pranch_Identity}
\left\Vert \nabla^{k}\VF(t)\right\Vert _{L^{2}(\R^N)}^{2}=\int_{\R^N
}\left\vert \xi \right\vert ^{2k}\vert \hat{\VF}(\xi ,t)\vert^{2}d\xi
\leq C\int_{\R^N
}\left\vert \xi \right\vert ^{2k}e^{-c\rF(\xi)t}\vert \hat{\VF}(\xi,0)\vert^{2}d\xi,
\end{equation}
It is obvious that the term on the right-hand side of (\ref{Pranch_Identity}) depends on the behavior of the function $\rF(\xi)$. Since
\begin{equation*}\label{rho_behavior}
\rF(\xi)\geq\left\{
\begin{array}{ll}
\frac{1}{2}|\xi|^2,& \text{for } |\xi|\leq 1,\vspace{0.2cm} \\
\frac{1}{2}, & \text{for } |\xi|\geq 1,
\end{array}%
\right.
\end{equation*}%
then it is natural to write the integral on the right-hand side of (\ref{Pranch_Identity}) as
\begin{eqnarray}
\int_{\mathbb{R}
}\left\vert \xi \right\vert ^{2k}e^{-c\rF(\xi)t}\vert \hat{\VF}(\xi,0)\vert^{2}d\xi &=
&\int_{\left\vert \xi \right\vert \leq 1}\left\vert \xi \right\vert
^{2k}e^{-c\rF (\xi )t}\vert \hat{\VF}(\xi ,0)\vert ^{2}d\xi+\int_{\left\vert \xi \right\vert \geq 1}C\left\vert \xi \right\vert
^{2k}e^{-c\rF(\xi )t}\vert \hat{\VF}(\xi ,0)\vert ^{2}d\xi
\label{inequality L1+L2}\notag \\
&:= &L_{1}+L_{2}.  \label{L_1_L_2_estimate}
\end{eqnarray}
Concerning the integral $L_1$, we have
\begin{equation}
L_{1}
\leq C\sup_{\vert \xi \vert \leq 1} \vert \hat{\VF}
(\xi ,0)\vert ^{2} \int_{\left\vert \xi \right\vert \leq
1}\left\vert \xi \right\vert ^{2k}e^{-\frac{c}{2}|\xi| {^2}t}
d\xi \leq C\Vert \hat{\VF^0}(t)\Vert _{L^{\infty
}}^{2}\int_{\left\vert \xi \right\vert \leq 1}\left\vert \xi \right\vert
^{2k}e^{-\frac{c}{2}|\xi|
{^2}%
t} d\xi . \nonumber
\end{equation}
Passing to the polar coordinates and using the following inequality (see Lemma 3.5 in \cite{PellSaid_2016}):
\begin{equation*}\label{Inequality_exponential}
\int_{\vert \xi \vert \leq 1}\left\vert \xi \right\vert ^{j }e^{-c|\xi| ^{2}t}d\xi \leq
C\left( 1+t\right) ^{-\left( j +N\right) /2},\ \ \textrm{for}\ N\geq 1
\end{equation*}
we deduce that
\begin{eqnarray}\label{L_1_inequality}
L_{1} \leq  C(1+t)^{-N/2-k}\left\Vert \VF^0\right\Vert _{L^{1
}(\R^N)}^{2}.
\end{eqnarray}
On the other hand, we have
\begin{equation}\label{L_2_estimate}
L_{2}
\leq Ce^{-\frac{1}{2}t} \int_{\left\vert \xi \right\vert \geq 1}\left\vert \xi
\right\vert ^{2k}\left\vert \hat{\VF}(\xi ,0)\right\vert
^{2}d\xi \leq Ce^{-ct}\left\Vert \nabla ^{k}\VF^0\right\Vert _{L^{2}(\R^N)}^{2}
\end{equation}
for a certain $c>0$. Consequently, the estimate (\ref{Main_estimate_Theorem_1}) follows by combining (\ref{L_1_L_2_estimate}), (\ref{L_1_inequality}) and (\ref{L_2_estimate}). Thus the proof of Theorem \ref{Decay_Second_system} is finished.
\end{proof}

\subsubsection{Asymptotic behaviour of the eigenvalues}\label{sec:eigFourier1}
In this section, we use the eigenvalues expansion method to show that the asymptotic expansion of the eigenvalues near zero and near infinity agrees with the decay rate obtained in Theorem \ref{Decay_Second_system}.
\begin{remark}\label{rmk:expansion1}
The decay rate in Theorem \ref{Decay_Second_system} comes from the exponent
$$\rF(\xi)= \dfrac{|\xi|^2}{1+|\xi|^2}$$
of Proposition \ref{Main_Lemma}. Observe that $\rF(\xi)\sim |\xi|^2$ when $|\xi|\to 0$, and $\rF(\xi)\sim 1$ when $|\xi|\to \infty$. This is the asymptotic behaviour we expect (and, actually, will obtain) for the real parts of the slowest  eigenvalues in these cases (see Lemmas \ref{lemma:eig0_Fourier1} and \ref{lemma:eiginfty_Fourier1} below).
\end{remark}

\begin{lemma}\label{lemma:eig0_Fourier1}
Assume that $0<\tau<\beta$.  Then the  real parts of the eigenvalues of the matrix $\Psi(\xi)$ defined in \eqref{ODE_Fourier} have the following expansion as $|\xi|\rightarrow 0$:
\begin{equation}\label{Eg_Exp_Four_0}
\left\{
\begin{array}{ll}
\func{Re}(\lambda_{1,2})(\xi) &= -\dfrac{\beta-\tau}{2}|\xi|^2+O(|\xi|^3),\vspace{0.2cm} \\
  \func{Re}(\lambda_{3})(\xi) &= -|\xi|^2+O(|\xi|^3), \vspace{0.2cm}\\
 \func{Re}(\lambda_{4})(\xi) &= -\dfrac{1}{\tau}+O(|\xi|^2).
 \end{array}
\right.
\end{equation}
\end{lemma}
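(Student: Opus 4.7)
The plan is to compute the characteristic polynomial of $\Psi(\xi) = L + |\xi|^{2} A$ and then extract asymptotic expansions of its four roots as $s := |\xi|^{2} \to 0^{+}$. Expanding $\det(\lambda I - L - s A)$ (e.g.\ along the first column) yields the quartic
$$p(\lambda, s) = \lambda^{4} + \Bigl(s + \tfrac{1}{\tau}\Bigr)\lambda^{3} + \Bigl(\tfrac{s(1+\beta)}{\tau} + s^{2}\eta^{2}\Bigr)\lambda^{2} + \Bigl(\tfrac{s}{\tau} + \tfrac{s^{2}(\beta+\eta^{2})}{\tau}\Bigr)\lambda + \tfrac{s^{2}}{\tau}.$$
At $s = 0$ this factors as $\lambda^{3}(\lambda + 1/\tau)$, so one eigenvalue branches off from $-1/\tau$ and three from the triple root at $0$.

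For the fast root I would invoke the implicit function theorem at the simple root $\lambda = -1/\tau$ of $p(\cdot,0)$. This produces a real-analytic branch $\lambda_{4}(s) = -1/\tau + a\,s + O(s^{2})$ for some $a \in \mathbb{R}$, and continuity keeps it real for small $s$. This is enough to conclude $\func{Re}(\lambda_{4}) = -1/\tau + O(|\xi|^{2})$.

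For the three eigenvalues emerging from the triple zero I would perform a Newton--Puiseux analysis on $p$. The lower face of the Newton polygon exhibits two distinct balances: $\lambda^{3}/\tau \leftrightarrow s\lambda/\tau$ forces two roots of order $\sqrt{s}$ whose leading coefficients solve $\mu^{2} + 1 = 0$, hence $\mu = \pm i$, while $s\lambda/\tau \leftrightarrow s^{2}/\tau$ forces one root of order $s$ with leading coefficient $-1$. I would therefore adopt the ansatz
$$\lambda_{1,2}(s) = \pm i\sqrt{s} + c\, s + O(s^{3/2}), \qquad \lambda_{3}(s) = -s + O(s^{3/2}),$$
with $\lambda_{1}$ and $\lambda_{2}$ forming a complex-conjugate pair (so $c \in \mathbb{R}$).

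Finally, I would pin down $c$ by matching Vieta's formulas to two coefficients of $p$. From the coefficient of $\lambda^{3}$ one obtains the trace identity $a + 2c - 1 = -1$, so $c = -a/2$. From the coefficient of $\lambda^{2}$ one obtains $1 + \tfrac{1}{\tau} - \tfrac{2c}{\tau} = \tfrac{1+\beta}{\tau}$, which solves to $c = -(\beta - \tau)/2$. Combined with the previous steps, this delivers $\func{Re}(\lambda_{1,2}) = -\tfrac{\beta - \tau}{2}|\xi|^{2} + O(|\xi|^{3})$ and $\func{Re}(\lambda_{3}) = -|\xi|^{2} + O(|\xi|^{3})$, as claimed. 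The main obstacle is rigorously justifying the fractional-power Puiseux structure (in particular the $\sqrt{s}$ scaling of two of the small roots), which can either be handled by appealing to the Newton--Puiseux theorem applied to $p(\lambda,s)$ or, more concretely, by substituting the ansatz and verifying solvability order by order; once the ansatz is in hand, the identification of $c$ via Vieta is routine algebra.
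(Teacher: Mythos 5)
Your proposal is correct and follows essentially the same route as the paper: compute the characteristic polynomial of $\Psi(\xi)$ and expand its four roots near the origin (your quartic in $s=|\xi|^2$ is exactly the paper's polynomial \eqref{charpol_Fourier} rewritten via $\zeta^2=-s$, and your coefficients $c=-(\beta-\tau)/2$ for the conjugate pair and $-1$ for $\lambda_3$ reproduce \eqref{Eg_Exp_Four_0}). The only difference is presentational: the paper parametrizes by $\zeta=i|\xi|$, in which the branches have integer-power expansions that are simply stated, whereas your parametrization by $s$ forces the $\sqrt{s}$ Puiseux structure, which you then justify via the Newton polygon and pin down with Vieta's formulas — a more explicit derivation of the same expansions.
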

\begin{proof}

The characteristic polynomial of \eqref{ODE_Fourier} can be computed in terms of $\zeta=i|\xi|$ as
\begin{eqnarray}\label{Chara_Pol_Fourier}
p_{\mathrm{F}}(\lambda)=\det( L-\zeta^2A-\lambda Id),
\end{eqnarray}
where
\begin{eqnarray}\label{charpol_Fourier}
&& \tau\det( L-\zeta^2A-\lambda Id)=\\
\nonumber && \tau\lambda^4+\left(1-\tau\zeta^2\right)\lambda^3+\left( \tau\eta^2\zeta^2-\beta-1\right)\zeta^2\lambda^2 +
\left( (\eta^2+\beta)\zeta^2-1\right)\zeta^2\lambda +{\zeta^4}
\end{eqnarray}
Let us denote by $\lambda_j(\zeta)$, $j=1,\ldots,4$, the roots of \eqref{charpol_Fourier}. We can compute their asymptotic expansions $$\lambda_j(\zeta) =\lambda_j^0+\lambda_j^1\zeta+\lambda_j^2\zeta^2+\ldots,\ j=1,\ldots,4,\ \textrm{ for } \zeta\to 0$$
as
\begin{eqnarray*}
  \lambda_{1,2}(\zeta) &=& \pm \zeta -\frac{\tau-\beta}{2}\zeta^2+O(\zeta^3) \\
  \lambda_{3}(\zeta) &=& \zeta^2+O(\zeta^3) \\
  \lambda_{4}(\zeta) &=& -\frac{1}{\tau}+O(\zeta^2).
\end{eqnarray*}
Consequently, as $\zeta=i|\xi|$, we obtain  the  asymptotic behavior \eqref{Eg_Exp_Four_0} for the real parts of the previous eigenvalues when $|\xi|\to 0$.
Observe that when $0<\tau<\beta$ (dissipative case for the standard linear model) all the  real parts  in \eqref{Eg_Exp_Four_0} are negative.
\end{proof}
\begin{lemma}\label{lemma:eiginfty_Fourier1}
Assume that $0<\tau<\beta$.  Then the  real parts of the eigenvalues of the matrix $\Psi(\xi)$ defined in \eqref{ODE_Fourier} has the following expansion as $|\xi|\rightarrow \infty$:
\begin{equation}\label{Eg_Exp_Four_infty}
\left\{
\begin{array}{ll}
 \func{Re}(\lambda_{1})(\xi) &= -\dfrac{\beta+\eta^2-\sqrt{(\beta+\eta^2)^2-4\tau\eta^2}}{2\tau\eta^2}+O(|\xi|^{-1}) \vspace{0.2cm}\\
  \func{Re}(\lambda_{2})(\xi) &= -\dfrac{\beta+\eta^2+\sqrt{(\beta+\eta^2)^2-4\tau\eta^2}}{2\tau\eta^2}+O(|\xi|^{-1}) \vspace{0.2cm}\\
  \func{Re}(\lambda_{3})(\xi) &= -\dfrac{1-\sqrt{1-4\eta^2}}{2}|\xi|^2+O(|\xi|) \vspace{0.2cm}\\
  \func{Re}(\lambda_{4})(\xi) &= -\dfrac{1+\sqrt{1-4\eta^2}}{2}|\xi|^2+O(|\xi|).
 \end{array}
\right.
\end{equation}
In particular, all of them are negative.
\end{lemma}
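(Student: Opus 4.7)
The plan is to substitute $\zeta^{2}=-|\xi|^{2}$ into the characteristic polynomial \eqref{charpol_Fourier} and perform an asymptotic analysis as $|\xi|\to\infty$, splitting the four roots into two groups via a Newton-polygon / dominant-balance argument. Writing $r:=|\xi|$ and collecting the degrees $(\deg_{\lambda},\deg_{r})$ of each monomial, the upper convex envelope has a horizontal segment of length two together with two segments of slope $-2$ of length one each; this predicts two bounded roots of order $O(1)$ and two unbounded ones of order $O(r^{2})$. Once each scaling is isolated, I would apply the implicit function theorem to the resulting (simple) leading-order equation in the small variable $1/r$ to promote the formal expansions into genuine asymptotic ones with remainders of the orders claimed in \eqref{Eg_Exp_Four_infty}.

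For the two bounded branches I would insert $\lambda=\mu+O(1/r)$ and keep only the $r^{4}$-coefficients, which dominate when $\lambda$ is bounded. Balancing them yields the quadratic
\begin{equation*}
\tau\eta^{2}\mu^{2}+(\eta^{2}+\beta)\mu+1=0,
\end{equation*}
whose two roots $\mu_{\pm}=\bigl(-(\eta^{2}+\beta)\pm\sqrt{(\eta^{2}+\beta)^{2}-4\tau\eta^{2}}\bigr)/(2\tau\eta^{2})$ are strictly negative (their sum is negative and their product positive). They are real and distinct under the standing assumption $0<\tau<\beta$: regarded as a quadratic in $\eta^{2}$, the expression $(\eta^{2}+\beta)^{2}-4\tau\eta^{2}$ has discriminant $16\tau(\tau-\beta)<0$, hence is strictly positive for every $\eta>0$. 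This yields the first two lines of \eqref{Eg_Exp_Four_infty}.

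For the unbounded branches I would substitute $\lambda=\nu r^{2}+O(r)$ and balance the $r^{8}$-terms, obtaining $\nu^{2}(\nu^{2}+\nu+\eta^{2})=0$. Discarding $\nu=0$, which merely recovers the bounded branches already treated, the nontrivial solutions are $\nu_{\pm}=(-1\pm\sqrt{1-4\eta^{2}})/2$: real and lying in $(-1,0)$ when $4\eta^{2}<1$, and a complex-conjugate pair with $\func{Re}(\nu_{\pm})=-1/2$ when $4\eta^{2}>1$. In both regimes the leading-order expression for $\func{Re}(\lambda_{3,4})$ stated in \eqref{Eg_Exp_Four_infty} is correct and strictly negative. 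The main delicate point I foresee is the degenerate value $4\eta^{2}=1$, where the leading quadratic has a double root and a Puiseux rather than Taylor expansion would be required; the leading term $-r^{2}/2$ remains correct in that limiting case, so the final conclusion that all four real parts are strictly negative is unaffected.
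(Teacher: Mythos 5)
Your proposal is correct and takes essentially the same approach as the paper: the paper's rescaling $\mu=\nu^{2}\lambda$ with $\nu=(i|\xi|)^{-1}$ is precisely the device that separates the two groups of roots your Newton-polygon argument identifies, and your two leading-order quadratics reproduce the paper's limiting values for $\lambda_{1,2}$ and for $\lambda_{3,4}/|\xi|^{2}$. Your extra verifications --- strict positivity of $(\eta^{2}+\beta)^{2}-4\tau\eta^{2}$ via the discriminant $16\tau(\tau-\beta)<0$ under $0<\tau<\beta$, and the discussion of the degenerate value $4\eta^{2}=1$ --- are correct and go slightly beyond what the paper records.
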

\begin{proof}
We now proceed with the asymptotic behaviour of the eigenvalues when $|\xi|\to\infty$. Following Kawashima et al. (\cite{IHK08}) and taking $\nu=\zeta^{-1}=(i|\xi|)^{-1}$, we write equation \eqref{ODE_Fourier} as
\begin{equation}\label{ODE_Fourier_infty}
  \hat{U}_t= \nu^{-2}\left(L\nu^{2}- A\right)U.
\end{equation}
We can now compute $\det(L\nu^{2}- A-\mu Id)$, the characteristic polynomial of equation \eqref{ODE_Fourier_infty}, as:
\begin{eqnarray}\label{charpol_Fourier_infty}
&& \tau\det(L\nu^{2}- A-\mu Id)= \\
\nonumber && \tau\mu^4 + \left({\nu^2}-\tau\right)\, \mu^3 + \left(\eta^2\tau-(\beta+1)\nu^2\right) \mu^2 + (\eta^2+\beta-\nu^2)\nu^2 \mu + \nu^4
\end{eqnarray}
Observe that if $\lambda(\xi)$ is an eigenvalue \eqref{ODE_Fourier}, then $$\mu(\nu)=\nu^2\lambda=\zeta^{-2}\lambda=-|\xi|^{-2}\lambda$$
is an eigenvalue of \eqref{ODE_Fourier_infty}.

We can now compute the asymptotic expansion
$$\mu_j(\nu) =\mu_j^0+\mu_j^1\nu+\mu_j^2\nu^2+\mu_j^3\nu^3+\mu_j^4\nu^4+\ldots,\  j=1,\ldots,4$$
of the roots of the characteristic polynomial \eqref{charpol_Fourier_infty} when $\nu\to 0$ (that is, when $|\xi|\to\infty$), to get

\begin{eqnarray*}
  \mu_{1}(\nu) &=& -\frac{\beta-\sqrt{(\beta+\eta^2)^2-4\tau\eta^2}+\eta^2}{2\tau\eta^2}\nu^2+O(\nu^3), \\
  \mu_{2}(\nu) &=& -\frac{\beta+\sqrt{(\beta+\eta^2)^2-4\tau\eta^2}+\eta^2}{2\tau\eta^2}\nu^2+O(\nu^3), \\
  \mu_{3}(\nu) &=& \frac{1-\sqrt{1-4\eta^2}}{2}+O(\nu), \\
  \mu_{4}(\nu) &=& \frac{1+\sqrt{1-4\eta^2}}{2}+O(\nu).
\end{eqnarray*}
Consequently, as $$\lambda_j(\xi) =-\mu_j^0|\xi|^2+\mu_j^1 i|\xi|+\mu_j^2-\mu_j^3 i|\xi|^{-1}-\mu_j^4 |\xi|^{-2}+O(|\xi|^{-3}),\quad j=1,\ldots,4,$$ we have the asymptotic behavior \eqref{Eg_Exp_Four_infty} for the real parts of the previous eigenvalues when $|\xi|\to \infty$.
\begin{eqnarray*}
  \func{Re}(\lambda_{1})(\xi) &=& -\frac{\beta+\eta^2-\sqrt{(\beta+\eta^2)^2-4\tau\eta^2}}{2\tau\eta^2}+O(|\xi|^{-1}), \\
  \func{Re}(\lambda_{2})(\xi) &=& -\frac{\beta+\eta^2+\sqrt{(\beta+\eta^2)^2-4\tau\eta^2}}{2\tau\eta^2}+O(|\xi|^{-1}), \\
  \func{Re}(\lambda_{3})(\xi) &=& -\frac{1-\sqrt{1-4\eta^2}}{2}|\xi|^2+O(|\xi|), \\
  \func{Re}(\lambda_{4})(\xi) &=& -\frac{1+\sqrt{1-4\eta^2}}{2}|\xi|^2+O(|\xi|).
\end{eqnarray*}
Observe that in all cases we have $\func{Re}(\lambda_{j})(\xi)<0$, $j=1,\ldots,4$.

\end{proof}

\subsection{The necessary conditions for asymptotic stability for $\eta\geq 0$.}\label{stabFourier_eta0}
First, assume that $\eta=0$ in \eqref{Rivera_Model_Cattaneo}. That is, we want to deal with the Cauchy problem for the standard linear solid model without any heat coupling (also called Moore-Gibson-Thompson equation). As we have explained in Section \ref{Introduction}, this model was studied by the same authors of the present paper in \cite{PellSaid_2016} where, in particular, the condition \eqref{condition} was only proved to be a sufficient condition for the asymptotic stability of that problem. The same had been proven for the standard linear problem in a bounded domain (see \cite{KatLasPos_2012}, for instance). In this section our goal is to prove that, indeed, $0<\tau<\beta$ is also a necessary condition for the asymptotic stability of the problem when $\eta=0$, which represents a novelty for this problem as we explained in the introduction.  This result is also important in comparison with the corresponding ones for the model with heat conduction. Actually, and using the same techniques, we see that  $0<\tau\leq \beta$ is a  sufficient (but not necessary) condition for the asymptotic stability for the model with heat conduction under the Fourier law.  For the Cattaneo case, the same techniques do not add information on this, but we see in Sections \ref{stabCattaneo_1} and \ref{stabCattaneo_2} that $\tau\leq \beta$  is  sufficient condition for the stability of the Cattaneo problem. 

First, we want to give conditions for the real parts of the eigenvalues of \eqref{Rivera_Model_Cattaneo} when $\eta=0$ to be negative. Hence, we first write the characteristic polynomial of the matrix $\Phi=(L+\xi^2 A)$ in \eqref{ODE_Fourier} when $\eta=0$, that is
\begin{eqnarray}\label{Characteristic_2}
p_0(\lambda)&=&
a_0\lambda^4+a_1\lambda^3+a_2\lambda^2+a_3\lambda+a_4
\end{eqnarray}
where
\begin{eqnarray*}
&&a_0=\tau,\quad a_1=1+\tau|\xi|^2,\quad a_2=\left( \beta+1\right)|\xi|^2,\quad a_3=\left( \beta|\xi|^2+1\right)|\xi|^2,\quad a_4=|\xi|^4
\end{eqnarray*}
(see also \eqref{charpol_Fourier} for $\eta=0$). Now, as all $a_i>0$ for $i=0,\ldots, 4$, we apply the Routh--Hurwitz theorem (see \cite[p. 459]{Lavrentiev_1972}) that ensures that all
the roots of the polynomial $p_0(\lambda)$ have negative real part if and only if all the leading  minors of  the matrix
\begin{equation*}
\left(
\begin{array}{cccc}
 a_1 & a_0 & 0 & 0 \\
 a_3 & a_2 & a_1 & a_0 \\
 0 & a_4 & a_3 & a_2 \\
 0 & 0 & 0 & a_4 \\
\end{array}
\right)
\end{equation*}
are strictly positive. That is, the following determinants should be strictly positive:
\begin{eqnarray*}
&&A_1=a_1,\quad A_{2}=\det \left(
\begin{array}{cc}
a_{1} & a_{0} \\
a_{3} & a_{2}%
\end{array}%
\right),\quad A_{3} =\det \left(
\begin{array}{ccc}
a_{1} & a_{0} & 0 \\
a_{3} & a_{2} & a_{1} \\
0 & a_{4} & a_{3}%
\end{array}%
\right),\quad A_{4}=\det \left(
\begin{array}{cccc}
a_{1} & a_{0} & 0 & 0 \\
a_{3} & a_{2} & a_{1} & a_{0} \\
0 & a_{4} & a_{3} & a_2 \\
0 & 0 & 0 & a_{4}%
\end{array}%
\right)
\end{eqnarray*}
In our case, we have
\begin{eqnarray*}
&&A_1=1+\tau|\xi|^2\\
&&A_2=|\xi |^2 \left( |\xi |^2 \tau +\beta-\tau +1\right)\\
&&A_3= |\xi|^4(\beta-\tau)\left( \tau |\xi|^4+(\beta +1)|\xi|^2+1\right) \\
&&A_4=|\xi|^4 A_3.
\end{eqnarray*}
Hence, the condition $0<\tau<\beta$ is a necessary and sufficient condition to have $A_3>0$. According to the Routh-Hurwitz theorem mentioned above, this means that all the eigenvalues have strictly negative real part and, therefore, we conclude that the problem is stable if and only if $0<\tau<\beta$.

\begin{remark}
Observe that if we write \eqref{Characteristic_2} for $\beta=\tau$ we have the following characteristic polynomial
\begin{eqnarray*}
\left(\lambda +|\xi| ^2\right) \left(\lambda ^2+|\xi| ^2\right) (\lambda  \tau +1).
\end{eqnarray*}
In this case, the eigenvalues of the matrix $\Phi$ are
\begin{eqnarray*}
\lambda_1=-|\xi|^2,\quad \lambda_2=-\frac{1}{\tau},\quad \text{and}\quad \lambda_{3,4}=\pm i|\xi|.
\end{eqnarray*}
Hence, as $\func{Re}(\lambda_{3,4})=0$, we do not have asymptotic stability in this case, which is in agreement with the above result.
\end{remark}
We can apply the same technique to see what happens for the eigenvalues of the standard linear solid model with Fourier heat conduction (that is, \eqref{Rivera_Model_Cattaneo} with $\eta>0$ and $\tau_0=0$). From \eqref{charpol_Fourier} we have that the characteristic polynomial of this problem is:
\begin{eqnarray*}
p_{\mathrm{F}}(\lambda)&=&
a_0\lambda^4+a_1\lambda^3+a_2\lambda^2+a_3\lambda+a_4
\end{eqnarray*}
where now
\begin{eqnarray*}
&&a_0=\tau,\quad a_1=1+\tau|\xi|^2,\quad a_2=\left(\tau \eta^2|\xi|^2+ \beta+1\right)|\xi|^2,\\ &&a_3=\left((\eta^2+ \beta)|\xi|^2+1\right)|\xi|^2,\quad a_4=|\xi|^4.
\end{eqnarray*}
As $a_i>0$ for all $i=0,\ldots,4$, we can apply the Routh-Hurwitz Theorem again. The $A_i$ determinants for the case $\eta>0$ are
\begin{eqnarray*}
&&A_1=1+\tau |\xi|^2\\
&&A_2=|\xi |^2 \left( \eta^2\tau^2|\xi|^4+|\xi |^2 \tau +\beta-\tau +1\right)\\
&&A_{3} =\Big[ (\eta ^{2}+\beta )\eta ^{2}\tau ^{2}|\xi |^{6}+(\eta
^{2}\tau +\eta ^{2}+\beta -\tau )\tau |\xi |^{4}\Big.  \\
&&\Big. \hspace{0.8cm}+\left( (\beta -\tau )(1+\beta +\eta ^{2})+\eta ^{2}\right) |\xi
|^{2}+(\beta -\tau )\Big] |\xi |^{4} \\
&&A_4=|\xi|^4 A_3.
\end{eqnarray*}
Observe that, now, $0<\tau\leq \beta$ is a sufficient condition to have $A_1,A_2,A_3,A_4>0$ and, hence, all the eigenvalues with strictly negative real part, but it is not a necessary one. Therefore, $0<\tau\leq \beta$ is just a sufficient condition for the asymptotic stability of the standard linear problem with Fourier heat conduction. This completes the stability results given in Sections \ref{stabFourier_1} and \ref{stabFourier_3}. For the Cattaneo model, the Routh-Hurwitz criterion does not give us any information.

\subsection{Stability results: the case $\tau=\beta$} \label{stabFourier_3}
We investigate now the case $\tau=\beta$. As we have seen in the previous Section \ref{stabFourier_eta0}, in the absence of the heat conduction the condition $0<\tau<\beta$ is a necessary and sufficient condition for stability. As we have also seen in Section \ref{stabFourier_eta0}, in the presence of the heat conduction the stability happens both when $0<\tau<\beta$ and $\tau=\beta$, although these are only sufficient conditions.  In the present section we give the decay rate of a norm related to the solution when $\tau=\beta$, that agrees with the fact of having asymptotic stability also when $\tau=\beta$. This is important because, a priori, this case seems more delicate compared to the case $0<\tau<\beta$ since the only damping for the case $\tau=\beta$ is coming from the heat conduction. The consequence of this fact is still a decay but with a slower decay rate. In fact, we show that an $L^2$-norm related to the solution decays as $(1+t)^{-N/8}$, instead of the decay $(1+t)^{-N/4}$ for the case $0<\tau<\beta$ (see Theorem \ref{Decay_Second_system} and Theorem \ref{Decay_Fourier_2} below).

\subsubsection{Lyapunov functional}
In this section, we apply the energy method in the Fourier space to obtain the decay rate of the $L^2$-norm of $\WF(x,t)=(\tau u_{tt}+u_t,\nabla (\tau u_t+u), \theta)^T(x,t)$, where $(u,\theta)$ is the solution of (\ref{Main_system}) when $\tau=\beta$. As we said above, this is the main result of this section and it is summarized in the following theorem.
\begin{theorem}\label{Decay_Fourier_2}
Assume that $\beta=\tau $. Let $\WF(x,t)=(\tau u_{tt}+u_t,\nabla (\tau u_t+u), \theta)^T(x,t)$, where $(u,\theta)$ is the solution of (\ref{Main_system}). Let $s$ be a nonnegative integer and let $\WF^{0}=\WF(x,0)\in
H^{s}\left(
\mathbb{R}^N
\right) \cap L^{1}\left(
\mathbb{R}^N
\right) .$ Then, the following decay estimate
\begin{equation*}\label{Main_estimate_Theorem_2}
\left\Vert \nabla^{k}\WF\left( t\right) \right\Vert _{L^{2}(\R^N)}\leq  C(1+t)^{-N/8-k/4}\left\Vert \WF^0 \right\Vert _{L^{1}(\R^N)}+Ce^{-ct}\left\Vert \nabla^{k}\WF^0 \right\Vert _{L^{2}(\R^N)},
\end{equation*}
holds,
for any $t\geq 0$ and $0\leq k\leq s$, where $C$ and $c$ are two positive constants independent of $t$ and $\WF^0$.
\end{theorem}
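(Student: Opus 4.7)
The plan is to follow the strategy of Section \ref{sec:FourierFunctional} but with a crucially different Lyapunov functional tailored to the degeneracy at $\beta=\tau$. As in Proposition \ref{Main_Lemma}, I would aim for a pointwise estimate of the form $|\hat{\WF}(\xi,t)|^2\leq Ce^{-c\vrF(\xi)t}|\hat{\WF}(\xi,0)|^2$, where now the natural weight is $\vrF(\xi)=|\xi|^4/(1+|\xi|^2)^2$: it behaves like $|\xi|^4$ near the origin (yielding the slower $(1+t)^{-N/8-k/4}$ rate) and like a positive constant at infinity (yielding the exponential high-frequency decay). Given such a pointwise estimate, the $L^2$ decay follows exactly as in the proof of Theorem \ref{Decay_Second_system}, using the elementary low-frequency calculation $\int_{|\xi|\leq 1}|\xi|^{j}e^{-c|\xi|^{4}t}d\xi\leq C(1+t)^{-(j+N)/4}$ in place of the quadratic analogue.

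The crucial observation is that, when $\beta=\tau$, the system \eqref{System_New_2} admits a closed thermoelastic-like reduction in the variables $U=\hat{u}+\tau\hat{v}$ and $V=\hat{v}+\tau\hat{w}$:
\begin{equation*}
U_{t}=V,\qquad V_{t}+|\xi|^{2}U=\eta|\xi|^{2}\hat{\theta},\qquad \hat{\theta}_{t}+|\xi|^{2}\hat{\theta}+\eta|\xi|^{2}V=0.
\end{equation*}
Here the wave--heat coupling enters through $|\xi|^{2}$ rather than through $i\xi$ as in classical thermoelasticity, which accounts both for the weakened dissipation at low frequencies and for the disappearance of $\hat{v}$ from the vector $\WF$ (since $\hat{v}$ individually is not dissipated in this closed subsystem). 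The natural energy $\hat{\EF}(\xi,t)=\tfrac{1}{2}(|V|^{2}+|\xi|^{2}|U|^{2}+|\hat{\theta}|^{2})$ is equivalent to $|\hat{\WF}(\xi,t)|^{2}$ and satisfies $\tfrac{d}{dt}\hat{\EF}=-|\xi|^{2}|\hat{\theta}|^{2}$, producing only thermal dissipation.

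To recover dissipation in $|\xi|^{2}|U|^{2}$ and $|V|^{2}$, I would use the wave multiplier $\F_{1}=\func{Re}(U\bar{V})$, which yields $\F_{1}'+|\xi|^{2}|U|^{2}=|V|^{2}+\eta|\xi|^{2}\func{Re}(U\bar{\hat{\theta}})$, together with the heat--wave cross-multiplier $\F_{2}=\eta\func{Re}(V\bar{\hat{\theta}})$, which (using both the $V$ and $\hat{\theta}$ equations) gives
\begin{equation*}
\F_{2}'+\eta^{2}|\xi|^{2}|V|^{2}=-\eta|\xi|^{2}\func{Re}(U\bar{\hat{\theta}})+\eta^{2}|\xi|^{2}|\hat{\theta}|^{2}-\eta|\xi|^{2}\func{Re}(V\bar{\hat{\theta}}).
\end{equation*}
Note that this $\F_{2}$ replaces the one used in Lemma \ref{Lemma_F_2}, which relied precisely on the $-(\beta-\tau)|\xi|^{2}|\hat{v}|^{2}$ term no longer available. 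I would then combine them into
\begin{equation*}
\cLF(\xi,t)=N\,\hat{\EF}(\xi,t)+\frac{|\xi|^{4}}{(1+|\xi|^{2})^{2}}\,\F_{1}(\xi,t)+\gamma\,\frac{|\xi|^{2}}{1+|\xi|^{2}}\,\F_{2}(\xi,t),
\end{equation*}
choose $\gamma$ large enough to absorb the $|V|^{2}$ from $\F_{1}'$ by the $-\eta^{2}|\xi|^{2}|V|^{2}$ from $\F_{2}'$ (the weights are engineered so that both contributions are of order $|\xi|^{4}$ for small $|\xi|$), and then $N$ much larger to absorb via Young's inequality the cross terms $\func{Re}(U\bar{\hat{\theta}})$, $\func{Re}(V\bar{\hat{\theta}})$ into the $|\xi|^{2}|\hat{\theta}|^{2}$, $|\xi|^{2}|U|^{2}$ and $|V|^{2}$ dissipations. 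The resulting inequality $\tfrac{d}{dt}\cLF+c\tfrac{|\xi|^{4}}{(1+|\xi|^{2})^{2}}\hat{\EF}\leq 0$, together with $\cLF\sim\hat{\EF}$ and Gronwall, closes the argument.

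The main technical obstacle is the calibration of the weights in $\cLF$. Because the $(\beta-\tau)|\xi|^{2}|\hat{v}|^{2}$ dissipation that was decisive in the case $0<\tau<\beta$ has vanished, one can no longer use a uniform $|\xi|^{2}/(1+|\xi|^{2})$ weight on both auxiliary functionals: the positive $|V|^{2}$ term in $\F_{1}'$ has no $|\xi|^{2}$ factor, whereas the negative $-\eta^{2}|\xi|^{2}|V|^{2}$ produced by $\F_{2}'$ carries one. Making the first absorbable by the second forces giving $\F_{1}$ an extra $|\xi|^{2}/(1+|\xi|^{2})$ weight relative to $\F_{2}$, and this asymmetry is precisely what downgrades the effective dissipation rate from $|\xi|^{2}/(1+|\xi|^{2})$ to $|\xi|^{4}/(1+|\xi|^{2})^{2}$, matching the asymptotic behaviour one would expect from the eigenvalues of the reduced $(U,V,\hat{\theta})$ system at the origin.
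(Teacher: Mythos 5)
Your proposal is correct and follows essentially the same route as the paper: the same energy identity $\frac{d}{dt}\hat{\EcF}=-|\xi|^2|\hat\theta|^2$, the same wave multiplier $\F_1$, and the same heat--wave cross functional (your $\F_2$ is the paper's $\F_3=\func{Re}(\bar{\hat\theta}(\hat v+\tau\hat w))$ up to the factor $\eta$), combined into a weighted Lyapunov functional that yields the pointwise bound with weight $|\xi|^4/(1+|\xi|^2)^2$, which is equivalent to the paper's $|\xi|^4/(1+|\xi|^2+|\xi|^4)$. The only cosmetic difference is normalization: you keep $\cLF\sim\hat{\EcF}$ by putting the $(1+|\xi|^2)$-weights on the auxiliary functionals, whereas the paper multiplies the energy by $(1+|\xi|^2+|\xi|^4)$ and divides through at the end.
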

The proof of this theorem is a combination of Proposition \ref{Prop_beta_tau_2} (that we state and prove below) and the proof of Theorem \ref{Decay_Second_system}.
Hence, we will omit the details of the proof of Theorem \ref{Decay_Fourier_2}.

\begin{proposition}\label{Prop_beta_tau_2}
Assume that $\tau=\beta$ and let  $({u},{\theta})$ be the solution of \eqref{Main_system}. Then for $\WF(x,t)=(\tau u_{tt}+u_t,\nabla (\tau u_t+u)
,\theta)^T(x,t)$ and for all $t\geq 0$, we have
\begin{equation}\label{U_Inequality_2}
|\hat{\WF}(\xi,t)|^2\leq Ce^{-c\vrF(\xi)t}|\hat{\WF}(\xi,0)|^2,
\end{equation}
where $$\vrF(\xi)=\frac{|\xi|^4}{(1+|\xi|^2+|\xi|^4)}$$ and $C$ and $c$ are two positive constants.
\end{proposition}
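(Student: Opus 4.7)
The approach mirrors the proof of Proposition \ref{Main_Lemma} but adapts to the degenerate case $\tau=\beta$, in which the energy identity loses its $|\hat v|^2$ dissipation. I will work with the reduced energy
\[
\hat{\mathcal{E}}(\xi,t):=\tfrac{1}{2}\bigl(|\hat v+\tau\hat w|^2+|\xi|^2|\hat u+\tau\hat v|^2+|\hat\theta|^2\bigr)=\tfrac{1}{2}|\hat{\WF}(\xi,t)|^2,
\]
which drops the $\tau(\beta-\tau)|\xi|^2|\hat v|^2$ summand present in \eqref{Energy_functional}. Since $\tau=\beta$ simplifies \eqref{Sum_Equations} to $(\hat v+\tau\hat w)_t=-|\xi|^2(\hat u+\tau\hat v)+\eta|\xi|^2\hat\theta$, a computation analogous to Lemma \ref{dissipa_Energy} (in which all cross terms cancel) yields
\[
\tfrac{d}{dt}\hat{\mathcal{E}}(\xi,t)=-|\xi|^2|\hat\theta|^2,
\]
so the only direct dissipation now acts on the temperature.

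The functional $\F_1$ of Lemma \ref{Lemma_F_1} is still useful: the $|\xi|^2(\tau-\beta)\func{Re}(\hat v(\bar{\hat u}+\tau\bar{\hat v}))$ term in \eqref{F_1_main} vanishes identically at $\tau=\beta$, and Young's inequality yields
\[
\tfrac{d}{dt}\F_1(\xi,t)+(1-\epsilon_0)|\xi|^2|\hat u+\tau\hat v|^2\leq |\hat v+\tau\hat w|^2+C(\epsilon_0)|\xi|^2|\hat\theta|^2.
\]
However, the functional $\F_2$ of Lemma \ref{Lemma_F_2} is no longer admissible, since its correction proportional to $\tau(\beta-\tau)|\xi|^2|\hat v|^2$ cannot be dominated by the remaining dissipation. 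I replace it by
\[
\F_3(\xi,t):=\func{Re}\bigl((\hat v+\tau\hat w)\bar{\hat\theta}\bigr),
\]
whose time derivative, computed from the simplified $(\hat v+\tau\hat w)_t$ equation and \eqref{Fourth_equation_system_Fourier}, equals
\[
\tfrac{d}{dt}\F_3+\eta|\xi|^2|\hat v+\tau\hat w|^2=-|\xi|^2\func{Re}\bigl(\hat\theta(\bar{\hat u}+\tau\bar{\hat v})\bigr)-|\xi|^2\func{Re}\bigl(\hat\theta(\bar{\hat v}+\tau\bar{\hat w})\bigr)+\eta|\xi|^2|\hat\theta|^2.
\]
The delicate step is to apply Young's inequality to the cross term involving $\hat u+\tau\hat v$ with the $|\xi|^2$ weight loaded asymmetrically onto the $\hat u+\tau\hat v$ factor, so that its bad contribution carries a $|\xi|^4$ coefficient rather than a $|\xi|^2$ one; this leads to
\[
\tfrac{d}{dt}\F_3+\tfrac{\eta}{2}|\xi|^2|\hat v+\tau\hat w|^2\leq \epsilon\,|\xi|^4|\hat u+\tau\hat v|^2+C(\epsilon)\bigl(|\hat\theta|^2+|\xi|^2|\hat\theta|^2\bigr).
\]

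The Lyapunov functional is then
\[
\LF(\xi,t):=\gamma_0\,\hat{\mathcal{E}}(\xi,t)+c_1\,\frac{|\xi|^4}{1+|\xi|^2+|\xi|^4}\,\F_1(\xi,t)+c_3\,\frac{|\xi|^2}{1+|\xi|^2+|\xi|^4}\,\F_3(\xi,t),
\]
and the main obstacle of the proof is to choose the $|\xi|$-dependent weights so that all cross terms balance uniformly in $|\xi|$, especially near the origin, where the direct dissipation $|\xi|^2|\hat\theta|^2$ degenerates. The exponents are forced by two scaling matches: the bad term $|\hat v+\tau\hat w|^2$ from $\F_1$, once multiplied by its weight, and the good term $\tfrac{\eta}{2}|\xi|^2|\hat v+\tau\hat w|^2$ from $\F_3$, once multiplied by its weight, both produce the common scaling $|\xi|^4/(1+|\xi|^2+|\xi|^4)$, so the requirement $c_3\eta/2>c_1$ settles them; symmetrically, the two $|\hat u+\tau\hat v|^2$ contributions share the scaling $|\xi|^6/(1+|\xi|^2+|\xi|^4)$ and are balanced by taking $\epsilon,\epsilon_0$ small. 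Choosing $\gamma_0$ sufficiently large to dominate the remaining $|\hat\theta|^2$ bad terms (each of which is bounded, after the weighting, by a constant multiple of $|\xi|^2|\hat\theta|^2$), and noting that both auxiliary weights are bounded by $1$ so that the equivalence $\gamma_3\hat{\mathcal{E}}\leq\LF\leq\gamma_4\hat{\mathcal{E}}$ holds, I obtain
\[
\tfrac{d}{dt}\LF(\xi,t)+c\,\vrF(\xi)\,\LF(\xi,t)\leq 0.
\]
Gronwall's lemma, combined with the equivalence $\LF\sim|\hat{\WF}|^2$, then yields the pointwise estimate \eqref{U_Inequality_2}.
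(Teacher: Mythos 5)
Your proposal is correct and follows essentially the same route as the paper: the reduced energy with $\tfrac{d}{dt}\hat{\EcF}=-|\xi|^2|\hat\theta|^2$, the functional $\F_1$ (whose $(\tau-\beta)$ cross term vanishes), the coupling functional $\F_3=\func{Re}(\bar{\hat\theta}(\hat v+\tau\hat w))$ to recover dissipation of $|\hat v+\tau\hat w|^2$, and a weighted Lyapunov functional. The only difference is cosmetic: you divide the whole functional by $1+|\xi|^2+|\xi|^4$ (weights $\tfrac{|\xi|^4}{1+|\xi|^2+|\xi|^4}$ and $\tfrac{|\xi|^2}{1+|\xi|^2+|\xi|^4}$ on $\F_1,\F_3$ with an unweighted energy), whereas the paper multiplies the energy by $1+|\xi|^2+|\xi|^4$ and uses weights $|\xi|^4$ and $|\xi|^2$; the resulting decay exponent $\vrF$ is identical.
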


\begin{proof}[Proof of Proposition \ref{Prop_beta_tau_2}]

In order to prove Proposition \ref{Prop_beta_tau_2}, we need to  find the appropriate Lyapunov functional under the assumption $\beta=\tau$. In this case \eqref{dE_dt_1}
becomes
\begin{equation}\label{dE_dt_1_2}
\frac{d}{dt}\hat{\EcF}(\xi,t)=-|\xi|^2|\hat{\theta}|^2,
\end{equation}
where
\begin{eqnarray*}\label{Energy_Four_2}
\hat{\EcF}(\xi,t):=\frac{1}{2}\left\{ | \hat{v}+\tau\hat{w}|^2+|\xi|^2|\hat{u}+\tau\hat{v}|^2
+|\hat{\theta}|^2\right\}. 
\end{eqnarray*}

First, for $\tau=\beta$, the first term of the right-hand-side in \eqref{F_1_main} vanishes and we have (instead of \eqref{dF_1_dt}) the estimate
\begin{eqnarray}\label{dF_1_dt_tau_2}
\frac{d}{dt}\F_1(\xi,t)+(1-\epsilon_0)|\xi|^2|\hat{u}+\tau \hat{v}|^2
\leq  |\hat{v}+\tau \hat{w}|^2+C(\epsilon_0)|\xi|^2|\hat{\theta}|^2.
\end{eqnarray}
Now to get a dissipative term for $|\hat{v}+\tau\hat{w}|$ we rewrite system \eqref{Eq_Fourier} as (since $\beta=\tau$)
\begin{equation}\label{Eq_Fourier_2}
\left\{\begin{array}{l}
(\hat{v}+\tau\hat{w})_t+|\xi|^2 (\hat{u}+\tau  \hat{v})-\eta |\xi|^2 \hat{\theta} =0,\vspace{0.2cm}\\
\hat{\theta} _{t}+|\xi|^2 \hat{\theta}+\eta|\xi|^2  (\hat{v}+\tau\hat{w})=0.
\end{array}\right.
\end{equation}
We multiply the first equation in \eqref{Eq_Fourier_2} by $\bar{\hat{\theta}}$ and the second equation by $(\bar{\hat{v}}+\tau\bar{\hat{w}})$, adding the results and taking the real part, we obtain
\begin{equation*}
\frac{d}{dt}\F_3(\xi,t)+\eta|\xi|^2|\hat{v}+\tau\hat{w}|^2 =
-|\xi|^2\func{Re}( \hat{\theta}(\bar{\hat{v}}+\tau\bar{\hat{w}}))+\eta|\xi|^2|\hat{\theta}|^2-|\xi|^2\func{Re}(\bar{\hat{\theta}}(\hat{u}+\tau\hat{v})),
\end{equation*}
where
\begin{eqnarray}\label{F_3_Equation}
\F_3(\xi,t)=\func{Re}(\bar{\hat{\theta}}(\hat{v}+\tau\hat{w})).
\end{eqnarray}
Applying Young's inequality, we obtain for any $\tilde{\epsilon}_1>0$,
 \begin{equation}\label{F_3_Estimate}
\frac{d}{dt}\F_3(\xi,t)+(\eta-\tilde{\epsilon}_1)|\xi|^2|\hat{v}+\tau\hat{w}|^2
\leq (\tilde{\epsilon}_2|\xi|^4|\hat{u}+\tau \hat{v}|^2+C(\tilde{\epsilon}_1,\tilde{\epsilon}_2)(1+|\xi|^2)|\hat{\theta}|^2)
\end{equation}
Now, we define the new Lyapunov functional as follows:
\begin{equation}\label{Lyapunov_Main}
\cLF(\xi,t)=\tilde{\gamma_0}(1+|\xi|^2+|\xi|^4)\hat{\mathscr{E}}_{\mathrm{F}}(\xi,t)+|\xi|^4 \F_1(\xi,t)+\tilde{\gamma}_1|\xi|^2\F_3(\xi,t).
\end{equation}
Now, taking the derivative of \eqref{Lyapunov_Main} and using \eqref{dE_dt_1_2},  \eqref{dF_1_dt_tau_2} and  \eqref{F_3_Estimate} we obtain
\begin{eqnarray*}
&&\frac{d}{dt}\cLF(\xi ,t)+\left( (1-\epsilon _{0})-\tilde{\gamma}%
_{1}\tilde{\epsilon} _{2}\right) |\xi |^{6}|\hat{u}+\tau \hat{v}|^{2} \\
&&+\left( \tilde{\gamma}_{1}(\eta -\tilde{\epsilon} _{1})-1\right) |\xi |^{4}|%
\hat{v}+\tau \hat{w}|^{2} \\
&&+\left( \tilde{\gamma}_{0}-C(\epsilon _{0},\tilde{\epsilon} _{1},\tilde{\epsilon} _{2},\tilde{\gamma}%
_{1})\right) |\xi |^{2}(1+|\xi |^{2}+|\xi |^{4})|\hat{\theta}|^{2}
\leq 0,\qquad \forall t\geq 0.
\end{eqnarray*}
Now, we fix the above constants as follows: we select $\epsilon_0$ and $\tilde{\epsilon}_1$  such that
\begin{eqnarray*}
\epsilon_0<1,\qquad \text{and}\qquad \tilde{\epsilon}_1<\eta.
\end{eqnarray*}
Then, we take $\tilde{\gamma}_1$ large enough such that
\begin{eqnarray*}
\tilde{\gamma}_1>\frac{1}{\eta-\tilde{\epsilon}_1}
\end{eqnarray*}
and $\tilde{\epsilon}_2$  small enough such that
\begin{eqnarray*}
\tilde{\epsilon}_2<\frac{1-\epsilon_0}{\tilde{\gamma}_1}
\end{eqnarray*}
After that, we take $\tilde{\gamma}_0$ large enough  such that
 \begin{eqnarray*}
\tilde{\gamma}_{0}>C(\epsilon _{0},\tilde{\epsilon} _{1},\tilde{\epsilon} _{2},\tilde{\gamma}%
_{1})
\end{eqnarray*}
Consequently, we deduce from above that there exists a $\Lambda_0>0$ such that
\begin{eqnarray}\label{E_L_tilde_Ineq}
\frac{d}{dt}\cLF(\xi ,t)+\Lambda_0|\xi|^4\hat{\EcF}(\xi,t)\leq 0, \qquad \forall t\geq 0.
\end{eqnarray}
 On the other hand, for $\tilde{\gamma}_0$ large enough, we can find two positive constants $\Lambda_1$ and $\Lambda_2$ such that
 \begin{eqnarray}\label{E_L_tilde_Ineq_2}
\Lambda_1(1+|\xi|^2+|\xi|^4)\hat{\EcF}(\xi,t)\leq \cLF(\xi ,t)\leq \Lambda_2(1+|\xi|^2+|\xi|^4)\hat{\EcF}(\xi,t),\qquad \forall t\geq 0.
\end{eqnarray}
Combining \eqref{E_L_tilde_Ineq} and \eqref{E_L_tilde_Ineq_2}, we obtain
\begin{eqnarray}\label{E_L_tilde_Ineq_3}
\frac{d}{dt}\cLF(\xi ,t)+\Lambda_3\frac{|\xi|^4}{1+|\xi|^2+|\xi|^4}\cLF(\xi,t)\leq 0, \qquad \forall t\geq 0,
\end{eqnarray}
for some $\Lambda_3>0$.
By \eqref{E_L_tilde_Ineq_3} and \eqref{E_L_tilde_Ineq_2}, and using Gronwall's inequality, this leads to \eqref{U_Inequality_2}. This finishes the proof of Proposition \ref{Prop_beta_tau_2}.
\end{proof}

\subsubsection{Eigenvalues expansion}

In this section, we show that the asymptotic expansion of the  eigenvalues near zero and near infinity agrees with the decay rate obtained in Theorem \ref{Decay_Fourier_2}.

\begin{remark}\label{rmk:expansion2}
The decay rate in Theorem \ref{Decay_Fourier_2} comes from the exponent
$$\vrF(\xi)= \dfrac{|\xi|^4}{1+|\xi|^2+|\xi|^4}$$
of Proposition \ref{Prop_beta_tau_2}. Observe that $\vrF(\xi)\sim |\xi|^4$ when $|\xi|\to 0$, and $\vrF(\xi)\sim 1$ when $|\xi|\to \infty$. This is the asymptotic behaviour we expect for the real parts of the slower eigenvalues in these cases (see Lemmas \ref{lemma:eig0_Fourier2} and \ref{lemma:eiginfty_Fourier2} below).
\end{remark}

\begin{lemma}\label{lemma:eig0_Fourier2}
When $\tau=\beta$, the real parts of the eigenvalues of the matrix $\Psi(\xi)$ defined in \eqref{ODE_Fourier} have the following expansion when $|\xi|\to 0$:
\begin{eqnarray*}
  \func{Re}(\lambda_{1,2})(\xi) &=& -\frac{\eta^2}{2}|\xi|^4+O(|\xi|^5) \\
  \func{Re}(\lambda_{3})(\xi) &=& -|\xi|^2+O(|\xi|^3) \\
 \func{Re}(\lambda_{4})(\xi) &=& -\frac{1}{\tau}+O(|\xi|^2).
\end{eqnarray*}
In particular, all of them are negative.
\end{lemma}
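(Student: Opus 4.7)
The plan is to specialize the eigenvalue-expansion argument used in Lemma \ref{lemma:eig0_Fourier1} to the degenerate case $\beta=\tau$. Setting $\beta=\tau$ in the characteristic polynomial \eqref{charpol_Fourier} (with $\zeta=i|\xi|$) gives
\begin{equation*}
\tau\lambda^4+(1-\tau\zeta^2)\lambda^3+(\tau\eta^2\zeta^2-\tau-1)\zeta^2\lambda^2+((\eta^2+\tau)\zeta^2-1)\zeta^2\lambda+\zeta^4=0.
\end{equation*}
Since this polynomial depends on $\zeta$ only through $\zeta^2$, each branch is an analytic or Puiseux function of $\zeta^2$. At $\zeta=0$ it collapses to $\lambda^3(\tau\lambda+1)$, producing one simple root at $\lambda=-1/\tau$ and a triple root at $\lambda=0$ from which the remaining three branches will emanate.

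For the branch near $\lambda=-1/\tau$, the implicit function theorem gives $\lambda_4(\zeta)=-1/\tau+O(\zeta^2)$, so $\func{Re}(\lambda_4)(\xi)=-1/\tau+O(|\xi|^2)$. For the three branches emerging from the triple root I would perform a Newton-polygon (dominant-balance) analysis: matching $\lambda^3$ against $-\zeta^2\lambda$ forces two branches of the form $\lambda=\pm\zeta+O(\zeta^2)$, while matching $-\zeta^2\lambda$ against $\zeta^4$ yields a third branch $\lambda_3(\zeta)=\zeta^2+O(\zeta^4)$, giving $\func{Re}(\lambda_3)(\xi)=-|\xi|^2+O(|\xi|^3)$.

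The computational heart of the proof lies in the branches $\lambda_{1,2}$. Insert the ansatz $\lambda=c\zeta+a_2\zeta^2+a_3\zeta^3+a_4\zeta^4+\cdots$ with $c=\pm1$ into the characteristic polynomial and match coefficients order by order. The order $\zeta^4$ equation simplifies to $2a_2=0$, so $a_2=0$: this is the key cancellation where the hypothesis $\beta=\tau$ enters (in Lemma \ref{lemma:eig0_Fourier1} the same coefficient equalled $-(\tau-\beta)/2$). The order $\zeta^5$ equation gives $a_3=\mp\eta^2/2$, which contributes only to the imaginary part. Finally, the order $\zeta^6$ equation reads $2a_4+\eta^2=0$, hence $a_4=-\eta^2/2$. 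Because $\zeta=i|\xi|$, odd powers of $\zeta$ are purely imaginary, so the leading real contribution is $a_4\zeta^4=-\frac{\eta^2}{2}|\xi|^4$, and $\func{Re}(\lambda_{1,2})(\xi)=-\frac{\eta^2}{2}|\xi|^4+O(|\xi|^5)$ follows. In particular, for small $|\xi|>0$ all four real parts are strictly negative.

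The main obstacle is the triple degeneracy at $\zeta=0$: the usual $O(|\xi|^2)$ dissipative term in $\func{Re}(\lambda_{1,2})$ vanishes identically when $\beta=\tau$, so one must push the Puiseux expansion two extra orders (to $\zeta^6$) before the dissipation manifests. Verifying by direct algebra that $a_2=0$ exactly under this hypothesis, and tracking the $\eta^2$-coefficients through the cancellations at orders $\zeta^4$ and $\zeta^6$, is the bookkeeping that carries the proof; the resulting $|\xi|^4$ behaviour of $\func{Re}(\lambda_{1,2})$ is what eventually produces the slower decay rate $(1+t)^{-N/8}$ seen in Theorem \ref{Decay_Fourier_2}.
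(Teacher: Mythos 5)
Your proposal is correct and follows essentially the same route as the paper: specialize the characteristic polynomial \eqref{charpol_Fourier} to $\beta=\tau$, expand the four roots in powers of $\zeta=i|\xi|$ about the triple root at $\lambda=0$ and the simple root at $\lambda=-1/\tau$, and read off the real parts (your order-by-order coefficients $a_2=0$, $a_3=\mp\eta^2/2$, $a_4=-\eta^2/2$ agree with the paper's stated expansion $\lambda_{1,2}=\pm\zeta\mp\frac{\eta^2}{2}\zeta^3-\frac{\eta^2}{2}\zeta^4+O(\zeta^5)$). The only difference is that you make explicit the Newton-polygon bookkeeping that the paper leaves implicit.
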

\begin{pf}
To show the asymptotic behaviour of the eigenvalues in the Fourier case when $\tau=\beta$ we will follow the same steps as in Section \ref{sec:eigFourier1}. First, observe that now the characteristic polynomial \eqref{Chara_Pol_Fourier} takes the form
\begin{eqnarray}\label{charpol_Fourier_0_2}
\hspace{0.7cm}\tau p_{\mathrm{c}}(\lambda)= \tau\lambda^4+(1-\tau\zeta^2)\lambda^3+( \tau\eta^2\zeta^2-\tau-1)\zeta^2\lambda^2 +
( (\eta^2+\tau)\zeta^2-1)\zeta^2\lambda +{\zeta^4}.
\end{eqnarray}
 In this case we have the following expansion of the eigenvalues
$$\lambda_j(\zeta) =\lambda_j^0+\lambda_j^1\zeta+\lambda_j^2\zeta^2+\ldots,\ j=1,\ldots,4,\ \textrm{ for } \zeta\to 0$$
as
\begin{eqnarray*}
  \lambda_{1,2}(\zeta) &=& \pm \zeta \mp \frac{\eta^2}{2}\zeta^3-\frac{\eta^2}{2}\zeta^4+O(\zeta^5), \\
  \lambda_{3}(\zeta) &=& \zeta^2+O(\zeta^3), \\
  \lambda_{4}(\zeta) &=& -\frac{1}{\tau}+O(\zeta^2).
\end{eqnarray*}
Consequently, recall that $\zeta=i|\xi|$, then we obtain the for $|\xi|\to 0$
\begin{eqnarray*}
  \func{Re}(\lambda_{1,2})(\xi) &=& -\frac{\eta^2}{2}|\xi|^4+O(|\xi|^5), \\
  \func{Re}(\lambda_{3})(\xi) &=& -|\xi|^2+O(|\xi|^3), \\
 \func{Re}(\lambda_{4})(\xi) &=& -\frac{1}{\tau}+O(|\xi|^2).
\end{eqnarray*}
\end{pf}

\begin{lemma}\label{lemma:eiginfty_Fourier2}
When $\tau=\beta$, the real parts of the eigenvalues of the matrix $\Psi(\xi)$ defined in \eqref{ODE_Fourier} have the following expansion when $|\xi|\to \infty$:
\begin{eqnarray*}
  \func{Re}(\lambda_{1})(\xi) &=& -\frac{1}{\tau}+O(|\xi|^{-1}), \\
  \func{Re}(\lambda_{2})(\xi) &=& -\frac{1}{\eta^2}+O(|\xi|^{-1}), \\
  \func{Re}(\lambda_{3})(\xi) &=& -\frac{1-\sqrt{1-4\eta^2}}{2}|\xi|^2+O(|\xi|), \\
  \func{Re}(\lambda_{4})(\xi) &=& -\frac{1+\sqrt{1-4\eta^2}}{2}|\xi|^2+O(|\xi|).
\end{eqnarray*}
In particular, all of them are negative.
\end{lemma}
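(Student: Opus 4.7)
The plan is to mirror closely the proof of Lemma \ref{lemma:eiginfty_Fourier1}. I rewrite \eqref{ODE_Fourier} as \eqref{ODE_Fourier_infty} using $\nu = \zeta^{-1} = (i|\xi|)^{-1}$, expand the roots $\mu(\nu)=\nu^{2}\lambda(\xi)$ of the characteristic polynomial \eqref{charpol_Fourier_infty} as $\nu\to 0$ (i.e.\ $|\xi|\to\infty$), and then translate back via $\lambda = -|\xi|^{2}\mu$. Setting $\beta=\tau$ in \eqref{charpol_Fourier_infty} gives
$$\tau\mu^{4}+(\nu^{2}-\tau)\mu^{3}+\bigl(\eta^{2}\tau-(\tau+1)\nu^{2}\bigr)\mu^{2}+(\eta^{2}+\tau-\nu^{2})\nu^{2}\mu+\nu^{4}=0.$$
In contrast with the case $\tau<\beta$, the limiting polynomial at $\nu=0$ collapses to $\tau\mu^{2}(\mu^{2}-\mu+\eta^{2})=0$, so two branches start from the simple roots $\mu=(1\pm\sqrt{1-4\eta^{2}})/2$, while the remaining two emanate from a double root at $\mu=0$.

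For the two simple roots, a standard regular perturbation argument yields $\mu_{3,4}(\nu)=(1\pm\sqrt{1-4\eta^{2}})/2+O(\nu)$, and hence, via $\lambda=-|\xi|^{2}\mu$, the expansions for $\func{Re}(\lambda_{3})$ and $\func{Re}(\lambda_{4})$ stated in the lemma. For the degenerate double root at $\mu=0$, a Puiseux/Newton-polygon analysis is needed. Balancing the lowest-order surviving terms (the constant $\nu^{4}$, the term $(\eta^{2}+\tau)\nu^{2}\mu$, and the term $\eta^{2}\tau\mu^{2}$) forces the ansatz $\mu=c\nu^{2}+O(\nu^{3})$. Plugging this into the polynomial, dividing by $\nu^{4}$ and letting $\nu\to 0$ gives the quadratic in $c$
$$\eta^{2}\tau c^{2}+(\eta^{2}+\tau)c+1=(\tau c+1)(\eta^{2}c+1)=0,$$
whose two roots are $c=-1/\tau$ and $c=-1/\eta^{2}$. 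Since $\lambda=-|\xi|^{2}\mu=-|\xi|^{2}\,c\nu^{2}+O(|\xi|^{-1})=c+O(|\xi|^{-1})$ (recall $\nu^{2}=-|\xi|^{-2}$), this produces the two remaining expansions $\func{Re}(\lambda_{1})=-1/\tau+O(|\xi|^{-1})$ and $\func{Re}(\lambda_{2})=-1/\eta^{2}+O(|\xi|^{-1})$.

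The only real (mild) obstacle is pinning down the correct Puiseux scaling $\mu\sim\nu^{2}$ for the double root and then verifying separation of the two branches from the quadratic factorization above; once the scaling is identified everything is algebraic. Negativity of all four real parts is immediate from $\tau,\eta^{2}>0$ together with the elementary fact that $\func{Re}\bigl((1\pm\sqrt{1-4\eta^{2}})/2\bigr)$ equals $(1\pm\sqrt{1-4\eta^{2}})/2>0$ if $4\eta^{2}\le 1$ and $1/2>0$ if $4\eta^{2}>1$. I will comment that, contrary to the $0<\tau<\beta$ case, the slowest decaying real part behaves like a constant (it does not grow in $|\xi|$), which is consistent with the exponent $\vrF(\xi)\sim 1$ as $|\xi|\to\infty$ coming from Proposition \ref{Prop_beta_tau_2} and noted in Remark \ref{rmk:expansion2}.
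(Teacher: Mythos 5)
Your proposal is correct and follows essentially the same route as the paper: pass to $\nu=\zeta^{-1}=(i|\xi|)^{-1}$, expand the roots $\mu=\nu^{2}\lambda$ of \eqref{charpol_Fourier_infty_2} at $\nu=0$, and translate back via $\lambda=-|\xi|^{2}\mu$; your explicit Newton-polygon treatment of the double root at $\mu=0$ and your remark on the case $4\eta^{2}>1$ only supply details the paper leaves implicit. (One minor inaccuracy in your framing, not affecting the argument: the limiting polynomial $\tau\mu^{2}(\mu^{2}-\mu+\eta^{2})$ and the double root at $\mu=0$ occur for $0<\tau<\beta$ as well, the only difference at $\tau=\beta$ being that the quadratic for $c$ factors as $(\tau c+1)(\eta^{2}c+1)$.)
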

\begin{pf}
As in the proof of Lemma \ref{lemma:eiginfty_Fourier1}, and as we want the asymptotic expansion of the eigenvalues when $|\xi|\to\infty$, we take $\nu=\zeta^{-1}=(i|\xi|)^{-1}$.
Then, for $\tau=\beta$, \eqref{charpol_Fourier_infty} takes the form
\begin{eqnarray}\label{charpol_Fourier_infty_2}
&& \tau\det(L\nu^{2}- A-\mu Id)= \\
\nonumber && \tau\mu^4 + \left({\nu^2}-\tau\right)\, \mu^3 + \left(\eta^2\tau-(\tau+1)\nu^2\right) \mu^2 + (\eta^2+\tau-\nu^2)\nu^2 \mu + \nu^4.
\end{eqnarray}
As in the proof of Lemma \ref{lemma:eiginfty_Fourier1}, observe that if $\lambda(\xi)$ is an eigenvalue of \eqref{charpol_Fourier_0_2}, then
$$\mu(\nu)=\nu^2\lambda=\zeta^{-2}\lambda=-|\xi|^{-2}\lambda$$
is an eigenvalue of \eqref{charpol_Fourier_infty_2}. We can now compute the asymptotic expansion of the roots of the characteristic polynomial \eqref{charpol_Fourier_infty_2} in terms of $\nu$ when $\nu\to 0$ (that is, when $|\xi|\to \infty$) and obtain
\begin{eqnarray*}
  \mu_{1}(\nu) &=& -\frac{1}{\tau}\nu^2+O(\nu^3), \\
  \mu_{2}(\nu) &=& -\frac{1}{\eta^2}\nu^2+O(\nu^3), \\
  \mu_{3}(\nu) &=& \frac{1-\sqrt{1-4\eta^2}}{2}+O(\nu), \\
  \mu_{4}(\nu) &=& \frac{1+\sqrt{1-4\eta^2}}{2}+O(\nu).
\end{eqnarray*}
Consequently, for $|\xi|\to\infty$, we have

\begin{eqnarray*}
  \func{Re}(\lambda_{1})(\xi) &=& -\frac{1}{\tau}+O(|\xi|^{-1}), \\
  \func{Re}(\lambda_{2})(\xi) &=& -\frac{1}{\eta^2}+O(|\xi|^{-1}), \\
  \func{Re}(\lambda_{3})(\xi) &=& -\frac{1-\sqrt{1-4\eta^2}}{2}|\xi|^2+O(|\xi|), \\
  \func{Re}(\lambda_{4})(\xi) &=& -\frac{1+\sqrt{1-4\eta^2}}{2}|\xi|^2+O(|\xi|).
\end{eqnarray*}
Observe that in all the cases we have $\func{Re}(\lambda_{j})(\xi)<0$, $j=1,\ldots,4$.  
\end{pf}
\section{The Cattaneo-law model}\label{Section_Cattaneo}

%

In this section, we consider the standard linear solid model for viscoelasticity coupled with heat conduction of Cattaneo type (see Section \ref{Introduction} for more details). That is, we consider system \eqref{Rivera_Model_Cattaneo} with $\tau_0>0$. Again, without loss of generality, we now take $a=1$ and $\gamma=\kappa$.

\subsection{Functional setting and well-posedness of the Cattaneo-law model}\label{Well-posed-Cattaneo}
The functional setting and the proof for the well-posedness in the Cattaneo case will follow the same ideas as the Fourier one (see Section \ref{Well-posed-Fourier}).

As in the Fourier problem, the first thing we need is to write problem \eqref{Rivera_Model_Cattaneo} as a first-order evolution equation. By taking $v=u_t$ and $w=u_{tt}$, the previous system can be written as
\begin{equation}\label{eq:ev_equation_C}
\left\{
\begin{array}{l}
\dfrac{d}{dt}U(t) = \AC U(t) ,\ \ t\in[0,+\infty)\vspace{0.2cm} \\
U(0)=U_0\end{array}
\right.
\end{equation}
where $U(t)=(u,v,w,\theta,q)^T$, $U_0=(u_0,u_1,u_2,\theta_0,q_0)^T$ and $\AC:\D(\AC)\subset \cH_{\mathrm{C}}\longrightarrow \cH_{\mathrm{C}}$ is the following linear operator
\begin{equation*}
\AC \left(
\begin{array}{c}
u \\
v \\
w \\
\theta \\
q
\end{array}%
\right)=
\left(
\begin{array}{c}
v \\
w \\
\frac{1}{\tau}\Delta(u+\beta v-\eta \theta)-\frac{1}{\tau}w \\
\eta\Delta(v+\tau w )-\gamma \nabla\cdot q \\
-\frac{1}{\tau_0}(q+\kappa \nabla\theta)
\end{array}%
\right)
\end{equation*}
(the dot stands for the usual inner product in $\R^N$). Following \cite{ABFRS_2012} and \cite{PellSaid_2016}, we introduce the energy space
$$\cH_{\mathrm{C}}= H^1(\R^N)\times H^1(\R^N)\times L^2(\R^N)\times L^2(\R^N) \times (L^2(\R^N))^N$$
with the following inner product
\begin{equation*}
\begin{array}{ll}
\langle (u,v,w,\theta,q),\,(u_1,v_1,w_1,\theta_1,q_1)  \rangle_{\cH_{\mathrm{C}}} & =  \langle (u,v,w,\theta),\,(u_1,v_1,w_1,\theta_1)  \rangle_{\cH_{\mathrm{F}}} + \,\tau_0\displaystyle\int_{\mathbb{R}^N} q\cdot \overline{q}_1 \,dx \\
& = \tau(\beta-\tau) \displaystyle\int_{\mathbb{R}^N}\nabla v\cdot\nabla \overline{v}_1  \,dx +\int_{\mathbb{R}^N} \nabla(u+\tau v)\cdot \nabla(\overline{u}_1+\tau \overline{v}_1)\,dx  \vspace{0.2cm}\\
& + \,\displaystyle\int_{\mathbb{R}^N} (v+\tau w)\, (\overline{v}_1+\tau \overline{w}_1) \,dx
 \, + \,\displaystyle\int_{\mathbb{R}^N} (u+\tau v)\, (\overline{u}_1+\tau \overline{v}_1) \,dx  \vspace{0.2cm}\\
&  + \,\displaystyle\int_{\mathbb{R}^N} v\, \overline{v}_1 \,dx + \,\displaystyle\int_{\mathbb{R}^N} \theta\, \overline{\theta}_1 \,dx + \,\dfrac{\gamma}{\kappa}\tau_0\displaystyle\int_{\mathbb{R}^N} q\cdot \overline{q}_1 \,dx
\end{array}
\end{equation*}
and the corresponding norm
\begin{equation*}\label{normC}
\begin{array}{ll}
\left\| (u,v,w,\theta,q)\right\|^2_{\cH_{\mathrm{C}}} = & \tau(\beta-\tau) \|\nabla v\|^2_{L^2(\R^N)}  + \|\nabla(u+\tau v)\|^2_{L^2(\R^N)} +  \|v+\tau w\|^2_{L^2(\R^N)}  \vspace{0.2cm} \\
& + \|u+\tau v\|^2_{L^2(\R^N)} + \|v\|^2_{L^2(\R^N)}+ \|\theta\|^2_{L^2(\R^N)} +\dfrac{\gamma}{\kappa}\tau_0 \|q\|^2_{(L^2(\R^N))^N},
\end{array}
\end{equation*}
where 
$$\|q\|^2_{(L^2(\R^N))^N} = \sum_{i=1}^N \|q_i\|^2_{L^2(\R^N)}.$$ 
As in Remark \ref{rem:equivnorms}, it can be seen that this is an equivalent norm to the natural one in $\cH_{\mathrm{C}}$.

We consider \eqref{eq:ev_equation_C} in the Hilbert space $\cH_{\mathrm{C}}$ with domain
\begin{eqnarray*}\label{eq:domain_F}
\D(\AC) =\left\{ (u,v,w,\theta,q)\in \cH_{\mathrm{C}}; w,\theta\in H^1(\R^N), \, q\in (H^1(\R^N))^N \right.
\\ \left.\textrm{ and } u+\beta v-\eta\theta, \eta v+\tau\eta w+\theta \in H^2(\R^N) \right\}.
\end{eqnarray*}

\begin{theorem}\label{thm:semigroup_C}
Under the dissipative condition $0<\tau\leq\beta$, the operator $\AC$ is the generator of a $C_0$-semigroup of contractions on $\cH_{\mathrm{C}}$. In particular, for any $U_0\in \D(\AC)$, there exists a unique solution $U\in C^1([0,+\infty);\cH_{\mathrm{C}})\cap  C([0,+\infty);\D(\AC))$ satisfying \eqref{eq:ev_equation_C}.
\end{theorem}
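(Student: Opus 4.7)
\begin{pf}[Proof proposal]
The plan is to mirror the argument used for Theorem \ref{thm:semigroup_F} in the Fourier case, replacing the Fourier energy space $\cH_{\mathrm{F}}$ by $\cH_{\mathrm{C}}$ and taking into account the extra flux variable $q$. As before, the scalar product on $\cH_{\mathrm{C}}$ is not adapted to $\AC$ directly, so I will work with a bounded perturbation
\begin{equation*}
\AbC = \AC + B_{\mathrm{C}}, \qquad B_{\mathrm{C}}(u,v,w,\theta,q)^T = \Bigl(0,\,0,\,-\tfrac{1}{\tau}u-v-\tfrac{1}{\tau^2}v,\,0,\,0\Bigr)^T,
\end{equation*}
exactly parallel to $\AbF$. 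Since $B_{\mathrm{C}}$ is clearly bounded on $\cH_{\mathrm{C}}$, by the bounded perturbation theorem (Theorem 1.1, Chapter 3 of \cite{Pazy}) it suffices to prove that $\AbC$ generates a $C_0$-semigroup of contractions; for this I will verify the two hypotheses of the Lumer--Phillips theorem, namely dissipativity and maximality.

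For dissipativity, I would compute $\Re\langle \AbC U, U\rangle_{\cH_{\mathrm{C}}}$ for $U=(u,v,w,\theta,q)\in \D(\AC)$. The first four entries of $\AbC U$ are identical to those of $\AbF U$ (with an additional $-\gamma\nabla\cdot q$ term in the $\theta$-slot), and the new contribution is $\tau_0\int_{\R^N} \bigl(-\tfrac{1}{\tau_0}(q+\kappa\nabla\theta)\bigr)\cdot\overline{q}\,dx = -\int_{\R^N}|q|^2\,dx -\kappa\int_{\R^N}\nabla\theta\cdot\overline{q}\,dx$, multiplied by $\gamma/\kappa$ coming from the weight in $\langle\cdot,\cdot\rangle_{\cH_{\mathrm{C}}}$. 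The $\nabla\theta\cdot \overline{q}$ cross term must cancel exactly against the $\gamma\nabla\cdot q$ term picked up from the $\theta$-equation upon integration by parts; this is precisely why the weight $\gamma/\kappa$ is placed in front of the $q$-term in the inner product. Once this cancellation is verified, I expect the bound
\begin{equation*}
\Re\langle \AbC U, U\rangle_{\cH_{\mathrm{C}}} = -(\beta-\tau)\|\nabla v\|_{L^2}^2 - \tfrac{1}{\tau}\|v\|_{L^2}^2 - \tfrac{\gamma}{\kappa}\|q\|_{(L^2)^N}^2 \leq 0
\end{equation*}
under the hypothesis $0<\tau\leq\beta$.

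For maximality, I would pick $\lambda=(-1+\sqrt{1+4\tau})/(2\tau)>0$ as in the Fourier case and solve $(\lambda Id - \AbC)U = F$ for an arbitrary $F=(f_1,\dots,f_5)\in\cH_{\mathrm{C}}$. The first two components give $v=\lambda u-f_1$ and $w=\lambda v-f_2$, and the last equation gives $q=(1+\lambda\tau_0)^{-1}(\tau_0 f_5-\kappa\nabla\theta)$; substituting these into the third and fourth equations reduces the problem to an elliptic system in the pair $(u,\theta)\in H^2(\R^N)\times H^2(\R^N)$ (after the usual regrouping of the combined unknowns $u+\beta v-\eta\theta$ and $\eta v+\tau\eta w+\theta$ that appear in $\D(\AC)$). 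The system will be strongly elliptic: the leading symbol is positive definite provided $\tau_0\geq 0$ and the coupling is conservative, so Lax--Milgram gives a unique weak solution, and elliptic regularity promotes it to $\D(\AC)$.

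The main obstacle, and the only point where Cattaneo genuinely differs from Fourier, is precisely the resolvent computation: one must check that eliminating the heat flux $q$ does not spoil coercivity of the associated bilinear form. The authors in fact signal that the Fourier proof is recovered by setting $\tau_0=0$, so I expect the Cattaneo case to deliver a bilinear form containing the extra terms $(1+\lambda\tau_0)^{-1}\kappa\gamma|\nabla\theta|^2$ (from substituting $q$) plus $(1+\lambda\tau_0)^{-1}\tau_0$ factors on the forcing side, all of which are benign. Once maximality is established, Lumer--Phillips yields the semigroup for $\AbC$, the bounded perturbation theorem returns $\AC$, and the regularity assertion $U\in C^1([0,\infty);\cH_{\mathrm{C}})\cap C([0,\infty);\D(\AC))$ for $U_0\in\D(\AC)$ follows from standard semigroup theory (\cite{Brezis,Pazy}).
\end{pf}
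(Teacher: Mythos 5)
Your proposal is correct and follows the paper's strategy almost step for step: the same bounded perturbation $\AbC=\AC+B_{\mathrm{C}}$, the same Lumer--Phillips plus bounded-perturbation reduction, the same dissipativity computation (the cross terms $\gamma\int q\cdot\nabla\overline{\theta}\,dx$ do cancel exactly as you predict against the $-\gamma\nabla\cdot q$ contribution, and with the normalization $\gamma=\kappa$ adopted at the start of Section \ref{Section_Cattaneo} the paper's output is $-(\beta-\tau)\|\nabla v\|^2-\tfrac1\tau\|v\|^2-\|q\|^2$, matching yours), and the same choice $\lambda_*=(-1+\sqrt{1+4\tau})/(2\tau)$, whose role---which you leave implicit---is precisely to annihilate the indefinite cross term $\eta(\tau\lambda^2+\lambda-1)\int\nabla u\cdot\nabla\overline{\theta}\,dx$ in the quadratic form, which is the real obstruction to coercivity (not the ellipticity of the leading symbol, which is unproblematic). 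The one place you genuinely diverge is the surjectivity step: you eliminate $q=(1+\lambda\tau_0)^{-1}(\tau_0 p-\kappa\nabla\theta)$ and reduce to a two-unknown system in $(u,\theta)$, whereas the paper keeps $q$ as a third unknown and applies Lax--Milgram to a bilinear form on $H^1(\R^N)\times H^1(\R^N)\times (H^1(\R^N))^N$. Your route works but carries two small wrinkles the paper's formulation sidesteps: since the datum $p$ is only in $(L^2(\R^N))^N$, eliminating $q$ puts the term $(1+\lambda\tau_0)^{-1}\tau_0\gamma\,\nabla\cdot p\in H^{-1}(\R^N)$ into the forcing of the $\theta$-equation, which must be interpreted weakly by moving the divergence onto the test function; and after solving for $(u,\theta)$ you must still verify that the recovered $q$ has the regularity required by $\D(\AC)$, which does not follow from $p\in(L^2)^N$ alone and needs the same kind of a posteriori argument the paper invokes to upgrade the mild solution to a strong one. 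The three-unknown formulation buys forcing functionals that stay in $L^2$ throughout and recovers $q$ as part of the Lax--Milgram output, at the cost of a larger system.
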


\begin{pf}
The proof follows the same steps as the proof of Theorem \ref{thm:semigroup_F}. As in Theorem \ref{thm:semigroup_F}, we are going to work with a perturbed problem, which now is

\begin{equation*}\label{eq:perturbed_equation_C}
\left\{
\begin{array}{l}
\dfrac{d}{dt}U(t) = \AbC U(t) ,\ \ t\in[0,+\infty)\vspace{0.2cm} \\
U(0)=U_0\end{array}
\right.
\end{equation*}
where
$$\AbC \left(
\begin{array}{c}
u \\
v \\
w \\
\theta \\
q
\end{array}\right) =
(\AC +B_{\mathrm{C}}) \left(
\begin{array}{c}
u \\
v \\
w \\
\theta\\
q
\end{array}\right) =
\left(
\begin{array}{c}
v \\
w \\
\frac{1}{\tau}\Delta(u+\beta v-\eta \theta)-\frac{1}{\tau}w- \frac{1}{\tau} u - v - \frac{1}{\tau^2} v \\
\eta\Delta( v+\tau w )-\gamma\nabla\cdot q \\
-\frac{1}{\tau_0}(q+\kappa\nabla\theta)
\end{array}
\right).$$
Using the same ideas as in Theorem \ref{thm:semigroup_F}, we can see that, for any $U\in \D(\AbC)$,
$$\Re \langle \AbC U,U \rangle_{\cH_{\mathrm{C}}} = -(\beta-\tau) \displaystyle\int_{\mathbb{R}^N}|\nabla v|^2\, dx   -\frac{1}{\tau} \displaystyle\int_{\mathbb{R}^N}| v|^2\, dx - \displaystyle\int_{\mathbb{R}^N}| q|^2\, dx\leq 0$$
since $0<\tau\leq \beta$. Hence, the operator $\AbC$ is dissipative when $0<\tau\leq \beta$.

We are now going to prove that $(\lambda Id-\AbC)$ is surjective in $\cH_{\mathrm{C}}$ for
\begin{equation}\label{lambda}
\lambda_\ast=\dfrac{-1+\sqrt{1+4\tau}}{2\tau}>0,
\end{equation}
which means that $\AbC$ is maximal in $\cH_{\mathrm{C}}$. As we said in the proof of Theorem \ref{Well-posed-Fourier}, we are going to give here the complete details of this proof, which can be used as a proof for the maximality of $\AbF$ in $\cH_{\mathrm{F}}$ just taking $\tau_0=0$.  We consider a given $F=(f,g,h,j,p)\in\cH_{\mathrm{C}}$. We want to see that there exists a unique $U=(u,v,w,\theta,q)\in\D(\AC)$ such that $(\lambda Id-\AbC)U=F$, that is
\begin{eqnarray}\label{eq:maximal_C}
\lambda u-v  = & f \in H^1(\R^N) \nonumber\\
\lambda v-w  = & g \in H^1(\R^N) \nonumber\\
\lambda w-\dfrac{1}{\tau}\Delta(u+\beta v-\eta\theta) + \dfrac{1}{\tau}w  + \dfrac{1}{\tau}u + v+\dfrac{1}{\tau^2}v  = & h \in L^2(\R^N)\label{maximalC_1}\\
\lambda\theta-\eta\Delta (v+\tau w)+\gamma\nabla\cdot q  = & j \in L^2(\R^N)\label{maximalC_2}\\
(\lambda\tau_0 +1)q +\kappa\nabla\theta = & \tau_0 p \in (L^2(\R^N))^N \label{maximalC_3}.
\end{eqnarray}
By taking $v=\lambda u-f$, $w=\lambda v-g=\lambda^2u-\lambda f-g$ and plugging it into \eqref{maximalC_1}, \eqref{maximalC_2} and \eqref{maximalC_3} we arrive at the following system:
\begin{eqnarray}\label{eq:aux_C}
&&\left(\lambda^3\tau+\lambda^2+ \lambda\dfrac{\tau^2+1}{\tau}+1\right)u-\left(1+\lambda\beta\right)\Delta u +\eta\Delta\theta  \nonumber \\
&&= \Big(\lambda^2\tau+\lambda+\dfrac{\tau^2+1}{\tau}\Big)f  -\beta\Delta f+ \left(\lambda \tau+1\right) g +\tau h -\lambda\eta(1+\tau\lambda)\Delta u +\lambda\theta+\gamma\nabla\cdot q \label{eq:aux1}\\
 &&=j-\eta(1+\lambda\tau)\Delta f-\tau\eta \Delta g + \dfrac{\gamma}{\kappa}(\lambda\tau_0+1)q+\gamma\nabla\theta\label{eq:aux2}\\
 && =\dfrac{\gamma}{\kappa}\tau_0 p.\label{eq:aux3}
\end{eqnarray}
In order to solve this problem, we consider its weak formulation given by the bilinear form
$$\mathcal{M}: (H^1(\R^N)\times H^1(\R^N)\times (H^1(\R^N))^N)\times (H^1(\R^N)\times H^1(\R^N)\times (H^1(\R^N))^N)\longrightarrow \R$$
given by
\begin{eqnarray*}
\mathcal{M}\left((u,\theta,q),(\varphi,\chi,\phi)\right) 
&=& \left(\lambda^3\tau+\lambda^2+ \lambda\dfrac{\tau^2+1}{\tau}+1\right) \displaystyle\int_{\mathbb{R}^N} u\varphi \,dx \\
&&+ (1+\beta\lambda)\displaystyle\int_{\mathbb{R}^N}\nabla u\cdot\nabla\varphi\,dx -\eta \displaystyle\int_{\mathbb{R}^N}\nabla \theta\cdot\nabla\varphi\,dx  \\
 &&+\lambda\eta(1+\tau\lambda) \displaystyle\int_{\mathbb{R}^N} \nabla u\cdot\nabla\chi\,dx + \lambda\displaystyle\int_{\mathbb{R}^N} \theta\chi\,dx +\gamma \displaystyle\int_{\mathbb{R}^N} (\nabla\cdot q)\chi\,dx \\
 &&+(\tau_0\lambda+1) \displaystyle\int_{\mathbb{R}^N} q\phi \,dx +\kappa \displaystyle\int_{\mathbb{R}^N} \nabla\theta\cdot\phi\,dx
\end{eqnarray*}
and the linear one
$$\mathcal{K}: (H^1(\R^N)\times H^1(\R^N)\times (H^1(\R^N))^N)\longrightarrow \R$$
given by
\begin{eqnarray*}
\mathcal{K}(\varphi,\chi,\phi)& =& \left(\lambda^2\tau+\lambda+\dfrac{\tau^2+1}{\tau}\right)\displaystyle\int_{\mathbb{R}^N} f\varphi\,dx +\beta \displaystyle\int_{\mathbb{R}^N} \nabla f\cdot\nabla\varphi\,dx \\
&&+ (\lambda \tau+1) \displaystyle\int_{\mathbb{R}^N}g\varphi\,dx + \tau \displaystyle\int_{\mathbb{R}^N} h\varphi\,dx+ \displaystyle\int_{\mathbb{R}^N}j\chi\,dx\\
&& +\eta(1+\tau\lambda)\displaystyle\int_{\mathbb{R}^N}\nabla f\cdot\nabla\chi\,dx + \tau\eta\displaystyle\int_{\mathbb{R}^N} \nabla g\cdot\nabla\chi\,dx +\tau_0 \displaystyle\int_{\mathbb{R}^N} p\cdot\phi\,dx.
\end{eqnarray*}
We can see that $\mathcal{M}$ is coercive. Integrating by parts using that $u,\theta\in H^1(\R^N)$ and $q\in (H^1(\R^N))^N$,  we obtain:
\begin{eqnarray*}
\mathcal{M}\left((u,\theta,q),(u,\theta,q)\right)  
&= &\left(\lambda^3\tau+\lambda^2+ \lambda\dfrac{\tau^2+1}{\tau}+1\right) \displaystyle\int_{\mathbb{R}^N} u^2 \,dx \\
&&+ (1+\beta\lambda)\displaystyle\int_{\mathbb{R}^N}|\nabla u|^2\,dx \\
 &&+\eta (\tau\lambda^2+\lambda-1) \displaystyle\int_{\mathbb{R}^N} \nabla u\cdot \nabla \theta  + \lambda \displaystyle\int_{\mathbb{R}^N}\theta^2\,dx + \dfrac{\gamma}{\kappa}(\lambda\tau_0+1)  \displaystyle\int_{\mathbb{R}^N}|q|^2\,dx.  \\
\end{eqnarray*}
Taking $\lambda_*=\frac{-1+\sqrt{1+4\tau}}{2\tau}>0$ (see \eqref{lambda}) the third term vanishes and hence,
\begin{equation*}
\mathcal{M}\left((u,\theta,q),(u,\theta,q)\right) \geq C \left(\|u\|^2_{H^1(\R^N)} +\|\theta\|^2_{H^1(\R^N)} + \|q\|_{(H^1(\R^N))^N} \right)
\end{equation*}
for $C=\min\left\{\lambda_*^3\tau+\lambda_*^2+ \lambda_*\dfrac{\tau^2+1}{\tau}+1,\ 1+\beta\lambda_*,\ \lambda_*, \ \lambda_*\tau_0+1\right\}$.  

We can also see that $\mathcal{M}$ is bounded using the H\"{o}lder inequality:
\begin{equation*}
\begin{array}{ll}
\mathcal{M}\left((u,\theta,q),(\varphi,\chi,\phi)\right) & \leq \left|\lambda^3\tau+\lambda^2+ \lambda\dfrac{\tau^2+1}{\tau}+1\right| \|u\|_{L^2(\R^N)}\cdot\|\varphi\|_{L^2(\R^N)}  \\ \\
& + |1+\beta\lambda| \|\nabla u\|_{L^2(\R^N)}\|\nabla \varphi\|_{L^2(\R^N)} + \eta \| \nabla\theta\|_{L^2(\R^N)}\| \nabla\varphi\|_{L^2(\R^N)}\\ \\
& + \lambda\eta |1+\tau\lambda| \| \nabla u\|_{L^2(\R^N)} \| \nabla\chi\|_{L^2(\R^N)}\| +\lambda \| \theta\|_{L^2(\R^N)} \| \chi\|_{L^2(\R^N)} \\ \\
& + \kappa \| q\|_{(L^2(\R^N))^N}\| \nabla\chi\|_{L^2(\R^N)} + \\ \\
& + \left|\dfrac{\gamma}{\kappa}\lambda\tau_0+1\right| \| q\|_{(L^2(\R^N))^N}\| \phi\|_{(L^2(\R^N))^N}+ \gamma \| \nabla\theta\|_{L^2(\R^N)}\| \phi\|_{(L^2(\R^N))^N}. 
\end{array}
\end{equation*}
Hence,
\begin{eqnarray*}
\mathcal{M}\left((u,\theta,q),(\varphi,\chi,\phi)\right)&\leq& 
 C \left(\|u\|^2_{H^1(\R^N)} +\|\theta\|^2_{H^1(\R^N)} + \|q\|_{(H^1(\R^N))^N} \right)^{1/2} \\
 &&\times\left(\|\varphi\|^2_{H^1(\R^N)} +\|\chi\|^2_{H^1(\R^N)} + \|\phi\|_{(H^1(\R^N))^N} \right)^{1/2}
\end{eqnarray*}
for some $C>0$ (for any $\lambda>0$ and, in particular, for $\lambda=\lambda_*>0$).

Finally, as $f,g\in H^1(\R^N)$, $h\in L^2(\R^N)$,  $p\in (L^2(\R^N)^N$, we can see that $\mathcal{K}$ is bounded:
\begin{equation*}
\begin{array}{ll}
\mathcal{K}(\varphi,\chi,\phi) & \leq |\lambda^2\tau+\lambda+\tau+\frac{1}{\tau}| \| f\|_{L^2(\R^N)} \| \varphi\|_{L^2(\R^N)} + |\beta| \| \nabla f\|_{L^2(\R^N)}\| \nabla \varphi\|_{L^2(\R^N)} \\ \\
& + \left(|\tau| \| h\|_{L^2(\R^N)} +|\tau\lambda+1|\| g\|_{L^2(\R^N)} + \| j\|_{L^2(\R^N)} \right) \|\varphi\|_{L^2(\R^N)} \\ \\
& + \left(| \eta (1+\lambda\tau) | \| \nabla f\|_{L^2(\R^N)} + |\tau\eta|\| \nabla g\|_{L^2(\R^N)}  \right)\| \nabla \chi\|_{L^2(\R^N)} \\ \\
& + |\tau_0| \| p\|_{(L^2(\R^N))^N} \| \phi\|_{(L^2(\R^N))^N} \\ \\
& \leq C \left( \| \varphi\|^2_{H^1(\R^N)} + \| \chi\|^2_{H^1(\R^N)} + \| \phi\|^2_{(H^1(\R^N))^N}   \right) ^{1/2}
\end{array}
\end{equation*}
for some $C>0$ (for any $\lambda>0$ and, in particular, for $\lambda=\lambda_*>0$). So, by the Lax-Milgram theorem, there exists a unique $(u,\theta,q)\in H^1(\R^N)\times H^1(\R^N)\times (H^1(\R^N))^N$ such that

$$\mathcal{M}\left((u,\theta,q),(\varphi,\chi,\phi)\right)=\mathcal{K}(\varphi,\chi,\phi)\ \ \forall\ (\varphi,\chi,\phi)\in H^1(\R^N)\times H^1(\R^N)\times (H^1(\R^N))^N.$$
By the same arguments as in \cite{ABFRS_2012}, this mild solution is, indeed, a strong solution of \eqref{eq:aux_C}-\eqref{eq:aux3}. This means the operator $\lambda_* Id-\AbC$ is maximal in $\cH_{\mathrm{C}}$, as we claimed.
Finally, by the Lummer-Phillips theorem we can conclude that $\AbC$ is the generator of a $C_0$-semigroup of contractions in $\cH_{\mathrm{C}}$. And so it is $\AC$, as it is a bounded perturbation of $\AbC$ in $\cH_{\mathrm{C}}$. The regularity of the solution is a consequence of this fact (see \cite{Brezis} or \cite{Pazy}).
\end{pf}

\subsection{Stability results: the case $0<\tau<\beta$ }\label{stabCattaneo_1}
We show some stability results  of a norm related
with the solution of \eqref{Bose_Problem} when $\tau_0>0$. We discuss the two cases separatedly: $0<\tau<\beta$ in the present section and $0<\tau= \beta$ in Section \ref{stabCattaneo_2}. In this section, we show that the $L^2$-norm of the norm related to the solution decays with the rate $(1+t)^{-N/4}$. But, unlike the Fourier model, the Cattaneo model yields a regularity loss. 

Using the same change of variables as in Section \ref{Section_Fourier}, we may rewrite system \eqref{Bose_Problem} as

\begin{subequations}
\begin{eqnarray}
&&u_{t}-v=0, \label{First_equation_system_Cattaneo}\\
&&v_t-w=0,
 \label{Second_equation_system_Cattaneo}\\
&&\tau w_{t}+ w-\Delta u-\beta\Delta v+\eta\Delta \theta=0,  \label{Third_equation_system_Cattaneo} \\
&&\theta _{t}+\kappa \nabla\cdot q -\eta\tau\Delta w-\eta\Delta v=0,  \label{Fourth_equation_system_Cattaneo}\\
&& \tau_0q_t+q+\kappa \nabla\theta=0,\label{Fifth_equation_system_Cattaneo}
\end{eqnarray}%
with the initial data
\begin{eqnarray}
&&u(x,0)=u_0(x),\quad v\left( x,0\right)  =v_{0}\left( x\right),\quad w(x,0)=w_0(x),\notag\\
&& \theta (x,0)=\theta_0(x),\quad q(x,0)=q_0.
\label{Initial_condition_system_Cattaneo}
\end{eqnarray}
\label{Main_First_Order_system_Cattaneo}
\end{subequations}
Taking the Fourier transform of the previous system \eqref{Main_First_Order_system_Cattaneo}, we obtain
\begin{subequations}
\begin{eqnarray}
&&\hat{u}_{t}-\hat{v}=0, \label{First_equation_system_Fourier_Cattaneo}\\
&&\hat{v}_{t}-\hat{w}=0,
 \label{Second_equation_system_Fourier_Cattaneo}\\
&&\tau\hat{w}_{t}+\hat{w}+|\xi|^2\hat{u}+\beta |\xi|^2\hat{v}-\eta|\xi|^2\hat{\theta}=0,  \label{Third_equation_system_Fourier_Cattaneo} \\
&&\hat{\theta} _{t}+\kappa i\xi\cdot \hat{q}+\eta\tau|\xi|^2 \hat{w}+\eta |\xi|^2\hat{v}=0,  \label{Fourth_equation_system_Fourier_Cattaneo}
\\
&& \tau_0\hat{q}_t+\hat{q}+i\xi \kappa \hat{\theta}=0,\label{Fifth_equation_system__Fourier_Cattaneo}
\end{eqnarray}
with the initial data
\begin{eqnarray}
&&\hat{u}\left( \xi,0\right)  =\hat{u}_{0}\left( \xi\right),\quad \hat{v}\left( \xi,0\right) =v_{0}\left( \xi\right),\quad
\hat{w}\left( \xi,0\right)  =\hat{w}_{0}\left( \xi\right),\notag \\
&& \hat{\theta}\left( \xi,0\right)=\hat{\theta}(\xi),\quad \hat{q}\left( \xi,0\right)=\hat{q}(\xi).
\label{Initial_condition_system_Fourier_Cattaneo}
\end{eqnarray}
\label{System_New_2_Cattaneo}
\end{subequations}

\subsubsection{The Lyapunov functional}
Our main result in this section follows the same ideas as in the corresponding sections of the Fourier problem, and reads as:
\begin{theorem} \label{Decay_Second_system_Cattaneo}
Assume that $0<\tau<\beta$.  Let $\VC(x,t)=(\tau u_{tt}+u_t,\nabla (\tau u_t+u),\nabla u_t,\theta,q)^T(x,t)$, where $(u,\theta,q)$ is the solution of (\ref{Bose_Problem}) with $\tau_0>0$. Let $s$ be a nonnegative integer and let $\VC^0=\VC(x,0)\in
H^{s}\left(
\mathbb{R}^N
\right) \cap L^{1}\left(
\mathbb{R}^N
\right) .$  Then, the following decay estimate
\begin{equation*}\label{Main_estimate_Theorem_2}
\left\Vert \nabla^{k}\VC\left( t\right) \right\Vert _{L^{2}(\mathbb{R}^N)}\leq  C(1+t)^{-N/4-k/2}\left\Vert \VC^0 \right\Vert _{L^{1}(\mathbb{R}^N)}+C(1+t)^{-\ell/2}\left\Vert \nabla^{k+\ell}\VC^0 \right\Vert _{L^{2}(\mathbb{R}^N)}
\end{equation*}
holds,
for any $t\geq 0$ and $0\leq k+\ell\leq s$, where $C$ and $c$ are two positive constants independent of $t$ and $\VC^0$.
\end{theorem}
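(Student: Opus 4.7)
The plan is to mirror the strategy of Theorem \ref{Decay_Second_system}, adapting it to the Cattaneo system \eqref{System_New_2_Cattaneo}. The end result we expect is a pointwise Fourier-space estimate of the form
\begin{equation*}
|\hat{\VC}(\xi,t)|^2 \leq C\, e^{-c\rC(\xi)\,t}\,|\hat{\VC}(\xi,0)|^2,
\end{equation*}
where this time the Fourier weight must degenerate at infinity, so that regularity loss appears. The natural candidate is
\begin{equation*}
\rC(\xi) \;=\; \frac{|\xi|^2}{(1+|\xi|^2)^2},
\end{equation*}
which behaves like $|\xi|^2$ at $|\xi|\to 0$ and like $|\xi|^{-2}$ at $|\xi|\to\infty$. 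Once this pointwise estimate is available, the rest is essentially identical to the Plancherel/splitting argument already performed for Theorem \ref{Decay_Second_system}.

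To obtain the pointwise estimate I would construct a Lyapunov functional in Fourier space. The natural energy is
\begin{equation*}
\hat{\EC}(\xi,t) \;=\; \hat{\EF}(\xi,t) \;+\; \frac{\tau_0}{2\kappa}\,|\hat{q}|^2,
\end{equation*}
and a direct computation using \eqref{First_equation_system_Fourier_Cattaneo}--\eqref{Fifth_equation_system__Fourier_Cattaneo} gives
\begin{equation*}
\frac{d}{dt}\hat{\EC}(\xi,t) \;=\; -(\beta-\tau)|\xi|^2|\hat{v}|^2 \;-\; \tfrac{1}{\kappa}|\hat{q}|^2.
\end{equation*}
The crucial new difficulty compared with the Fourier case is that Cattaneo's law does not produce direct dissipation on $\hat{\theta}$: we only dissipate $\hat v$ and $\hat q$. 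To recover dissipation on $\hat\theta$ I would introduce a cross-functional of the type
\begin{equation*}
\F_4(\xi,t) \;=\; -\,\func{Re}\!\bigl(i\xi\,\hat{\theta}\cdot\bar{\hat{q}}\bigr),
\end{equation*}
whose derivative, by combining \eqref{Fourth_equation_system_Fourier_Cattaneo} and \eqref{Fifth_equation_system__Fourier_Cattaneo} and applying Young, produces a term of the form $\kappa|\xi|^2|\hat\theta|^2$ against controllable remainders in $|\hat q|^2$, $|\xi|^4|\hat w|^2$ and $|\xi|^4|\hat v|^2$. The functionals $\F_1$ and $\F_2$ of Lemmas~\ref{Lemma_F_1} and \ref{Lemma_F_2} are reused verbatim (they never exploited the Fourier heat law) to recover dissipation in $\hat{u}+\tau\hat{v}$ and $\hat{v}+\tau\hat{w}$. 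I then form
\begin{equation*}
\cLC(\xi,t) \;=\; \gamma_0\,\hat{\EC}(\xi,t) \;+\; \frac{|\xi|^2}{(1+|\xi|^2)^2}\bigl[\F_1 + \gamma_1\F_2 + \gamma_2\F_4\bigr],
\end{equation*}
and fix $\gamma_0,\gamma_1,\gamma_2$ and auxiliary smallness parameters (in the same order as in the proof of Proposition \ref{Main_Lemma}) so that
\begin{equation*}
\frac{d}{dt}\cLC(\xi,t) \;+\; c\,\rC(\xi)\,\cLC(\xi,t) \;\leq\; 0,
\end{equation*}
with $\cLC$ equivalent to $|\hat{\VC}|^2$. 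Gronwall then yields the claimed pointwise estimate.

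With the pointwise bound in hand, Plancherel gives
\begin{equation*}
\|\nabla^k\VC(t)\|_{L^2}^2 \;\leq\; C\!\int_{|\xi|\leq 1}\!|\xi|^{2k}e^{-c\rC(\xi)t}|\hat{\VC}^0|^2\,d\xi \;+\; C\!\int_{|\xi|\geq 1}\!|\xi|^{2k}e^{-c\rC(\xi)t}|\hat{\VC}^0|^2\,d\xi \;=:\; L_1+L_2.
\end{equation*}
On $|\xi|\le 1$ we have $\rC(\xi)\gtrsim |\xi|^2$, so the argument of Theorem \ref{Decay_Second_system} applied to $L_1$ produces the low-frequency bound $C(1+t)^{-N/2-k}\|\VC^0\|_{L^1}^2$. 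On $|\xi|\ge 1$ we have $\rC(\xi)\gtrsim |\xi|^{-2}$, and the elementary inequality
\begin{equation*}
\sup_{|\xi|\geq 1}\;|\xi|^{-2\ell}\,e^{-c\,t/|\xi|^2} \;\leq\; C(1+t)^{-\ell}
\end{equation*}
(which is the standard regularity-loss trick, cf.\ \cite{HKa06,IHK08}) converts a factor $|\xi|^{-2\ell}$ into decay $(1+t)^{-\ell}$ at the cost of extracting $|\xi|^{2\ell}$ from $|\hat{\VC}^0|^2$. This gives $L_2 \leq C(1+t)^{-\ell}\|\nabla^{k+\ell}\VC^0\|_{L^2}^2$.

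The main obstacle will be the construction of $\F_4$ and the precise tuning of the weights in $\cLC$: because the Cattaneo coupling only dissipates $\hat q$ and $\hat v$, the chain used to recover dissipation in $\hat\theta$, $\hat v+\tau\hat w$ and $\nabla(\hat u+\tau\hat v)$ generates error terms at higher orders in $|\xi|$, and these must be reabsorbed by the factor $|\xi|^2/(1+|\xi|^2)^2$. This is exactly what forces the weight $\rC(\xi)$ to decay at infinity and produces the regularity loss announced in the statement.
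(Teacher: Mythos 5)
Your overall architecture coincides with the paper's: a pointwise Fourier-space bound $|\hat{\VC}(\xi,t)|^2\le Ce^{-c\rC(\xi)t}|\hat{\VC}(\xi,0)|^2$ with $\rC(\xi)\sim|\xi|^2$ at zero and $\sim|\xi|^{-2}$ at infinity, followed by the Plancherel splitting and the sup-trick $\sup_{|\xi|\ge1}|\xi|^{-2\ell}e^{-ct|\xi|^{-2}}\le C(1+t)^{-\ell}$ at high frequencies. That second half is correct and is exactly how the paper deduces the theorem from Proposition \ref{Main_Lemma_Cattaneo} (whose exponent $|\xi|^2/(1+|\xi|^2+|\xi|^4)$ differs from your $|\xi|^2/(1+|\xi|^2)^2$ only cosmetically). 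The gaps are in the construction of the Lyapunov functional, i.e.\ in the part of the argument that actually carries the new content.

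First, with the uniform weight $|\xi|^2/(1+|\xi|^2)^2$ in front of $\F_4$, the only source of dissipation for $\hat{\theta}$ in your scheme is $\tfrac{|\xi|^2}{(1+|\xi|^2)^2}\cdot\kappa|\xi|^2|\hat{\theta}|^2\sim|\xi|^4|\hat{\theta}|^2$ as $|\xi|\to0$, whereas the inequality $\tfrac{d}{dt}\cLC+c\,\rC(\xi)\,\cLC\le0$ with $\cLC\sim\hat{\EC}\ni\tfrac12|\hat{\theta}|^2$ requires $|\xi|^2|\hat{\theta}|^2$ there. As written, your functional can only yield $e^{-c|\xi|^4t}$ near $\xi=0$, i.e.\ the rate $(1+t)^{-N/8}$ of the $\tau=\beta$ case rather than the claimed $(1+t)^{-N/4}$. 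The paper avoids this by giving the correctors \emph{different} weights: the $\hat{\theta}$--$\hat{q}$ cross term enters with constant weight, $\F_1$ with weight $|\xi|^2$, $\F_2$ with weight $|\xi|^2$, and the energy with weight $1+|\xi|^2+|\xi|^4$ (which is also what supplies the $|\xi|^4|\hat{q}|^2$ and $|\xi|^4|\hat{v}|^2$ dissipation needed at high frequencies, unavailable from your unweighted energy).

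Second, and more seriously, the time derivative of the bare cross term $\func{Re}\langle i\xi\hat{\theta},\hat{q}\rangle$ contains, through $\hat{\theta}_t$, the term $\eta\tau_0|\xi|^2\func{Re}\langle i\xi(\hat{v}+\tau\hat{w}),\hat{q}\rangle$ (note it is the combination $\hat{v}+\tau\hat{w}$ that appears; the remainder ``$|\xi|^4|\hat{w}|^2$'' you list is not dissipated by anything in the system, so it must be kept in this combination). No Young splitting of this term closes: whichever way you split, one side carries an unbounded extra power of $|\xi|$ relative to the available dissipations $|\hat{q}|^2$ and $\tfrac{|\xi|^2}{(1+|\xi|^2)^2}|\hat{v}+\tau\hat{w}|^2$. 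The paper's fix --- the essential new idea of this proof --- is to integrate by parts in time: it adds the correction $\eta\tau_0|\xi|^2\func{Re}\langle i\xi(\hat{u}+\tau\hat{v}),\hat{q}\rangle$ to the functional ($\mathcal{F}_3$ in the paper's notation) and uses the $\hat{q}$-equation, trading the bad term for $\func{Re}\langle i\xi(\hat{u}+\tau\hat{v}),\hat{q}\rangle$ and $\kappa\eta|\xi|^4\func{Re}((\hat{u}+\tau\hat{v})\bar{\hat{\theta}})$; these are then absorbed because the combined functional $\mathcal{F}_4=\kappa|\xi|^2\F_1+\mathcal{F}_3$ produces the dissipation $\kappa|\xi|^4|\hat{u}+\tau\hat{v}|^2$ at exactly the right order. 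Without this correction, and without rebalancing the weights as above, your differential inequality does not close at either end of the frequency range.
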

\begin{remark}
Observe that we have the same decay rate as in the Fourier case when $0<\tau<\beta$ but, in that case, there was not any regularity loss phenomenon (although the norm related to the solution is not exactly the same in both cases). Hence, the fact that the heat coupling is of Cattaneo type does not imply a slower decay rate, but a loss of regularity in the solution. Such regularity loss is one of the major difficulties for proving global existence in nonlinear problems. In many cases higher regularity assumption is needed to close the estimates. See for instance  \cite{HKa06} and \cite{Racke_Said_2012_1}. 
\end{remark}

To justify rigorously the decay rate \eqref{Main_estimate_Theorem_2} without computing the solution we use the energy method in the Fourier space to build an appropriate Lyapunov functional that will give us the decay rate of the Fourier image of $\VC(x,t)$ and, hence, the above decay rate. In Section \ref{Section_Asym_Cattaeo}, we use eigenvalues expansion to show that the decay rate obtained in Theorem \ref{Decay_Second_system_Cattaneo} is optimal.

We will follow the same ideas as in Section \ref{sec:FourierFunctional}. The decay rate of the Fourier image of $W$ is the one given in the following proposition.

\begin{proposition}
\label{Main_Lemma_Cattaneo} Assume that $0<\tau<\beta$. Let $\VC(x,t)=(\tau u_{tt}+u_t,\nabla (\tau u_t+u),\nabla u_t,\theta,q)^T(x,t)$, where $(u(x,t),\theta(x,t),q(x,t))$ is the solution of \eqref{Bose_Problem} for $\tau_0>0$. Then we have
\begin{equation}  \label{V_Inequality}
|\hat{\VC}(\xi,t)|^2\leq Ce^{-c\rC(\xi)t}|\hat{\VC}(\xi,0)|^2,
\end{equation}
where
\begin{equation*}
\rC(\xi)=\frac{|\xi|^2}{1+|\xi|^2+|\xi|^4},
\end{equation*}
 and $C$ and $c$ are two positive
constants.
\end{proposition}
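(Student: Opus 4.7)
The plan is to imitate the Lyapunov construction of Proposition \ref{Main_Lemma}, but with two modifications to accommodate the Cattaneo law \eqref{Fifth_equation_system__Fourier_Cattaneo}: the energy must carry a $|\hat{q}|^2$ contribution, and we must build one additional cross functional that extracts dissipation for $|\hat\theta|^2$ from the Cattaneo coupling. First, I define the modified energy
\[
\hat{\EcC}(\xi,t):=\tfrac{1}{2}\bigl\{|\hat v+\tau\hat w|^2+|\xi|^2|\hat u+\tau\hat v|^2+\tau(\beta-\tau)|\xi|^2|\hat v|^2+|\hat\theta|^2+\tfrac{\tau_0}{\kappa}|\hat q|^2\bigr\},
\]
which, thanks to $0<\tau<\beta$, is equivalent to $|\hat{\VC}(\xi,t)|^2+|\hat q|^2$. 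Repeating the computation of Lemma \ref{dissipa_Energy} with the extra step of multiplying \eqref{Fifth_equation_system__Fourier_Cattaneo} by $\bar{\hat q}/\kappa$ and taking real parts will give
\[
\tfrac{d}{dt}\hat{\EcC}(\xi,t)=-(\beta-\tau)|\xi|^2|\hat v|^2-\tfrac{1}{\kappa}|\hat q|^2,
\]
since the cross terms between the $\kappa i\xi\cdot\hat q$ term in \eqref{Fourth_equation_system_Fourier_Cattaneo} and the $i\xi\kappa\hat\theta$ term in \eqref{Fifth_equation_system__Fourier_Cattaneo} cancel.

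Next, the functionals $\F_1$ in \eqref{F_1_Functional} and $\F_2$ in \eqref{F_2_Functional} from the Fourier analysis still produce, after re-deriving the inequalities \eqref{dF_1_dt} and \eqref{dF_2_dt_1} using the new equations, dissipation of $|\xi|^2|\hat u+\tau\hat v|^2$ and of $|\hat v+\tau\hat w|^2$ respectively, at the cost of terms controlled by $|\hat v+\tau\hat w|^2$, $|\xi|^2|\hat v|^2$, $|\xi|^2|\hat\theta|^2$ and $|\xi|^2|\hat u+\tau\hat v|^2$ with small constants. Since the dissipation in $\hat{\EcC}$ only provides $|\hat q|^2$ and not $|\xi|^2|\hat\theta|^2$, I introduce the new functional
\[
\F_4(\xi,t):=-\Re\bigl(i\xi\cdot\bar{\hat q}\,\hat\theta\bigr).
\]
Differentiating $\F_4$ using \eqref{Fourth_equation_system_Fourier_Cattaneo} and \eqref{Fifth_equation_system__Fourier_Cattaneo} produces, after Young's inequality, an estimate of the form
\[
\tfrac{d}{dt}\F_4+c_0 \kappa|\xi|^2|\hat\theta|^2\leq C(1+|\xi|^2)|\hat q|^2+C|\xi|^4(|\hat v|^2+|\hat v+\tau\hat w|^2),
\]
which is the crucial step producing $|\xi|^2|\hat\theta|^2$ dissipation, but at the price of $|\xi|^4$ weights on lower-order terms — this is exactly the origin of the regularity loss encoded in the denominator $1+|\xi|^2+|\xi|^4$ of $\rC$.

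Finally, I combine these four functionals in the Lyapunov candidate
\[
\cLC(\xi,t):=M_0\hat{\EcC}(\xi,t)+\tfrac{|\xi|^2}{1+|\xi|^2+|\xi|^4}\bigl(M_1\F_1+M_2\F_2+M_3\F_4\bigr),
\]
choosing the positive constants $M_1,M_2,M_3$ and, last, $M_0$ large in the standard order so that every term on the right side of $\tfrac{d}{dt}\cLC$ has a definite negative sign. The chosen weight $|\xi|^2/(1+|\xi|^2+|\xi|^4)$ is bounded above by $1$ (so the added terms are absorbed by $\hat{\EcC}$ for $M_0$ large), by $|\xi|^2$ (needed at low frequency), and by $|\xi|^{-2}$ (needed at high frequency to tame the $|\xi|^4$ loss produced by $\F_4$); this choice makes $\cLC$ equivalent to $\hat{\EcC}$ and yields
\[
\tfrac{d}{dt}\cLC(\xi,t)+c\,\tfrac{|\xi|^2}{1+|\xi|^2+|\xi|^4}\,\cLC(\xi,t)\leq 0,
\]
from which Gronwall's inequality gives \eqref{V_Inequality}. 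The main obstacle is the careful bookkeeping of $\F_4$: getting the $|\xi|^2|\hat\theta|^2$ dissipation inevitably forces $|\xi|^4$-weighted remainders, and the absorption of these remainders is precisely what dictates the use of the weight $|\xi|^2/(1+|\xi|^2+|\xi|^4)$ rather than the simpler $|\xi|^2/(1+|\xi|^2)$ used in the Fourier case of Proposition \ref{Main_Lemma}.
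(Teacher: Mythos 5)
Your overall strategy is the right one and matches the paper's in spirit: augment the energy with a $|\hat q|^2$ term, keep $\F_1$ and $\F_2$, and add a $\theta$--$q$ cross functional to manufacture $|\xi|^2|\hat\theta|^2$ dissipation. There are two small slips (with the paper's normalization $\gamma=\kappa$ the correct energy weight is $\tau_0|\hat q|^2$, obtained by multiplying \eqref{Fifth_equation_system__Fourier_Cattaneo} by $\bar{\hat q}$, not by $\bar{\hat q}/\kappa$, otherwise the $\theta$--$q$ cross terms do not cancel; and the sign of your $\F_4$ is reversed, since $\frac{d}{dt}\bigl[\Re(i\xi\cdot\bar{\hat q}\,\hat\theta)\bigr]+\frac{\kappa}{\tau_0}|\xi|^2|\hat\theta|^2=\cdots$ is the identity with the dissipative sign). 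These are cosmetic. The genuine gap is in the closing of the Lyapunov estimate.

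Differentiating your $\F_4$ produces, via \eqref{Fourth_equation_system_Fourier_Cattaneo}, the cross term $\eta|\xi|^2\Re\langle i\xi(\hat v+\tau\hat w),\hat q\rangle$, of size $|\xi|^3|\hat q|\,|\hat v+\tau\hat w|$. In your scheme the only dissipation available for $|\hat v+\tau\hat w|^2$ is $\frac{|\xi|^2}{1+|\xi|^2+|\xi|^4}M_2(1-\epsilon_1)|\hat v+\tau\hat w|^2\sim |\xi|^{-2}|\hat v+\tau\hat w|^2$ at high frequency, and the only dissipation for $|\hat q|^2$ is the constant--weight $M_0|\hat q|^2$ coming from the energy. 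Whichever way you split $\omega M_3|\xi|^3|\hat q||\hat v+\tau\hat w|$ by Young (with $\omega=|\xi|^2/(1+|\xi|^2+|\xi|^4)$), you are short by a factor $|\xi|^4$: e.g.\ $\omega M_3|\xi|^3|\hat q||\hat v+\tau\hat w|\le \delta\,\omega|\hat v+\tau\hat w|^2+C\,\omega|\xi|^6|\hat q|^2$ leaves a remainder $\sim|\xi|^4|\hat q|^2$ against a dissipation $\sim|\hat q|^2$, while the other splitting leaves $\sim|\xi|^2|\hat v+\tau\hat w|^2$ against $\sim|\xi|^{-2}|\hat v+\tau\hat w|^2$. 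The same obstruction hits your stated bound $\frac{d}{dt}\F_4+c_0\kappa|\xi|^2|\hat\theta|^2\le C(1+|\xi|^2)|\hat q|^2+C|\xi|^4|\hat v+\tau\hat w|^2$: the term $\omega\,C|\xi|^4|\hat v+\tau\hat w|^2\sim|\xi|^2|\hat v+\tau\hat w|^2$ cannot be absorbed. Since $\F_1$ and $\F_2$ contain no $\hat q$, there is no cancellation to invoke, so the functional $\cLC$ as you wrote it does not satisfy a differential inequality of the required form.

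The paper closes this by two devices you are missing. First, it integrates by parts in time, writing $\langle i\xi(\hat v+\tau\hat w),\hat q\rangle=\frac{d}{dt}\langle i\xi(\hat u+\tau\hat v),\hat q\rangle-\langle i\xi(\hat u+\tau\hat v),\hat q_t\rangle$ and using the Cattaneo equation for $\hat q_t$; the time--derivative piece is absorbed into the definition of the cross functional (this is the corrector $\eta\tau_0|\xi|^2\Re\langle i\xi(\hat u+\tau\hat v),\hat q\rangle$ in the paper's $\mathcal{F}_3$), and the remaining terms involve $\hat u+\tau\hat v$ instead of $\hat v+\tau\hat w$. These are then controlled by coupling $\mathcal{F}_3$ with $\kappa|\xi|^2\F_1$ (the paper's $\mathcal{F}_4$), which supplies $|\xi|^4|\hat u+\tau\hat v|^2$ dissipation; a further exact cancellation of the $|\xi|^4\Re((\hat u+\tau\hat v)\bar{\hat\theta})$ terms occurs in this combination. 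Second, instead of damping all auxiliary functionals by $|\xi|^2/(1+|\xi|^2+|\xi|^4)$, the paper multiplies the \emph{energy} by $(1+|\xi|^2+|\xi|^4)$ and uses the unequal weights $|\xi|^2\F_2+\gamma_1\mathcal{F}_4$; the high--frequency $|\xi|^2|\hat q|^2$ remainders are then absorbed by $\gamma_0(1+|\xi|^2+|\xi|^4)|\hat q|^2$, and the factor $(1+|\xi|^2+|\xi|^4)$ in the equivalence $\LC\sim(1+|\xi|^2+|\xi|^4)\hat{\EC}$ is what actually produces the denominator of $\rC$. You would need to incorporate both of these modifications for your argument to go through.
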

The proof of this  the above proposition will be done using the following lemmas, which are similar to those in Section \ref{sec:FourierFunctional}.

\begin{lemma}
\label{dissipa_Energy_Cattaneo} Assume that $0<\tau<\beta$. Let $(\hat{u},\hat{v},\hat{w},\hat{\theta},\hat{q})(\xi,t)$ be the solution of (\ref{System_New_2_Cattaneo}). The energy functional associated to this system is defined as:
\begin{equation}\label{Energy_functional_Cattaneo}
\hat{\EC}(\xi,t):=\frac{1}{2}\left\{ | \hat{v}+\tau\hat{w}|^2+|\xi|^2|\hat{u}+\tau\hat{v}|^2+\tau(\beta-\tau)|\xi|^2|\hat{v}|^2+|\hat{\theta}|^2+\tau_0|\hat{q}|^2\right\}.
\end{equation}
Then for all $t\geq 0$,
$$\hat{\EC}(\xi, t)\geq 0$$
and there exist
two positive constants $d_1$ and $d_2$ such that
\begin{equation}  \label{Positivity_Energy_Cattaneo}
d_1|\hat{\VC}(\xi,t)|^2\leq \hat{\EC}(\xi,t)\leq d_2 |\hat{\VC}(\xi,t)|^2
\end{equation}
and
\begin{equation}  \label{dE_dt_1_Cattaneo}
\frac{d}{dt}\hat{\EC}(\xi, t)=-(\beta-\tau)|\xi|^2|\hat{v}|^2-|\hat{q}|^2,
\end{equation}
where
\begin{equation*}
|\hat{\VC}(\xi, t)|^2=| \hat{v}+\tau\hat{w}|^2+|\xi|^2|\hat{u}+\tau\hat{v}|^2+|\xi|^2|\hat{v}|^2+|\hat{\theta}|^2+|\hat{q}|^2.
\end{equation*}
\end{lemma}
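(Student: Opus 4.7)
The plan is to mirror closely the proof of Lemma \ref{dissipa_Energy} for the Fourier model, adding one new identity coming from the flux equation \eqref{Fifth_equation_system__Fourier_Cattaneo} and verifying that the extra $\theta$--$q$ coupling produced by the Cattaneo term is skew-symmetric and so contributes nothing to the derivative of the energy.

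First I would dispose of the two-sided bound \eqref{Positivity_Energy_Cattaneo}. Every summand of $\hat{\EC}$ is a non-negative quadratic form, and under $0<\tau<\beta$ the coefficient $\tau(\beta-\tau)$ of $|\xi|^2|\hat v|^2$ is strictly positive, while $\tau_0>0$ by hypothesis. Comparing the coefficients of the five building blocks of $|\hat{\VC}|^2$ immediately gives the constants $d_1$ and $d_2$; no computation beyond taking the minimum and maximum of $\{1,\tau(\beta-\tau),\tau_0\}$ (times $\tfrac12$) is required.

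For the dissipation identity \eqref{dE_dt_1_Cattaneo}, I would add the same five weighted identities used in Lemma \ref{dissipa_Energy}, plus one new one. Concretely, after taking real parts, I would combine: (a) the $L^2$-identity for $(\hat v+\tau\hat w)_t$ obtained by summing \eqref{Second_equation_system_Fourier_Cattaneo} and \eqref{Third_equation_system_Fourier_Cattaneo} and multiplying by $\bar{\hat v}+\tau\bar{\hat w}$; (b) $\tau(\beta-\tau)$ times \eqref{Second_equation_system_Fourier_Cattaneo} multiplied by $\bar{\hat v}$; (c) the identity for $(\hat u+\tau\hat v)_t=\hat v+\tau\hat w$ multiplied by $\bar{\hat u}+\tau\bar{\hat v}$; (d) \eqref{Fourth_equation_system_Fourier_Cattaneo} multiplied by $\bar{\hat\theta}$; and (e) the new identity obtained by multiplying \eqref{Fifth_equation_system__Fourier_Cattaneo} by $\bar{\hat q}$, which yields $\tfrac{\tau_0}{2}\tfrac{d}{dt}|\hat q|^2+|\hat q|^2+\kappa\func{Re}(i\xi\hat\theta\cdot\bar{\hat q})=0$. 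Summing (a)+$|\xi|^2$(b)+$|\xi|^2$(c)+(d)+(e), all the interior cross-terms already cancelled in the Fourier case (those in $\func{Re}(\hat w\bar{\hat u})$, $\func{Re}(\hat w\bar{\hat v})$, $\func{Re}(\hat v\bar{\hat u})$ and the $\eta|\xi|^2\func{Re}(\hat\theta\bar{\hat v})$, $\eta\tau|\xi|^2\func{Re}(\hat\theta\bar{\hat w})$ pair) again cancel identically; what remains to check is only the handling of the new coupling.

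The one genuinely new point, and the place I would be most careful, is the cancellation of the coupling between the temperature and the flux. In identity (d) one picks up the term $\kappa\func{Re}(i\xi\cdot\hat q\,\bar{\hat\theta})$ that was absent in the Fourier model, while (e) contributes $\kappa\func{Re}(i\xi\hat\theta\cdot\bar{\hat q})$. Using that $\func{Re}(z)=\func{Re}(\bar z)$ together with $\overline{i\xi\cdot\hat q\,\bar{\hat\theta}}=-i\xi\cdot\bar{\hat q}\,\hat\theta$, I get $\func{Re}(i\xi\cdot\hat q\,\bar{\hat\theta})=-\func{Re}(i\xi\hat\theta\cdot\bar{\hat q})$, so these two terms cancel when (d) and (e) are added. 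This reflects the skew-symmetry (conservative character) of the Cattaneo coupling and is the reason why the only genuinely dissipative terms left in the sum are the viscoelastic dissipation $-(\beta-\tau)|\xi|^2|\hat v|^2$ (present already under $0<\tau<\beta$) and the new flux dissipation $-|\hat q|^2$ from (e), yielding exactly \eqref{dE_dt_1_Cattaneo}. I expect this skew cancellation to be the only step requiring any actual verification; the rest is a word-for-word transcription of the proof of Lemma \ref{dissipa_Energy}.
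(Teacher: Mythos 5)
Your proposal is correct and is exactly the argument the paper intends: the authors omit the proof, stating only that it is "similar to the one of Lemma \ref{dissipa_Energy}", and your five weighted identities (a)--(e) with the combination (a)$+|\xi|^2$(b)$+|\xi|^2$(c)$+$(d)$+$(e) reproduce that computation, with the new $\theta$--$q$ coupling terms $-\kappa\func{Re}(i\xi\cdot\hat q\,\bar{\hat\theta})$ and $-\kappa\func{Re}(i\xi\hat\theta\cdot\bar{\hat q})$ indeed cancelling by the skew-symmetry you verify, leaving precisely $-(\beta-\tau)|\xi|^2|\hat v|^2-|\hat q|^2$. The coefficient comparison giving $d_1=\tfrac12\min\{1,\tau(\beta-\tau),\tau_0\}$ and $d_2=\tfrac12\max\{1,\tau(\beta-\tau),\tau_0\}$ is likewise correct.
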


The proof of this Lemma \ref{dissipa_Energy_Cattaneo} is similar to the one of
Lemma \ref{dissipa_Energy}. So, we omit the details.

\begin{proof}[Proof of Proposition \ref{Main_Lemma_Cattaneo}]
Using the same method as in the proof of Proposition \ref{Main_Lemma}, we observe
first that the estimates (\ref{dF_1_dt}) for $\F_1(\xi,t)$ and (\ref{dF_2_dt_1})  for $\F_2(\xi,t)$
remain valid for the new system (\ref{Main_First_Order_system_Cattaneo}). Also, \eqref{dF_2_dt_1} can be also estimated as
\begin{eqnarray}\label{dF_2_dt_1_Cattaneo}
&&\frac{d}{dt}\F_2(\xi,t)+(1-\varepsilon_1)|\hat{v}+\tau \hat{w}|^2\notag \\
&\leq&
C(\varepsilon_0,\varepsilon_1,\varepsilon_2)(1+|\xi|^2+|\xi|^4)|\hat{v}|^2+\varepsilon_0|\hat{\theta}|^2+\varepsilon_2\frac{|\xi|^2}{1+|\xi|^2}|\hat{u}+\tau \hat{v}|^2
\end{eqnarray}
for any $\varepsilon_0,\varepsilon_1,\varepsilon_2>0$.

The previous estimate comes from the fact that:
\begin{eqnarray*}\label{F_2_Cattaneo}
&&\frac{d}{dt}\F_2(\xi,t)+|\hat{v}+\tau \hat{w}|^2-\tau (\beta-\tau )|\xi|^2|\hat{v}|^2\notag\\
&=&\tau |\xi|^2\func{Re}\left\{(\hat{u}+\tau \hat{v})\bar{\hat{v}}\right\}+\func{Re}\left\{(\hat{v}+\tau \hat{w})\bar{\hat{v}}\right\}-\eta \tau |\xi|^2\func{Re}(\hat{\theta}\bar{\hat{v}})
\end{eqnarray*}
(see Lemma \ref{Lemma_F_2}). Using Young's inequality, we have
\begin{eqnarray*}
|\tau |\xi|^2\func{Re}\left\{(\hat{u}+\tau \hat{v})\bar{\hat{v}}\right\}|\leq \varepsilon_2 \frac{|\xi|^2}{1+|\xi|^2} |\hat{u}+\tau \hat{v}|^2+C(\varepsilon_2) |\xi|^2(1+|\xi|^2)|\hat{v}|^2
\end{eqnarray*}
 and
 \begin{equation*}
|-\eta \tau |\xi|^2\func{Re}(\hat{\theta}\bar{\hat{v}})|\leq \varepsilon_0 |\hat{\theta}|^2+C(\varepsilon_0)|\xi|^4 |\hat{v}|^2
\end{equation*}
and
\begin{equation*}
|\func{Re}\left\{(\hat{v}+\tau \hat{w})\bar{\hat{v}}\right\}|\leq \varepsilon_1 |(\hat{v}+\tau \hat{w})|^2+C(\varepsilon_1) |\hat{v}|^2.
\end{equation*}
Collecting the above estimates, we obtain \eqref{dF_2_dt_1_Cattaneo}.


Our goal now is to build dissipative terms for $|\hat{\theta}|^{2}$. Indeed,
multiplying equation (\ref{Fifth_equation_system__Fourier_Cattaneo}) by $-i\xi %
\hat{\theta}$ we get%
\begin{equation}\label{q_dot_product}
-\tau_0\langle \hat{q}_t,i\xi%
\hat{\theta}\rangle-\langle\hat{q}, %
i\xi\hat{\theta}\rangle+\kappa|\xi|^2  |\hat{\theta}|^2=0,
\end{equation}
where $\langle \cdot,\cdot\rangle$ is the  dot product in $\C^n$. Multiplying  (\ref{Fourth_equation_system_Fourier_Cattaneo}) by $%
\tau _{0}i\xi\cdot  \bar{\hat{q}}=\tau _{0}\langle i\xi,\hat{q}\rangle $, we get
\begin{equation}\label{theta_dot_product}
\tau _{0}\langle i\xi\hat{\theta} _{t},  \hat{q}\rangle-\kappa\tau_0 |\xi\cdot \hat{q}|^2+\eta\tau \tau _{0}|\xi|^2 \langle i\xi\hat{w},  \hat{q}\rangle+\tau _{0}\eta |\xi|^2\langle i\xi \hat{v},  \hat{q}\rangle.
\end{equation}
Summing up \eqref{q_dot_product} and \eqref{theta_dot_product}, and taking the real part, we get
\begin{equation}\label{dF_3_dt}
\frac{d}{dt}\tau _{0}\func{Re}\langle i\xi\hat{\theta},  \hat{q}\rangle+\kappa|\xi|^2  |\hat{\theta}|^2=\tau_0\kappa |\xi\cdot \hat{q}|^2+\func{Re}\langle\hat{q}, i\xi %
+\hat{\theta}\rangle
-\eta\tau_0|\xi|^2\func{Re} \langle i\xi(\hat{v}+\tau \hat{w}) ,  \hat{q}\rangle.
\end{equation}
 Now, by using first \eqref{First_equation_system_Fourier_Cattaneo} and \eqref{Second_equation_system_Fourier_Cattaneo} and, then, \eqref{Fifth_equation_system__Fourier_Cattaneo}, we have
 \begin{eqnarray*}
\langle i\xi(\hat{v}+\tau \hat{w}),  \hat{q}\rangle&=&\langle  i\xi(\hat{u}_t+\tau \hat{v}_t),  \hat{q}\rangle\notag\\
&=&\frac{d}{dt} \langle i\xi(\hat{u}+\tau \hat{v}),  \hat{q}\rangle\notag- \langle i\xi(\hat{u}+\tau \hat{v}),  \hat{q}_t\rangle\notag\\
&=& \frac{d}{dt} \langle i\xi(\hat{u}+\tau \hat{v}),  \hat{q}\rangle\notag+\frac{1}{\tau_0} \langle i\xi(\hat{u}+\tau \hat{v}),\hat{q}\rangle +\frac{\kappa}{\tau_0} \langle i\xi(\hat{u}+\tau \hat{v}),i\xi\hat{\theta}\rangle.
\end{eqnarray*}
(recall that $\tau_0>0$). Plugging these into \eqref{dF_3_dt}, we get

\begin{eqnarray}\label{dF_3_dt_2}
\frac{d}{dt}\mathcal{F}_3(\xi,t)+\kappa|\xi|^2  |\hat{\theta}|^2&=&\tau_0\kappa |\xi\cdot \hat{q}|^2+\func{Re}\langle\hat{q}, i\xi %
\hat{\theta}\rangle\notag\\
&&
-\eta|\xi|^2 \func{Re}\langle i\xi(\hat{u}+\tau \hat{v}),\hat{q}\rangle
 -\kappa\eta|\xi|^2\func{Re} \langle i\xi(\hat{u}+\tau \hat{v}),i\xi\hat{\theta}\rangle
\end{eqnarray}
where
\begin{equation*}
\mathcal{F}_3(t)=\tau _{0}\func{Re}\langle i\xi\hat{\theta},  \hat{q}\rangle+\eta\tau_0|\xi|^2\func{Re} \langle i\xi(\hat{u}+\tau \hat{v}),  \hat{q}\rangle.
\end{equation*}
Now, we see that
\begin{equation*}
\langle i\xi(\hat{u}+\tau \hat{v}),i\xi\hat{\theta}\rangle = ( \hat{u}+\tau \hat{v})\bar{\hat{\theta} }\langle i\xi,i\xi \rangle = (\hat{u}+\tau \hat{v})\bar{\hat{\theta} } |\xi|^2.
\end{equation*}
Consequently, \eqref{dF_3_dt_2} takes the form

\begin{eqnarray}\label{dF_3_dt_3}
\frac{d}{dt}\mathcal{F}_3(\xi,t)+\kappa|\xi|^2  |\hat{\theta}|^2&=&\tau_0\kappa |\xi\cdot \hat{q}|^2+\func{Re}\langle\hat{q}, i\xi %
\hat{\theta}\rangle\notag\\
&&
-\eta|\xi|^2 \func{Re}\langle i\xi(\hat{u}+\tau \hat{v}),\hat{q}\rangle
 -\kappa\eta|\xi|^4\func{Re}( ( \hat{u}+\tau \hat{v})\bar{\hat{\theta} }).
\end{eqnarray}
Now, we define the functional
\begin{equation*}
\mathcal{F}_4(\xi, t)=\kappa |\xi|^2F_1(\xi,t)+\mathcal{F}_3(\xi,t).
\end{equation*}
 Then, we have from \eqref{F_1_main} and \eqref{dF_3_dt_3}

  \begin{eqnarray}\label{dF_4_dt_1}
  &&\frac{d}{dt}\mathcal{F}_4(\xi,t)+\kappa|\xi|^2  |\hat{\theta}|^2+\kappa|\xi|^4|\hat{u}+\tau \hat{v}|^2-\kappa |\xi|^2|\hat{v}+\tau \hat{w}|^2\notag\\
&=&\tau_0\kappa |\xi\cdot \hat{q}|^2+\func{Re}\langle\hat{q}, i\xi %
\hat{\theta}\rangle-\eta|\xi|^2 \func{Re}\langle i\xi(\hat{u}+\tau \hat{v}),\hat{q}\rangle\notag\\
&&+\kappa |\xi|^4(\tau -\beta)\func{Re}(\hat{v}(\bar{\hat{u}}+\tau \bar{\hat{v}})).
\end{eqnarray}
Using Cauchy--Schwarz  inequality together with Young's inequality, we have
\begin{eqnarray*}
&&|\xi\cdot \hat{q}|^2\leq |\xi|^2|\hat{q}|^2\vspace{0.2cm}\\
&& |\langle\hat{q}, i\xi
\hat{\theta}\rangle|\leq |\xi||\hat{\theta}||\hat{q}|\leq \varepsilon_3 |\xi|^2|\hat{\theta}|^2+C(\varepsilon_3)|\hat{q}|^2
\end{eqnarray*}
and
\begin{eqnarray*}
\left|-\eta|\xi|^2\func{Re} \langle i\xi(\hat{u}+\tau \hat{v}),  \hat{q}\rangle\right|
\leq\frac{ \varepsilon_4}{2} |\xi|^4| \hat{u}+\tau \hat{v}|^2+C(\varepsilon_4)|\xi|^2|\hat{q}|^2,\\
\left|\kappa |\xi|^4(\tau -\beta)\func{Re}(\hat{v}(\bar{\hat{u}}+\tau \bar{\hat{v}}))\right|
\leq \frac{ \varepsilon_4}{2} |\xi|^4|\hat{u}+\tau \hat{v}|^2+C(\varepsilon_4) |\xi|^4 | \hat{v}|^2
\end{eqnarray*}
where $\varepsilon_3$ and $\varepsilon_4$ are arbitrary positive constants. Plugging the above estimates into \eqref{dF_4_dt_1}, we get
\begin{eqnarray}\label{dF_4_dt_main}
&&\frac{d}{dt}\mathcal{F}_4(t)+(\kappa-\varepsilon_3)|\xi|^2  |\hat{\theta}|^2+(\kappa-\varepsilon_4)|\xi|^4|\hat{u}+\tau \hat{v}|^2\notag\\
&\leq& \kappa |\xi|^2|\hat{v}+\tau\hat{w}|^2+C(\varepsilon_3,\varepsilon_4)(1+|\xi|^2)|\hat{q}|^2+C(\varepsilon_4)|\xi|^4|\hat{v}|^2.
\end{eqnarray}
Now, define the functional $\LC(\xi,t)$ as
\begin{equation*}
\LC(\xi, t):=\gamma_0(1+|\xi| ^{2}+|\xi|^4)\hat{\EC}(\xi, t)+|\xi|^2\F_2(\xi, t) +\gamma_1 \mathcal{F}_4(\xi, t),  \label{Functional_L_Cattaneo}
\end{equation*}%
where $\gamma_0$ and $\gamma_1 $ are positive constants that
will be fixed later. Taking the derivative of $\LC(\xi ,t)$ with respect to $t$
and using (\ref{dE_dt_1_Cattaneo}), (\ref%
{dF_2_dt_1_Cattaneo}) and (\ref{dF_4_dt_main}), we obtain%
\begin{eqnarray*}\label{dK_dt_1}
&&\frac{d}{dt}\LC(\xi,t)+\Big((1-\varepsilon_1)-\gamma_1\kappa\Big)|\xi|^2|\hat{v}+\tau \hat{w}|^2\notag\\
&&+\Big(\gamma_1(\kappa-\varepsilon_4)-\varepsilon_2\Big)|\xi|^4(|\hat{u}+\tau \hat{v}|^2)
+\Big(\gamma_1(\kappa-\varepsilon_3)-\varepsilon_0\Big)|\xi|^2|\hat{\theta}|^2\notag\\
&&+\Big(\gamma_0(\beta-\tau )-C(\varepsilon_0,\varepsilon_1,\varepsilon_2,\varepsilon_4)\gamma_1 \Big)|\xi|^2(1+|\xi|^2+|\xi|^4)|\hat{v}|^2
\notag\\
&&+(\gamma_0-C(\varepsilon_3,\varepsilon_4)\gamma_1)(1+|\xi|^2+|\xi|^4)|\hat{q}|^2
\leq 0,
\end{eqnarray*}
where $\Lambda$ is a generic positive constant that depends on $\varepsilon_i$ for $i=1,\ldots,4$ and $\gamma_j$ for $j=0,1$, but independent on $t$  and $\xi$. We  have used the fact that $|\xi| ^{2}/(1+|\xi| ^{2})\leq |\xi |^{2}.$ Let us now fix the constants in the above estimate in such a way that all the coefficients in the previous inequality are strictly positive. First, we pick $\varepsilon_1$ and $\varepsilon_3$ and $\varepsilon_4$ small enough such that
\begin{equation*}
\varepsilon_1<1,\quad \varepsilon_3<\kappa,\quad\varepsilon_4<\kappa
\end{equation*}
After that, we choose $\gamma_1$ small enough such that
\begin{equation*}
\gamma_1<\frac{1-\varepsilon_1}{\kappa}.
\end{equation*}
Next, we choose $\varepsilon_0$ and $\varepsilon_2$ small enough such that
\begin{equation*}
\varepsilon_0<\gamma_1(\kappa-\varepsilon_3),\qquad \varepsilon_2<\gamma_1(\kappa-\varepsilon_4).
\end{equation*}
Finally and once all the above constants are fixed, we choose $\gamma_0$ large enough such that
\begin{equation*}
\gamma_0>\max\left\{\frac{C(\varepsilon_4)\gamma_1}{\beta-\tau},C(\varepsilon_3,\varepsilon_4)\gamma_1\right\}.
\end{equation*}
Consequently, we deduce that there exists a positive constant $\gamma_2$ such that for all $t\geq 0$, we have
\begin{eqnarray*}
&&\frac{d}{dt}\LC(\xi,t)+\gamma_2 |\xi|^2
\left\{|\hat{v}+\tau \hat{w}|^2+|\xi|^2|\hat{u}+\tau \hat{v}|^2+|\hat{\theta}|^2+|\xi|^2|\hat{v}|^2+|\hat{q}|^2 \right\}
\leq 0,
\end{eqnarray*}%
and, hence,
\begin{eqnarray}\label{K_Estimate_Main}
&&\frac{d}{dt}\LC(\xi,t)+\gamma_2 |\xi|^2 \hat{\EC}(t)
\leq 0.
\end{eqnarray}%
On the other hand using the Cauchy--Schwarz inequality  together with the Young inequality, we deduce that, for $\gamma_0$ large enough, there exist two positive constants $\gamma_3$ and $\gamma_4$ such that
\begin{equation}\label{Equivalence_E_L_Cattaneo}
\gamma_3(1+|\xi|^2+|\xi|^4)\hat{\EC}(\xi,t)\leq \LC(\xi,t)\leq \gamma_4(1+|\xi|^2+|\xi|^4)\hat{\EC}(\xi,t), \qquad \forall t\geq 0.
\end{equation}
Consequently, combining (\ref{Positivity_Energy_Cattaneo}), (\ref{K_Estimate_Main}) and (\ref{Equivalence_E_L_Cattaneo}) and using Gronwall's lemma, we deduce (\ref{V_Inequality}) because $\LC(\xi,t)$ and $|\VC(\xi,t)|^2$ are equivalent.
\end{proof}

\begin{proof}[Proof of Theorem \ref{Decay_Second_system_Cattaneo}]
The proof of Theorem \ref{Decay_Second_system_Cattaneo} can be done with the same method as in the proof of Theorem \ref{Decay_Second_system}. We can just see that the  behavior of the function $\rC(\xi)$ is as follows:
\begin{equation*}\label{varrho_behavior}
\rC(\xi)\geq\left\{
\begin{array}{ll}
\frac{1}{3}|\xi|^2,& \text{for } |\xi|\leq 1,\vspace{0.2cm} \\
\frac{1}{3}|\xi|^{-2}, & \text{for } |\xi|\geq 1,
\end{array}%
\right. .
\end{equation*}%
Consequently, the estimate of the low frequency part $(|\xi|\leq 1)$ can be done exactly as in the proof of Theorem \ref{Decay_Second_system}. On the other hand, using the estimate
\begin{equation*}\label{sup_inequality}
\sup_{|\xi|\geq 1} \left\{ \left\vert \xi \right\vert ^{-2\ell}e^{-c|\xi|
^{-2}t}\right\}\leq C(1+t)^{-\ell},
\end{equation*}
then the high frequency part $L_2$ corresponding to $|\xi|\geq 1$ is estimated as
\begin{eqnarray*}\label{L_2_estimate_C}
L_{2}
&\leq &C\sup_{|\xi|\geq 1} \left\{ \left\vert \xi \right\vert ^{-2\ell}e^{-c|\xi|^{-2} t}\right\}  \int_{\left\vert \xi \right\vert \geq 1}\left\vert \xi
\right\vert ^{2(k+\ell)}\vert \hat{\VC}(\xi ,0)\vert
^{2}d\xi\notag \\
&\leq &C(1+t)^{-\ell}\Vert \partial _{x}^{k+\ell}\VC^0\Vert _{L^{2}}^{2}.
\end{eqnarray*}
Collecting the estimates in the low and high frequency parts, we deduce (\ref{Main_estimate_Theorem_2}).
\end{proof}
\begin{remark}
The estimates in Theorems \ref{Decay_Second_system} and \ref{Decay_Second_system_Cattaneo} can be improved by allowing our initial data to be in some weighted spaces of $L^1(\R^N)$ where its total mass vanishes. See \cite{RaSa11_1,SaidKasi_2011} and \cite[Remark 3.2]{Racke_Said_2012_1}.
\end{remark}
\begin{remark}
Using the Gagliardo--Nirenberg inequality for $m>N/2$:
\begin{equation*}
\Vert u\Vert_{L^\infty (\R^N)}\leq C(N,m) \Vert u\Vert_{L^2(\R^N)}^{1-\frac{N}{2m}}\Vert \nabla^m u\Vert_{L^2(\R^N)}^{\frac{N}{2m}}
\end{equation*}
 then we can easily obtain the  decay estimates of the $L^\infty$-norm.
Hence, by  interpolation
inequalities, we obtain the  decay of the $L^p$-norms for $2\leq p\leq \infty$.
Such estimates for $1\leq p<2$ are not clear because we do not know any
bound for the $L^1$-norm of the solution.

\end{remark}

\subsubsection{Asymptotic behaviour of the eigenvalues}\label{Section_Asym_Cattaeo}

In this section, we compute the asymptotic expansion of the eigenvalues by using again the eigenvalues expansion method. This asymptotic behaviour will be in agreement with the decay rate seen in the previous section, as the following remark explains.
\begin{remark}\label{rmk:expansion3}
The decay rate in Theorem \ref{Decay_Second_system_Cattaneo} comes from the exponent
$$\rC(\xi)= \dfrac{|\xi|^2}{1+|\xi|^2+|\xi|^4}$$
of Proposition \ref{Main_Lemma_Cattaneo}. Observe that $\rC(\xi)\sim |\xi|^2$ when $|\xi|\to 0$, and $\rC(\xi)\sim |\xi|^{-2}$ when $|\xi|\to \infty$. This is the asymptotic behaviour we expect for the real parts of the slowest  eigenvalues in these cases (see Lemmas \ref{Eigenvalues_Expansion_Lemma} and \ref{Eigenvalues_Expansion_Lemma_2} below).
\end{remark}

The first thing we want is to compute the characteristic equation associated with \eqref{System_New_2_Cattaneo}. Because of the presence of inner products, this equation is not as easy to obtain as in the Fourier case. But we  proceed with some manipulations on this system in order to write it as a fifth order ODE, for which we will compute the corresponding characteristic equation straightforward.

First, by taking the dot product of equation \eqref{Fifth_equation_system__Fourier_Cattaneo} with $i\xi$, we get
\begin{equation}\label{Eq_5_1}
\tau_0 i\xi\cdot \hat{q}_t+i\xi\cdot\hat{q}-\kappa|\xi|^2  \hat{\theta}=0.
\end{equation}
Next, we take the time derivative of equation \eqref{Fourth_equation_system_Fourier_Cattaneo}, obtaining
\begin{equation}\label{Eq_4_1}
\hat{\theta} _{tt}+\kappa i\xi\cdot \hat{q}_t+\eta\tau|\xi|^2 \hat{w}_t+\eta |\xi|^2\hat{v}_t=0.
\end{equation}
Plugging \eqref{Eq_5_1} into \eqref{Eq_4_1} gives
\begin{equation}\label{Eq_4_2_1}
\hat{\theta} _{tt}-\frac{\kappa }{\tau_0}(i\xi\cdot \hat{q})+\frac{\kappa^2}{\tau_0}|\xi|^2\hat{\theta}+\eta\tau|\xi|^2 \hat{w}_t+\eta |\xi|^2\hat{v}_t=0.
\end{equation}
Using \eqref{Fourth_equation_system_Fourier_Cattaneo}, we obtain
\begin{equation}\label{Equ_4_3}
-\frac{\kappa}{\tau_0} i\xi\cdot \hat{q}=\frac{1}{\tau_0}\hat{\theta} _{t}+\frac{1}{\tau_0}\eta\tau|\xi|^2 \hat{w}+\frac{1}{\tau_0}\eta |\xi|^2\hat{v}.
\end{equation}
Inserting \eqref{Equ_4_3} into \eqref{Eq_4_2_1}, we get
\begin{equation}\label{Eq_4_2_2}
\hat{\theta} _{tt}+\frac{1}{\tau_0}\hat{\theta} _{t}+\frac{1}{\tau_0}\eta\tau|\xi|^2 \hat{w}+\frac{1}{\tau_0}\eta |\xi|^2\hat{v}+\frac{\kappa^2}{\tau_0}|\xi|^2\hat{\theta}+\eta\tau|\xi|^2 \hat{w}_t+\eta |\xi|^2\hat{v}_t=0.
\end{equation}
Now, taking the second derivative of \eqref{Third_equation_system_Fourier_Cattaneo}, with respect to $t$, we get

\begin{equation}\label{Equation_3_1}
\tau\hat{w}_{ttt}+\hat{w}_{tt}+|\xi|^2\hat{u}_{tt}+\beta |\xi|^2\hat{v}_{tt}-\eta|\xi|^2\hat{\theta}_{tt}=0.
\end{equation}
 Plugging \eqref{Eq_4_2_2} into \eqref{Equation_3_1}, we obtain
 \begin{eqnarray}\label{Eq_4_2_3}
&&\tau\hat{w}_{ttt}+\hat{w}_{tt}+|\xi|^2\hat{u}_{tt}+\beta |\xi|^2\hat{v}_{tt}+\frac{\eta}{\tau_0}|\xi|^2\hat{\theta} _{t}+\frac{\eta^2\tau}{\tau_0} |\xi|^4\hat{w}+\frac{\eta^2}{\tau_0} |\xi|^4\hat{v}\notag\\
&&+\frac{\eta\kappa^2}{\tau_0}|\xi|^4\hat{\theta}+\eta^2\tau|\xi|^4 \hat{w}_t+\eta^2 |\xi|^4\hat{v}_t=0.
\end{eqnarray}
Now, taking the derivative of \eqref{Third_equation_system_Fourier_Cattaneo} with respect to $t$, we get
\begin{equation}\label{Eq_3_3}
\eta|\xi|^2\hat{\theta}_t=\tau\hat{w}_{tt}+\hat{w}_t+|\xi|^2\hat{u}_t+\beta |\xi|^2\hat{v}_t
\end{equation}
Plugging \eqref{Eq_3_3} and \eqref{Third_equation_system_Fourier_Cattaneo} into \eqref{Eq_4_2_3}, we obtain
\begin{eqnarray}\label{Eq_4_2_4}
&&\tau\hat{w}_{ttt}+\hat{w}_{tt}+|\xi|^2\hat{u}_{tt}+\beta |\xi|^2\hat{v}_{tt}+\frac{\eta^2\tau}{\tau_0} |\xi|^4\hat{w}+\frac{\eta^2}{\tau_0} |\xi|^4\hat{v}\notag\\
&&+\eta^2\tau|\xi|^4 \hat{w}_t+\eta^2 |\xi|^4\hat{v}_t\notag\\
&&+\frac{1}{\tau_0}(\tau\hat{w}_{tt}+\hat{w}_t+|\xi|^2\hat{u}_t+\beta |\xi|^2\hat{v}_t
)\notag\\
&&+\frac{\kappa^2}{\tau_0}|\xi|^2(\tau\hat{w}_{t}+\hat{w}+|\xi|^2\hat{u}+\beta |\xi|^2\hat{v})=0 .
\end{eqnarray}
Now, we take the fourth  derivative of \eqref{First_equation_system_Fourier_Cattaneo} and the third derivative of \eqref{Second_equation_system_Fourier_Cattaneo}, we obtain, respectively
\begin{equation}\label{equ_5_u}
\frac{d^5}{dt^5} \hat{u}=\frac{d^4}{dt^4} \hat{v}=w_{ttt}.
\end{equation}
Now, plugging \eqref{equ_5_u} into \eqref{Eq_4_2_4}, we obtain
\begin{eqnarray*}
&&\tau \frac{d^5}{dt^5} \hat{u}+(1+\frac{\tau}{\tau_0}) \frac{d^4}{dt^4} \hat{u}+\left(\beta |\xi|^2+\frac{1}{\tau_0}+\eta^2\tau|\xi|^4+\frac{\kappa^2\tau}{\tau_0}|\xi|^2\right) \hat{u}_{ttt}\\
&&+\left(|\xi|^2+\frac{\eta^2\tau}{\tau_0} |\xi|^4+\frac{\kappa^2}{\tau_0}|\xi|^2+\eta^2 |\xi|^4+\frac{\beta}{\tau_0}|\xi|^2\right) \hat{u}_{tt}\\
&&+\left(\frac{\eta^2}{\tau_0} |\xi|^4+\frac{1}{\tau_0}|\xi|^2+\frac{\kappa^2\beta }{\tau_0}|\xi|^4\right)\hat{u}_t+\frac{\kappa^2}{\tau_0}|\xi|^4\hat{u}=0.
\end{eqnarray*}
So, we have written system \eqref{System_New_2_Cattaneo} as a fifth order ODE.  Now, if we name $\zeta=i|\xi|$, the characteristic equation associate to the above ODE (and hence to system \eqref{System_New_2_Cattaneo}) is
\begin{eqnarray}\label{charpol_Cattaneo}
&& \lambda^5+
\Big(\frac{1}{\tau_0}+\frac{1}{\tau}\Big)\lambda^4 +\frac{\tau\eta^2\tau_0\zeta^4-\left(\tau \kappa^2+\beta\tau_0\right)\zeta^2+1}{\tau\tau_0}\lambda^3  \\
\nonumber
&& +\frac{\left(\tau+\tau_0\right)\eta^2\zeta^2-\left(\beta+\tau_0+\kappa^2\right)}{\tau\tau_0}\,\zeta^2\lambda^2 +\frac{\left(\eta^2+\beta\kappa^2\right)\zeta^2-1}{\tau\tau_0}\,\zeta^2\lambda +\frac{\kappa^2\zeta^4}{\tau\tau_0}=0.
\end{eqnarray}
\begin{lemma}\label{Eigenvalues_Expansion_Lemma}
 Assume that $0<\tau<\beta$.  Then the  real parts of the eigenvalues of \eqref{System_New_2_Cattaneo} (i.e., the solutions of \eqref{charpol_Cattaneo}) satisfy for $|\xi|\rightarrow 0$ the following  asymptotic expansion:

\begin{eqnarray}\label{Exp_Near_0_Cattaneo}
\left\{
\begin{array}{ll}
  \func{Re}(\lambda_{1,2})(\xi) &= \dfrac{\tau-\beta}{2}|\xi|^2+O(|\xi|^3)\vspace{0.2cm} \\
  \func{Re}(\lambda_{3})(\xi) &= -\kappa^2|\xi|^2+O(|\xi|^3) \\
  \func{Re}(\lambda_{4})(\xi) &= -\dfrac{1}{\tau}+O(|\xi|^2)\vspace{0.2cm}\\
  \func{Re}(\lambda_{5})(\xi) &= -\dfrac{1}{\tau_0}+O(|\xi|^2).
  \end{array}
  \right.
\end{eqnarray}
\end{lemma}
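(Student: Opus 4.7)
The plan is to mirror the argument used in Lemmas~\ref{lemma:eig0_Fourier1} and \ref{lemma:eig0_Fourier2}: substitute a formal power series ansatz $\lambda(\zeta)=\lambda^0+\lambda^1\zeta+\lambda^2\zeta^2+\cdots$ into the characteristic polynomial \eqref{charpol_Cattaneo} and match coefficients order by order in $\zeta$, recalling at the end that $\zeta=i|\xi|$ so that $\func{Re}(\zeta)=0$ and $\zeta^2=-|\xi|^2$. Setting $\zeta=0$ in \eqref{charpol_Cattaneo} collapses it to $\lambda^3(\lambda+1/\tau)(\lambda+1/\tau_0)=0$, producing a triple root at the origin together with two simple roots $\lambda_4^0=-1/\tau$ and $\lambda_5^0=-1/\tau_0$.

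Since every power of $\zeta$ appearing in \eqref{charpol_Cattaneo} is even, the polynomial is analytic in $(\lambda,\zeta^2)$, and the implicit function theorem immediately extends the two simple roots to analytic branches $\lambda_4(\zeta)=-1/\tau+O(\zeta^2)$ and $\lambda_5(\zeta)=-1/\tau_0+O(\zeta^2)$, which, via $\zeta^2=-|\xi|^2$, yield the last two lines of \eqref{Exp_Near_0_Cattaneo}. For the triple root at $\lambda=0$, I would try the Puiseux ansatz $\lambda=a_1\zeta+a_2\zeta^2+\cdots$; multiplying \eqref{charpol_Cattaneo} by $\tau\tau_0$ and isolating the lowest-order ($\zeta^3$) terms, only the $\lambda^3$ summand (contributed by the constant $1$ inside its coefficient) and the $-\zeta^2\lambda$ summand survive, giving the indicial equation $a_1^3-a_1=0$, hence $a_1\in\{0,\pm1\}$. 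The slow branch $a_1=0$ forces a rescaling $\lambda=b\zeta^2+O(\zeta^3)$; matching the $\zeta^4$ terms, where again only $-\zeta^2\lambda$ and the constant $\kappa^2\zeta^4$ term contribute, yields $b=\kappa^2$, and therefore $\func{Re}(\lambda_3)(\xi)=-\kappa^2|\xi|^2+O(|\xi|^3)$.

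For the two wave-like branches with $a_1=\pm1$, I would substitute $\lambda=\pm\zeta+a_2\zeta^2+O(\zeta^3)$ and carefully collect the coefficient of $\zeta^4$. Organising $\tau\tau_0$ times \eqref{charpol_Cattaneo} as $P_0(\lambda)+\zeta^2 P_2(\lambda)+\zeta^4 P_4(\lambda)$ and expanding $P_k$ around $\pm\zeta$, the only surviving contributions are those from $(\tau+\tau_0)\lambda^4$, from the $\lambda^3$ piece (which gives $3a_2$), from $-(\beta+\tau_0+\kappa^2)\zeta^2\lambda^2$, from $-\zeta^2\lambda$ (which gives $-a_2$), and from $\kappa^2\zeta^4$; their sum reduces to $\tau+2a_2-\beta=0$, hence $a_2=(\beta-\tau)/2$. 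Taking real parts with $\zeta=i|\xi|$ annihilates the $\pm\zeta$ term and turns $a_2\zeta^2$ into $-a_2|\xi|^2$, producing $\func{Re}(\lambda_{1,2})(\xi)=\tfrac{\tau-\beta}{2}|\xi|^2+O(|\xi|^3)$.

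The main obstacle is the bookkeeping in this last step: one has to confirm first that the $\zeta^3$ contributions indeed cancel (a sanity check for $a_1=\pm1$) and then verify that none of the remaining $\zeta^4$ contributions from $P_2$ and $P_4$, which are numerous, is overlooked. Organising the computation in the $P_0+\zeta^2P_2+\zeta^4P_4$ form reduces this to a single linear equation for $a_2$, and all asymptotics in \eqref{Exp_Near_0_Cattaneo} follow. Negativity of the four real parts is then immediate from the hypothesis $0<\tau<\beta$ (and $\kappa,\tau,\tau_0>0$).
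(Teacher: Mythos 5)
Your proposal is correct and follows essentially the same route as the paper: the paper's proof simply asserts the expansions $\lambda_{1,2}=\pm\zeta+\tfrac{\beta-\tau}{2}\zeta^2+O(\zeta^3)$, $\lambda_3=\kappa^2\zeta^2+O(\zeta^3)$, $\lambda_{4}=-1/\tau+O(\zeta^2)$, $\lambda_5=-1/\tau_0+O(\zeta^2)$ and converts back via $\zeta=i|\xi|$, whereas you supply the underlying computation (factorisation at $\zeta=0$, implicit function theorem for the simple roots, Newton-polygon/Puiseux analysis of the triple root with indicial equation $a_1^3-a_1=0$, and the coefficient match $\tau+2a_2-\beta=0$), all of which checks out. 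Your $P_0+\zeta^2P_2+\zeta^4P_4$ bookkeeping reproduces exactly the coefficients stated in the paper, so this is a correct, more detailed version of the paper's argument.
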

\begin{proof}
Denoting the eigenvalues of the previous system as $\lambda_j(\zeta)$, $j=1,\ldots,5$, we can obtain their asymptotic expansions for $\zeta\to 0$:
\begin{eqnarray*}
  \lambda_{1,2}(\zeta) &=& \pm \zeta -\frac{\tau-\beta}{2}\zeta^2+O(\zeta^3), \\
  \lambda_{3}(\zeta) &=& \kappa^2 \zeta^2+O(\zeta^3), \\
  \lambda_{4}(\zeta) &=& -\frac{1}{\tau}+O(\zeta^2),\\
  \lambda_{5}(\zeta) &=& -\frac{1}{\tau_0}+O(\zeta^2).
\end{eqnarray*}

If we now come back to $\xi$, we have the asymptotic behavior \eqref{Exp_Near_0_Cattaneo} for the real parts of the previous eigenvalues when $|\xi|\to 0$.
Observe that when $0<\tau<\beta$ (dissipative case for the standard linear model) all the previous real parts are negative.
\end{proof}


We now proceed with the asymptotic behaviour of the eigenvalues when $|\xi|\to\infty$.
We have the following lemma.
\begin{lemma}\label{Eigenvalues_Expansion_Lemma_2}
 The  real parts of the eigenvalues of \eqref{System_New_2_Cattaneo} (i.e., the solutions of \eqref{charpol_Cattaneo}) satisfy for $|\xi|\rightarrow \infty$, the following  asymptotic expansion:
 \begin{equation*}\label{Asym_Expan_Infty_Cattaneo}
\left\{
\begin{array}{ll}
  \func{Re}(\lambda_{1,2})(\xi) &= -\dfrac{(\beta-\tau)\tau_0^2+\kappa^2\tau^2}{2\tau^2\eta^2\tau_0^2} |\xi|^{-2} +O(|\xi|^{-3}), \vspace{0.3cm}\\
  \func{Re}(\lambda_{3,4,5})(\xi) &= \sigma_{3,4,5} + O(|\xi|^{-1}),
\end{array}
\right.
\end{equation*}
 with $\sigma_{3,4,5}<0$.
\end{lemma}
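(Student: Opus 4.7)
The plan is to extend the eigenvalue-expansion method used in Lemmas \ref{lemma:eiginfty_Fourier1} and \ref{lemma:eiginfty_Fourier2} to the fifth-degree polynomial \eqref{charpol_Cattaneo}. Inspecting the degrees of the coefficients in $|\xi|$, one sees that for large $|\xi|$ the coefficients of $\lambda^{3}$, $\lambda^{2}$, $\lambda$ and the constant term are all of leading order $|\xi|^{4}$, while those of $\lambda^{5}$ and $\lambda^{4}$ are $O(1)$. Dividing \eqref{charpol_Cattaneo} by $|\xi|^{4}$ and letting $|\xi|\to\infty$ with $\lambda$ bounded, the leading-order equation is the cubic
\begin{equation*}
\eta^{2}\lambda^{3}+\frac{(\tau+\tau_{0})\eta^{2}}{\tau\tau_{0}}\lambda^{2}+\frac{\eta^{2}+\beta\kappa^{2}}{\tau\tau_{0}}\lambda+\frac{\kappa^{2}}{\tau\tau_{0}}=0,
\end{equation*}
equivalently $\lambda(\tau\lambda+1)(\tau_{0}\lambda+1)+(\kappa^{2}/\eta^{2})(\beta\lambda+1)=0$. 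Its three roots $\sigma_{3,4,5}$ are the limits of the three bounded eigenvalues, and the implicit function theorem yields $\lambda_{j}(\xi)=\sigma_{j}+O(|\xi|^{-1})$ for $j=3,4,5$. To verify $\func{Re}(\sigma_{3,4,5})<0$ under the standing hypothesis $0<\tau<\beta$, I apply the Routh--Hurwitz criterion to the normalised cubic $\tau\tau_{0}\lambda^{3}+(\tau+\tau_{0})\lambda^{2}+(1+\beta\kappa^{2}/\eta^{2})\lambda+\kappa^{2}/\eta^{2}=0$: all coefficients are positive, and the Hurwitz inequality $(\tau+\tau_{0})(\eta^{2}+\beta\kappa^{2})>\tau\tau_{0}\kappa^{2}$ reduces to $\beta(\tau+\tau_{0})-\tau\tau_{0}=\beta\tau+(\beta-\tau)\tau_{0}>0$, which holds.

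For the remaining two eigenvalues, balancing $\lambda^{5}$ against the $\eta^{2}|\xi|^{4}\lambda^{3}$ part of \eqref{charpol_Cattaneo} yields $\lambda^{2}\sim-\eta^{2}|\xi|^{4}$, hence $\lambda_{1,2}\sim\pm i\eta|\xi|^{2}$. I would then plug the ansatz
\begin{equation*}
\lambda(\xi)=i\eta|\xi|^{2}+R_{0}+\frac{R_{1}}{|\xi|^{2}}+O(|\xi|^{-3}),\qquad R_{0},R_{1}\in\mathbb{C},
\end{equation*}
into \eqref{charpol_Cattaneo} and match coefficients. At $O(|\xi|^{8})$ the identity collapses to $2\eta^{4}R_{0}-i\eta^{3}(\tau\kappa^{2}+\beta\tau_{0})/(\tau\tau_{0})=0$, so $R_{0}=i(\tau\kappa^{2}+\beta\tau_{0})/(2\eta\tau\tau_{0})\in i\mathbb{R}$ contributes nothing to $\func{Re}(\lambda)$. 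At $O(|\xi|^{6})$, inserting this value of $R_{0}$ and separating real from imaginary parts, the real component of the matching identity produces
\begin{equation*}
\func{Re}(R_{1})=-\frac{(\beta-\tau)\tau_{0}^{2}+\kappa^{2}\tau^{2}}{2\tau^{2}\eta^{2}\tau_{0}^{2}},
\end{equation*}
which is the leading coefficient stated in the lemma. The conjugate branch $-i\eta|\xi|^{2}$ is handled identically by complex conjugation and produces the same real part.

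The main technical obstacle is the order-by-order tracking for $\lambda_{1,2}$: at order $|\xi|^{8}$ the real contributions arising from the $\lambda^{4}$ term and from the leading part of the $\lambda^{2}$-coefficient cancel identically (both equal $\pm(1/\tau+1/\tau_{0})\eta^{4}|\xi|^{8}$), so the real part of $\lambda_{1,2}$ first emerges at order $|\xi|^{6}$, and recovering it requires expanding each term in \eqref{charpol_Cattaneo} consistently through the $R_{0}^{2}$ and $R_{0}R_{1}$ cross-terms. A cleaner alternative is the rescaling $\mu=\nu^{2}\lambda=-\lambda/|\xi|^{2}$ used in Lemma \ref{lemma:eiginfty_Fourier1}: the three \emph{small} roots $\mu\to 0$ reproduce the cubic above for $\lambda_{3,4,5}$, while the two \emph{large} roots $\mu\to\pm i\eta$ correspond to $\lambda_{1,2}$, and standard regular-perturbation theory around these simple roots of the rescaled polynomial yields the same expansion without the bookkeeping issues of the direct approach.
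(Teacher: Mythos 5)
Your proposal is correct and arrives at the same expansions as the paper, but the two arguments differ in a couple of places worth noting. For the unbounded pair $\lambda_{1,2}$ you work directly with the ansatz $\lambda=i\eta|\xi|^2+R_0+R_1|\xi|^{-2}+\cdots$, whereas the paper performs the rescaling $\mu=\nu^2\lambda$ with $\nu=(i|\xi|)^{-1}$ and expands the roots $\mu_j(\nu)$ of the rescaled polynomial around $\nu=0$; since you point out this rescaling yourself as the cleaner alternative, that difference is essentially cosmetic, and your intermediate values ($R_0$ purely imaginary, the cancellation of the $(1/\tau+1/\tau_0)\eta^4|\xi|^8$ contributions, and $\func{Re}(R_1)$) all match the paper's coefficients. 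The genuine divergence is in proving $\func{Re}(\sigma_{3,4,5})<0$: you apply the Routh--Hurwitz criterion to the limiting cubic $\tau\tau_0\eta^2\sigma^3+(\tau+\tau_0)\eta^2\sigma^2+(\eta^2+\beta\kappa^2)\sigma+\kappa^2=0$, while the paper locates a real root $\sigma_3\in(-\tfrac{1}{\tau}-\tfrac{1}{\tau_0},0)$ by a sign change, factors out $(X-\sigma_3)$, and controls the remaining pair through Vieta's formulas. Your route is shorter and more systematic; the paper's gives the extra (unused) information of where the real root sits. One small imprecision: the Hurwitz inequality $(\tau+\tau_0)(\eta^2+\beta\kappa^2)>\tau\tau_0\kappa^2$ does not \emph{reduce to} $\beta(\tau+\tau_0)-\tau\tau_0>0$; the latter is merely sufficient after discarding the positive term $(\tau+\tau_0)\eta^2$ --- but the conclusion stands, and indeed under $0<\tau<\beta$ both your argument and the paper's use that hypothesis at exactly one point. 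Finally, like the paper, you implicitly assume the roots $\sigma_{3,4,5}$ are simple when invoking the implicit function theorem to get the $O(|\xi|^{-1})$ correction; this is a shared, harmless omission since the leading terms are unaffected.
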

\begin{proof}
 Again, taking $\nu=\zeta^{-1}=(i|\xi|)^{-1}$ in \eqref{charpol_Cattaneo}, the characteristic equation is now

\begin{eqnarray}\label{charpol_Cattaneo_infty}
&& \mu^5 +\Big(\frac{1}{\tau_0}+\frac{1}{\tau}\Big)\nu^2\mu^4 + \frac{\nu^4-(\tau \kappa^2 +\beta\tau_0)\nu^2+\tau\eta^2\tau_0}{\tau\tau_0}\mu^3 \\
&& + \frac{(\tau+\tau_0)\eta^2-(\beta+\tau_0+\kappa^2)\nu^2}{\tau\tau_0}\nu^2\mu^2 +\frac{(\eta^2+\beta\kappa^2)-\nu^2}{\tau\tau_0}\nu^4\mu +\frac{\kappa^2\nu^6}{\tau\tau_0}=0.
\nonumber
\end{eqnarray}
Again, observe that if $\lambda(\xi)$ is a solution of \eqref{charpol_Cattaneo}, then $$\mu(\nu)=\nu^2\lambda=\zeta^{-2}\lambda=-|\xi|^{-2}\lambda$$ is a solution of \eqref{charpol_Cattaneo_infty}. We can now compute the asymptotic expansion
$$\mu_j(\nu) =\mu_j^0+\mu_j^1\nu+\mu_j^2\nu^2+\mu_j^3\nu^3+\mu_j^4\nu^4+\ldots,\quad  j=1,\ldots,4$$of the roots of the characteristic polynomial \eqref{charpol_Cattaneo_infty} when $\nu\to 0$ (that is, when $|\xi|\to\infty$):

\begin{eqnarray*}
  \mu_{1,2}(\nu) &=& \pm \eta i +\frac{\tau \kappa^2+\beta\tau_0}{2\tau\eta\tau_0} i \nu^2 +\left( \frac{(\beta-\tau)\tau_0^2+\kappa^2\tau^2}{2\tau^2\eta^2\tau_0^2} \mp \frac{\beta^2\tau_0^2+6\beta\kappa^2\tau\tau_0+\kappa^4\tau^2}{8\tau^2\eta^3\tau_0^2} i\right)\nu^4 + O(\nu^5) \\
  \mu_{3,4,5}(\nu) &=&  \sigma_{3,4,5} \nu^2+O(\nu^3).
\end{eqnarray*}
(if $\eta,\tau,\tau_0\neq 0$) where $\sigma_{3,4,5}$ represent the three roots of the following third order equation in $\sigma$
\begin{equation}\label{sigma2}
\tau\eta^2\tau_0 \sigma^3 +\eta^2(\tau_0+\tau) \sigma^2+(\eta^2+\beta\kappa^2) \sigma+\kappa^2=0.
\end{equation}
Considering again the relation between $\mu$ and $\lambda$ and that $\nu=\zeta^{-1}=-i|\xi|^{-1}$, we recall that
$$\lambda_j(\xi) =-\mu_j^0|\xi|^2+\mu_j^1 i|\xi|+\mu_j^2-\mu_j^3 i|\xi|^{-1}-\mu_j^4 |\xi|^{-2}+O(|\xi|^{-3}),\quad j=1,\ldots,4.$$
So, we have that, when $|\xi|\to \infty$:
\begin{eqnarray*}
  \func{Re}(\lambda_{1,2})(\xi) &=& -\frac{(\beta-\tau)\tau_0^2+\kappa^2\tau^2}{2\tau^2\eta^2\tau_0^2} |\xi|^{-2} +O(|\xi|^{-3}) \\
\end{eqnarray*}
which is negative if $0<\tau<\beta$, but:
\begin{eqnarray*}
  \func{Re}(\lambda_{3,4,5})(\xi) &=& \sigma_{3,4,5} + O(|\xi|^{-1}).
\end{eqnarray*}
For the roots of \eqref{sigma2}, we consider
\begin{equation}\label{Equation_Third_Order}
f(X)=\tau \eta ^2 \tau_0  X^3+\eta ^2 (\tau_0+\tau )X^2 +\left(\eta ^2+\beta\right)X +1.
\end{equation}
It is well known that an algebraic equation of an odd degree with real coefficients has at least
one real root $\sigma_3$.  Now, in order to know the location of $\sigma_3$, we consider the equation  \eqref{Equation_Third_Order} with $X\in \R$.
Thus, we have
\begin{equation*}
f\left(-\frac{1}{\tau}-\frac{1}{\tau_0}\right)=
-\frac{ \left(\beta-\tau\right)\kappa^2\tau_0+\eta^2 (\tau+\tau_0 )+ \beta\kappa^2\tau }{\tau \tau_0 }<0,
\end{equation*}
since $0<\tau<\beta$.
Consequently, we obtain
\begin{equation*}
f\left(-\frac{1}{\tau}-\frac{1}{\tau_0}\right)f(0)<0.
\end{equation*}
Therefore,  equation \eqref{Equation_Third_Order} has at least one real root $X=\sigma_3$ in the interval $(-\frac{1}{\tau}-\frac{1}{\tau_0},0)$. In this case, we may write  \eqref{Equation_Third_Order} in the form
\begin{equation*}\label{Equation_Factor}
f(X)=(X-\sigma_3)\left(\tau \eta ^2 \tau_0 X^2+d_1 X+d_0\right)
\end{equation*}
with
\begin{equation*}
d_1=\tau \eta ^2 \tau_0\sigma_3+\eta ^2 (\tau+\tau_0 )\qquad \text{and}\qquad d_0= \beta\kappa^2 +\eta ^2 + d_1\sigma_3 = \beta\kappa^2 +\eta ^2 + \tau \eta ^2 \tau_0\sigma_3^2+\eta ^2 (\tau+\tau_0 )\sigma_3
\end{equation*}
or, also, $d_0=-\frac{1}{\sigma_3}$.
Now, let us denote by $\sigma_4$ and $\sigma_5$, the other two roots. Then, we have
\begin{equation*}
\sigma_4+\sigma_5=-\frac{d_1}{\tau \eta ^2 \tau_0}\qquad \textrm{and}\qquad \sigma_4\sigma_5=\frac{d_0}{\tau \eta ^2 \tau_0}.
\end{equation*}
Since  $\sigma_3\in(-\frac{1}{\tau}-\frac{1}{\tau_0},0)$ it is clear that $d_1>0$ and $d_0>0$.

Then if $\sigma_3$ and $\sigma_4$ are real, they are both negative (since their sum is negative and their product is positive).

If $\sigma_3$ and $\sigma_4$ are complex, then  they are conjugate and  therefore $$\func{Re}(\sigma_4)=\func{Re}(\sigma_5).$$
This implies that

\begin{eqnarray*}
\func{Re}(\sigma_4)&=&\func{Re}(\sigma_5)
= -\frac{1}{2} \left(\sigma_3+\frac{1}{\tau_0}+\frac{1}{\tau}\right)<0,
\end{eqnarray*}
since $\sigma_3\in (-\frac{1}{\tau}-\frac{1}{\tau_0},0).$
\end{proof}

\subsection{Stability results: the case $\beta=\tau$}\label{stabCattaneo_2}
\subsubsection{Lyapunov functional}
In this section as we did for the Fourier model, we show that the presence of the heat conduction allows us to push the above result to the case $\beta=\tau$. However, as in the Fourier model a slower decay rate is obtained. The main result here is given in the following theorem.

\begin{theorem}\label{thm:CattaneoDecay_tau=beta}
Assume that $\tau=\beta$ (with $\tau,\beta>0$).  Let $\WC(x,t)=(\tau u_{tt}+u_t,\nabla (\tau u_t+u),\theta,q)^T(x,t)$, where $(u,\theta,q)$ is the solution of (\ref{Bose_Problem}) when $\tau_0>0$. Let $s$ be a nonnegative integer and let $\WC^0=\WC(x,0)\in
H^{s}\left(
\mathbb{R}^N
\right) \cap L^{1}\left(
\mathbb{R}^N
\right) .$  Then, the following decay estimate
\begin{equation}\label{Main_estimate_Theorem_2}
\left\Vert \nabla^{k}\WC\left( t\right) \right\Vert _{L^{2}(\mathbb{R}^N)}\leq  C(1+t)^{-N/8-k/4}\left\Vert \WC^0 \right\Vert _{L^{1}(\mathbb{R}^N)}+ C(1+t)^{-\ell/3}\left\Vert \nabla^{k+\ell}\WC^0 \right\Vert _{L^{2}(\mathbb{R}^N)}
\end{equation}
holds,
for any $t\geq 0$ and $0\leq k+\ell\leq s$, where $C$ and $c$ are two positive constants independent of $t$ and $\WC^0$.
\end{theorem}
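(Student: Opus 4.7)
The proof runs parallel to those of Theorems \ref{Decay_Second_system_Cattaneo} and \ref{Decay_Fourier_2}, combining the new ingredients that enter the $\tau=\beta$ regime with the Cattaneo machinery from Section \ref{stabCattaneo_1}. First I would pass to the Fourier picture \eqref{System_New_2_Cattaneo} and define the energy
$$\hat{\EcC}(\xi,t)=\frac{1}{2}\{|\hat v+\tau\hat w|^2+|\xi|^2|\hat u+\tau\hat v|^2+|\hat\theta|^2+\tau_0|\hat q|^2\},$$
for which the computation of Lemma \ref{dissipa_Energy_Cattaneo} specialises at $\beta=\tau$ to
$$\frac{d}{dt}\hat{\EcC}(\xi,t)=-|\hat q|^2,$$
so that the $|\xi|^2|\hat v|^2$ dissipation of Section \ref{stabCattaneo_1} is lost.

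To recover dissipation for the remaining components I would layer three auxiliary functionals. The functional $\F_1$ of \eqref{F_1_Functional} continues to obey \eqref{dF_1_dt_tau_2} at $\tau=\beta$ and produces a dissipation for $|\xi|^2|\hat u+\tau\hat v|^2$; a functional $\F_3=\func{Re}(\bar{\hat\theta}(\hat v+\tau\hat w))$ as in \eqref{F_3_Equation}, now adapted to the Cattaneo side so that the extra cross term $\kappa\,i\xi\cdot\hat q$ appearing in \eqref{Fourth_equation_system_Fourier_Cattaneo} is absorbed by Young's inequality, delivers a dissipation for $|\xi|^2|\hat v+\tau\hat w|^2$; finally, the functional $\mathcal{F}_4$ of Section \ref{stabCattaneo_1} supplies the $|\xi|^2|\hat\theta|^2$ dissipation at the cost of $(1+|\xi|^2)|\hat q|^2$, exactly as in \eqref{dF_4_dt_main}. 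Assembling these into
$$\cLC(\xi,t):=\tilde{\gamma}_0\,P(|\xi|)\,\hat{\EcC}(\xi,t)+|\xi|^4\F_1(\xi,t)+\tilde{\gamma}_1|\xi|^2\F_3(\xi,t)+\tilde{\gamma}_2\mathcal{F}_4(\xi,t),$$
where $P(|\xi|)\geq 1$ is a positive polynomial of suitable degree and the $\tilde{\gamma}_j$ are constants to be fixed, and calibrating the smallness parameters of the successive Young's inequalities greedily as in the previous sections, one arrives at a differential inequality
$$\frac{d}{dt}\cLC(\xi,t)+c\,|\xi|^4\,\hat{\EcC}(\xi,t)\leq 0,$$
together with the equivalence $\tilde{\gamma}_3P(|\xi|)\hat{\EcC}\leq\cLC\leq\tilde{\gamma}_4P(|\xi|)\hat{\EcC}$. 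Gronwall's lemma then delivers the pointwise Fourier bound $|\hat{\WC}(\xi,t)|^2\leq Ce^{-c\vrC(\xi)t}|\hat{\WC}(\xi,0)|^2$ for $\vrC(\xi)=|\xi|^4/P(|\xi|)$, with $\vrC(\xi)\gtrsim|\xi|^4$ for $|\xi|\leq 1$ and $\vrC(\xi)\gtrsim|\xi|^{-3}$ for $|\xi|\geq 1$.

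Plancherel's identity reduces \eqref{Main_estimate_Theorem_2} to the estimation of $\int_{\R^N}|\xi|^{2k}e^{-c\vrC(\xi)t}|\hat{\WC}(\xi,0)|^2\,d\xi$. Splitting at $|\xi|=1$ as in the proofs of Theorems \ref{Decay_Second_system} and \ref{Decay_Second_system_Cattaneo}, and using $\int_{|\xi|\leq 1}|\xi|^{2k}e^{-c|\xi|^4t}\,d\xi\leq C(1+t)^{-N/4-k/2}$ (a trivial rescaling of the lemma of \cite{PellSaid_2016}) together with $\sup_{|\xi|\geq 1}|\xi|^{-2\ell}e^{-c|\xi|^{-3}t}\leq C(1+t)^{-2\ell/3}$, one recovers the two summands $(1+t)^{-N/8-k/4}\|\WC^0\|_{L^1}$ and $(1+t)^{-\ell/3}\|\nabla^{k+\ell}\WC^0\|_{L^2}$ after taking square roots. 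The most delicate point will be engineering $P(|\xi|)$ so that, at high frequencies, the Cattaneo cross term $\tau_0\langle i\xi\hat\theta,\hat q\rangle$ entering $\mathcal{F}_4$ — whose natural scale is $|\xi|$ rather than $|\xi|^2$ — yields precisely the $|\xi|^{-3}$ decay of $\vrC$; moreover, the loss of the $|\xi|^2|\hat v|^2$ dissipation (available only for $\tau<\beta$) forces every remainder $|\hat v|^2$ appearing in the auxiliary estimates to be absorbed through a careful interplay between $\F_3$ and $\mathcal{F}_4$, which is the analytic heart of the construction.
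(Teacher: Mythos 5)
Your overall architecture --- the energy identity $\frac{d}{dt}\hat{\EcC}(\xi,t)=-|\hat q|^2$, a weighted Lyapunov functional built from $\F_1$ plus two further cross functionals, Gronwall, Plancherel, and the low/high frequency splitting --- is the same as the paper's, and your low-frequency computation ($\vrC\gtrsim|\xi|^4$ for $|\xi|\leq 1$, giving $(1+t)^{-N/8-k/4}$) matches. The gap is at the quantitative core: you never construct $P(|\xi|)$, and the high-frequency bound $\vrC(\xi)\gtrsim|\xi|^{-3}$ that you assert is reverse-engineered from the exponent $-\ell/3$ in the statement rather than derived. The paper's actual construction (Proposition \ref{Main_Lemma_Cattaneo_2}) yields $\vrC(\xi)=|\xi|^4/\bigl[(1+|\xi|^2)^2(1+|\xi|^2+|\xi|^4+|\xi|^6)\bigr]\sim|\xi|^{-6}$ as $|\xi|\to\infty$; the $|\xi|^6$ in the denominator is forced by the term $C(\tilde{\varepsilon}_3,\tilde{\varepsilon}_4)(1+|\xi|^2+|\xi|^4+|\xi|^6)|\hat q|^2$ in \eqref{dF_4_dt_tilde_Ineq}, which comes from weighing the Cattaneo cross term $\eta\tau_0|\xi|^2\func{Re}\langle i\xi(\hat v+\tau\hat w),\hat q\rangle$ against the only available $\frac{|\xi|^2}{1+|\xi|^2}|\hat v+\tau\hat w|^2$ dissipation. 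With $|\xi|^{-6}$ the standard high-frequency argument gives $L_2\leq C(1+t)^{-\ell/3}\Vert\nabla^{k+\ell}\WC^0\Vert_{L^2}^2$, i.e.\ $(1+t)^{-\ell/6}$ after taking square roots; so your $|\xi|^{-3}$ is exactly what would be needed to literally reach the displayed $(1+t)^{-\ell/3}$, but no argument is offered for it, and the paper itself warns in Remark \ref{rmk:decayCatt_tau=beta} that even its own high-frequency exponent is likely non-optimal (the eigenvalues suggest $|\xi|^{-2}$). Asserting a ``polynomial of suitable degree'' and reading off the answer is circular.

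The second missing idea is structural. You propose to use $\F_3=\func{Re}(\bar{\hat\theta}(\hat v+\tau\hat w))$ and the $\mathcal{F}_4$ of Section \ref{stabCattaneo_1}, absorbing the new cross terms ``by Young's inequality''. This does not close as stated: $\frac{d}{dt}\F_3$ produces $-|\xi|^2\func{Re}(\bar{\hat\theta}(\hat u+\tau\hat v))$, and the $\hat\theta$-dissipation identity produces $-\eta\tau_0|\xi|^2\func{Re}\langle i\xi(\hat v+\tau\hat w),\hat q\rangle$; estimating these by Young sends large multiples of $|\hat\theta|^2$ and $|\hat v+\tau\hat w|^2$ back into the balance with constants that must themselves be dominated by the very dissipations being created --- a circular dependence, made worse here because when $\tau=\beta$ the $|\xi|^2|\hat v|^2$ dissipation is gone and $|\hat q|^2$ is the only directly dissipated quantity. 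The paper breaks the circle by correcting both functionals with $\func{Re}\langle i\xi(\hat u+\tau\hat v),\hat q\rangle$-type terms (the functionals $\tilde{\F}_3$ and $\tilde{\F}_4$ built via \eqref{F_q_4}), which cancel the dangerous cross terms exactly and route the remainders onto $|\hat q|^2$; the resulting rational weights $\frac{|\xi|^4}{(1+|\xi|^2)^2}$, $\frac{1}{(1+|\xi|^2)^2}$, $\frac{|\xi|^2}{1+|\xi|^2}$ in \eqref{Lyap_Cattanoe_2} are precisely what determine $\vrC$. Any complete proof along your lines must supply this device (or an equivalent) before the calibration of constants you describe can be carried out.
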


The proof of this theorem follows from the Proposition \ref{Main_Lemma_Cattaneo_2} below and the same techniques as in the proofs of Theorems \ref{Decay_Fourier_2} and \ref{Decay_Second_system_Cattaneo}. Hence, we omit its details here.

\begin{remark}
  Notice that the previous result shows the same exponent for the decay rate as the Fourier case when $\tau=\beta$ (see Theorem \ref{Decay_Fourier_2}) and also exhibits a regularity-loss phenomenon as in the Cattaneo case with $\tau<\beta$, although the loss of regularity is not the same, (see Theorem \ref{Decay_Second_system_Cattaneo}). We recall that this regularity loss part comes from the asymptotic behaviour of the eigenvalues when $|\xi|\to \infty$. And, as we will see in Remark \ref{rmk:decayCatt_tau=beta} in next section, this asymptotic behaviour is not the same as the one obtained for
  $\vrC(\xi)$ in Proposition \ref{Main_Lemma_Cattaneo_2} below (which, actually, is the exponent used to prove Theorem \ref{thm:CattaneoDecay_tau=beta}). Hence, unlike the previous sections, the exponent obtained in this proposition (and, hence, the regularity loss phenomenon in Theorem \ref{thm:CattaneoDecay_tau=beta}) is true, but may not optimal.
\end{remark}
As we said, the main result to prove the previous decay result is the following proposition.
\begin{proposition}\label{Main_Lemma_Cattaneo_2}
Assume that $\tau=\beta$. Let $\WC(x,t)=(\tau u_{tt}+u_t,\nabla (\tau u_t+u),\theta,q)^T(x,t)$, where $(u(x,t),\theta(x,t),q(x,t))$ is the solution of \eqref{Bose_Problem} for $\tau_0>0$. Then we have
\begin{equation*}  \label{V_Inequality_2}
|\hat{\WC}(\xi,t)|^2\leq Ce^{-c\vrC(\xi)t}|\hat{\WC}(\xi,0)|^2,
\end{equation*}
where
\begin{equation*}
\vrC(\xi)=\frac{|\xi|^4}{(1+|\xi|^2)^2(1+|\xi|^2+|\xi|^4+|\xi|^6)},
\end{equation*}
and $C$ and $c$ are two positive constants.
\end{proposition}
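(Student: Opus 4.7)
The plan is to combine the two Lyapunov constructions already carried out in the paper: the Fourier one with $\tau=\beta$ (Proposition \ref{Prop_beta_tau_2}) and the Cattaneo one with $\tau<\beta$ (Proposition \ref{Main_Lemma_Cattaneo}). Setting $\tau=\beta$ in \eqref{Energy_functional_Cattaneo} makes the $(\beta-\tau)|\xi|^2|\hat v|^2$ dissipation disappear, so the adapted energy
$$\hat{\EcC}(\xi,t) := \tfrac{1}{2}\bigl\{|\hat v+\tau\hat w|^2 + |\xi|^2|\hat u+\tau\hat v|^2 + |\hat\theta|^2 + \tau_0|\hat q|^2\bigr\}$$
only satisfies $\frac{d}{dt}\hat{\EcC}(\xi,t) = -|\hat q|^2$. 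The only first--level dissipation now comes from the Cattaneo flux equation, and I must recover controls on $|\xi|^2|\hat\theta|^2$, $|\xi|^2|\hat v+\tau\hat w|^2$ and $|\xi|^4|\hat u+\tau\hat v|^2$ through auxiliary multipliers.

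First I would reuse the multiplier $\F_1 = \func{Re}((\bar{\hat u}+\tau\bar{\hat v})(\hat v+\tau\hat w))$ from Lemma \ref{Lemma_F_1}. Since under $\tau=\beta$ the $(\tau-\beta)$--term on the right of \eqref{F_1_main} vanishes identically, the estimate \eqref{dF_1_dt_tau_2} remains valid and gives control of $|\xi|^2|\hat u+\tau\hat v|^2$. Second, I would exploit the fact that summing \eqref{Second_equation_system_Fourier_Cattaneo} and \eqref{Third_equation_system_Fourier_Cattaneo} yields, under $\tau=\beta$, the pair
$$(\hat v+\tau\hat w)_t + |\xi|^2(\hat u+\tau\hat v) - \eta|\xi|^2\hat\theta=0,\qquad \hat\theta_t + \kappa\, i\xi\cdot\hat q + \eta|\xi|^2(\hat v+\tau\hat w)=0,$$
and define $\F_3 := \func{Re}(\bar{\hat\theta}(\hat v+\tau\hat w))$ as in \eqref{F_3_Equation}. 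Cross--multiplying and using Young's inequality on the new Cattaneo contribution $-\kappa\func{Re}\langle i\xi\cdot\hat q,\overline{\hat v+\tau\hat w}\rangle$ (absorbed into a small multiple of $|\xi|^2|\hat v+\tau\hat w|^2$ plus $|\hat q|^2$) gives
$$\frac{d}{dt}\F_3 + (\eta-\tilde\epsilon_1)|\xi|^2|\hat v+\tau\hat w|^2 \le \tilde\epsilon_2|\xi|^4|\hat u+\tau\hat v|^2 + C(1+|\xi|^2)|\hat\theta|^2 + C|\hat q|^2.$$
Third, I would build exactly the Cattaneo combination $\mathcal{F}_4 = \kappa|\xi|^2\F_1 + \mathcal{F}_3$ of \eqref{dF_4_dt_1}, where $\mathcal{F}_3$ is the flux--temperature pairing from the proof of Proposition \ref{Main_Lemma_Cattaneo}. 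The crucial observation is that the remainder term $\kappa|\xi|^4(\tau-\beta)\func{Re}(\hat v(\bar{\hat u}+\tau\bar{\hat v}))$ in \eqref{dF_4_dt_1} vanishes identically when $\tau=\beta$, so no $|\xi|^4|\hat v|^2$ penalty appears, which is essential because $\hat v$ is no longer dissipated by the energy. This produces
$$\frac{d}{dt}\mathcal{F}_4 + (\kappa-\varepsilon_3)|\xi|^2|\hat\theta|^2 + (\kappa-\varepsilon_4)|\xi|^4|\hat u+\tau\hat v|^2 \le \kappa|\xi|^2|\hat v+\tau\hat w|^2 + C(1+|\xi|^2)|\hat q|^2.$$

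Finally I would assemble the Lyapunov functional
$$\cLC(\xi,t) := \Gamma_0(1+|\xi|^2)^2(1+|\xi|^2+|\xi|^4+|\xi|^6)\hat{\EcC}(\xi,t) + \Gamma_1|\xi|^4\F_1 + \Gamma_2|\xi|^2\F_3 + \Gamma_3(1+|\xi|^2)\mathcal{F}_4,$$
and fix the constants in the usual cascade (first $\varepsilon_j$, $\tilde\epsilon_j$ small to render the $|\xi|^2|\hat v+\tau\hat w|^2$--, $|\xi|^2|\hat\theta|^2$-- and $|\xi|^4|\hat u+\tau\hat v|^2$--coefficients strictly positive; then $\Gamma_1,\Gamma_2,\Gamma_3$ in the appropriate order; finally $\Gamma_0$ large enough to absorb all the lower--order $|\hat q|^2$ and $|\hat\theta|^2$ remainders into the energy dissipation $-|\hat q|^2$ and into the $|\xi|^2|\hat\theta|^2$ term provided by $\mathcal{F}_4$). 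Standard Young and Cauchy--Schwarz bookkeeping, together with the fact that the weight $(1+|\xi|^2)^2(1+|\xi|^2+|\xi|^4+|\xi|^6)$ dominates each of $|\xi|^4$, $|\xi|^2$ and $(1+|\xi|^2)$, will yield the equivalence
$$c_1(1+|\xi|^2)^2(1+|\xi|^2+|\xi|^4+|\xi|^6)\hat{\EcC} \le \cLC \le c_2(1+|\xi|^2)^2(1+|\xi|^2+|\xi|^4+|\xi|^6)\hat{\EcC}$$
and the differential inequality $\frac{d}{dt}\cLC + c|\xi|^4\hat{\EcC}\le 0$, which together give $\frac{d}{dt}\cLC + c\,\vrC(\xi)\,\cLC \le 0$. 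Gronwall's inequality, combined with the equivalence between $\cLC$ (up to the polynomial factor) and $|\hat{\WC}(\xi,t)|^2$, will then deliver the pointwise estimate of the proposition.

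The main obstacle will be the simultaneous calibration of the $\Gamma_i$ and of the polynomial multipliers in front of $\F_1$, $\F_3$, $\mathcal{F}_4$: one needs the cancellations to produce dissipation of the full energy with rate $|\xi|^4$, while keeping $\cLC$ comparable to $(1+|\xi|^2)^2(1+|\xi|^2+|\xi|^4+|\xi|^6)\hat{\EcC}$ uniformly in $\xi\in\R^N$, rather than merely in one frequency regime. The extra factor $(1+|\xi|^2)^2$ relative to either Proposition \ref{Prop_beta_tau_2} or Proposition \ref{Main_Lemma_Cattaneo} is precisely the footprint of the two difficulties coming together: the loss of viscoelastic $\hat v$--dissipation when $\tau=\beta$ and the regularity--loss characteristic of the Cattaneo coupling.
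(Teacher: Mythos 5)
Your skeleton (the degenerate energy with $\frac{d}{dt}\hat{\EcC}=-|\hat q|^2$, the reuse of $\F_1$ via \eqref{dF_1_dt_tau_2}, a $\theta$--$(\hat v+\tau\hat w)$ pairing $\F_3$ to recover dissipation of $|\hat v+\tau\hat w|^2$, and the flux--temperature pairing to recover $|\xi|^2|\hat\theta|^2$) is exactly the paper's strategy, and your observation that the $(\tau-\beta)$ remainder in \eqref{dF_4_dt_1} vanishes is correct. However, there is a genuine gap at the absorption step, and it is not a matter of calibrating the $\Gamma_i$: no choice of constants closes your cascade. The estimate \eqref{dF_4_dt_main} for $\mathcal{F}_4=\kappa|\xi|^2\F_1+\mathcal{F}_3$ carries the penalty $+\kappa|\xi|^2|\hat v+\tau\hat w|^2$ with the \emph{fixed} constant $\kappa$ (it comes from the exact identity $-|\hat v+\tau\hat w|^2$ in \eqref{F_1_main}, not from a Young splitting, so it cannot be made small). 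With your weights, the net coefficient of $|\hat v+\tau\hat w|^2$ in $\frac{d}{dt}\cLC$ is
\begin{equation*}
-\bigl[\Gamma_2(\eta-\tilde\epsilon_1)-\Gamma_1\bigr]|\xi|^4+\Gamma_3\kappa(1+|\xi|^2)|\xi|^2 ,
\end{equation*}
which is positive as $|\xi|\to 0$ for every $\Gamma_3>0$: the only available dissipation of $|\hat v+\tau\hat w|^2$ enters through the $\eta|\xi|^2$ coupling in the second equation of \eqref{Cattaneo_New_Form} and is therefore one power of $|\xi|^2$ weaker, at low frequency, than the penalty produced by $\mathcal{F}_4$. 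Re-weighting $\mathcal{F}_4$ does not help, because lowering its weight destroys the absorption of the zero-order $C(\tilde\epsilon_2)|\hat\theta|^2$ remainder that your Young estimate for $\F_3$ produces from the term $-|\xi|^2\func{Re}(\bar{\hat\theta}(\hat u+\tau\hat v))$.

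This is precisely why the paper does not reuse $\F_3$ and $\mathcal{F}_4$ verbatim but replaces them by corrected functionals $\tilde{\F}_3$ and $\tilde{\F}_4$ built with the extra multiplier $\func{Re}\langle i\xi(\hat u+\tau\hat v),\hat q\rangle$ and the identity \eqref{F_q_4}. The correction in $\tilde{\F}_3$ cancels the term $-|\xi|^2\func{Re}(\bar{\hat\theta}(\hat u+\tau\hat v))$ \emph{exactly}, so its $|\hat\theta|^2$ remainder is $\eta|\xi|^4|\hat\theta|^2$ rather than $C(1+|\xi|^2)|\hat\theta|^2$; the correction in $\tilde{\F}_4$ makes the $|\hat v+\tau\hat w|^2$ remainder appear only through the cross term $-\eta\tau_0|\xi|^2\func{Re}\langle i\xi(\hat v+\tau\hat w),\hat q\rangle$, hence with an arbitrarily small Young constant $\tilde\varepsilon_4$ chosen \emph{after} $N_2,N_3$ are fixed (see \eqref{dF_4_dt_tilde_Ineq} and the order of choices following \eqref{d_L_dt_Cattaneo_2}). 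These two modifications, together with the weights $\frac{|\xi|^4}{(1+|\xi|^2)^2}$, $\frac{1}{(1+|\xi|^2)^2}$ and $\frac{|\xi|^2}{1+|\xi|^2}$ that align all terms at the order $\frac{|\xi|^4}{(1+|\xi|^2)^2}$, are the missing ingredients; without them the low-frequency part of your differential inequality cannot be closed.
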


\begin{proof}
Now, for $\beta=\tau$, the energy functional \eqref{Energy_functional_Cattaneo} takes the form
\begin{equation*}\label{Energy_functional_Cattaneo_2}
\hat{\EcC}(\xi,t):=\frac{1}{2}\left\{ | \hat{v}+\tau\hat{w}|^2+|\xi|^2|\hat{u}+\tau\hat{v}|^2+|\hat{\theta}|^2+\tau_0|\hat{q}|^2\right\}
\end{equation*}
and satisfies for all $t\geq 0$,
\begin{equation}  \label{dE_dt_1_Cattaneo_2}
\frac{d}{dt}\hat{\EcC}(\xi, t)=-|\hat{q}|^2.
\end{equation}
The estimate \eqref{dF_1_dt_tau_2} also holds for the Cattaneo model.

Now, to get a dissipative term for $|\hat{v}+\tau\hat{w}|^2$, we rewrite system \eqref{System_New_2_Cattaneo} as
\begin{eqnarray}\label{Cattaneo_New_Form}
\left\{
\begin{array}{ll}
(\hat{v}+\tau\hat{w})_t+|\xi|^2 (\hat{u}+\tau  \hat{v})-\eta |\xi|^2 \hat{\theta} =0,\vspace{0.2cm}\\
\hat{\theta} _{t}+\kappa i\xi\cdot \hat{q}+\eta|\xi|^2  (\hat{v}+\tau\hat{w})=0,
\vspace{0.2cm}\\
 \tau_0\hat{q}_t+\hat{q}+i\xi \kappa \hat{\theta}=0.
\end{array}
\right.
\end{eqnarray}
Hence, we define the functional $\F_3(\xi,t)$ as in \eqref{F_3_Equation} and obtain, by taking its time derivative,
\begin{eqnarray}\label{dF_3_dt_Catt_3}
&&\frac{d}{dt}\F_3(\xi,t)+\eta|\xi|^2|\hat{v}+\tau\hat{w}|^2\notag\\
&=&-\kappa\func{Re}\langle i\xi \cdot \hat{q},\hat{v}+\tau \hat{w}%
\rangle +\eta|\xi|^2|\hat{\theta}|^2-|\xi|^2\func{Re}(\bar{\hat{\theta}}(\hat{u}+\tau\hat{v})).
\end{eqnarray}
Next, taking the dot product of the third equation in %
\eqref{Cattaneo_New_Form} with $i\xi \eta (\overline{\hat{u}+\tau \hat{v}})$, we get
\[
\tau _{0}\eta \langle i\xi \cdot \hat{q}_{t},\hat{u}+\tau \hat{v}\rangle
+\eta \langle i\xi \cdot \hat{q},\hat{u}+\tau \hat{v}\rangle -|\xi
|^{2}\kappa \eta \hat{\theta}(\bar{\hat{u}}+\tau \bar{\hat{v}})=0
\]%
adding and subtracting $\eta \tau _{0}\langle (\hat{u}+\tau \hat{v}%
)_{t},i\xi \cdot \hat{q}\rangle =\eta \tau _{0}\langle \hat{v}+\tau \hat{w}%
,i\xi \cdot \hat{q}\rangle $ to the above equation and taking the real part,
we obtain
\begin{eqnarray}
&&\tau _{0}\eta \frac{d}{dt}\func{Re}\langle i\xi (\hat{u}+\tau \hat{v})),%
\hat{q}\rangle   \nonumber  \label{F_q_4} \\
&=&\eta \tau _{0}\func{Re}\langle i\xi (\hat{v}+\tau \hat{w})),\hat{q}%
\rangle +\eta \func{Re}(\langle i\xi \cdot \hat{q},\hat{u}+\tau \hat{v}%
\rangle )-|\xi |^{2}\kappa \eta \func{Re}(\hat{\theta}(\bar{\hat{u}}+\tau
\bar{\hat{v}})).
\end{eqnarray}
We define the functional
\begin{eqnarray*}
\tilde{\F}_3(\xi,t)=|\xi|^2\Big(\F_3(\xi,t)-\frac{\tau_0}{\kappa}\func{Re}\langle i\xi (\hat{u}+\tau \hat{v}),%
\hat{q}\rangle\Big).
\end{eqnarray*}
Hence, from \eqref{dF_3_dt_Catt_3} and \eqref{F_q_4}, we deduce that
\begin{eqnarray*}
&&\frac{d}{dt}\tilde{\F}_{3}(\xi ,t)+\eta |\xi |^{4}|\hat{v}+\tau \hat{w}|^{2}
\nonumber  \label{dF_3__tilde_dt_Catt_3} \\
&=&-\kappa |\xi |^{2}\func{Re}\langle i\xi \cdot \hat{q},\hat{v}+\tau \hat{w}%
\rangle +\eta |\xi |^{4}|\hat{\theta}|^{2}  \nonumber \\
&&-\frac{\tau _{0}}{\kappa }|\xi |^{2}\func{Re}\langle i\xi (\hat{v}+\tau
\hat{w})),\hat{q}\rangle -\frac{1}{\kappa }|\xi |^{2}\func{Re}(\langle i\xi
\cdot \hat{q},\hat{u}+\tau \hat{v}\rangle )\\
&=&  \left(\frac{\tau_0}{\kappa}-\kappa\right) |\xi |^{2}\func{Re}\langle i\xi \cdot \hat{q},\hat{v}+\tau \hat{w}%
\rangle +\eta |\xi |^{4}|\hat{\theta}|^{2} +\frac{1}{\kappa }|\xi |^{2}\func{Re}(\langle
 \hat{q},i\xi(\hat{u}+\tau \hat{v})\rangle ). \nonumber
\end{eqnarray*}
Applying Young's inequality, we obtain
\begin{eqnarray}\label{F_3_Estimate_Cattaneo}
&&\frac{d}{dt}\tilde{\F}_3(\xi,t)+(\eta-\tilde{\varepsilon}_1)|\xi|^4|\hat{v}+\tau\hat{w}|^2\notag\\
&\leq & \tilde{\varepsilon}_2|\xi|^6|\hat{u}+\tau \hat{v}|^2+\eta|\xi|^4|\hat{\theta}|^2+C(\tilde{\varepsilon}_1, \tilde{\varepsilon}_2)(1+|\xi|^2)|\hat{q}|^2.
\end{eqnarray}
Now, define the functional
\begin{eqnarray*}
\tilde{\F}_4(\xi,t)=\mathcal{F}_3(\xi,t)-|\xi|^2\tau _{0}\eta \func{Re}\langle i\xi (\hat{u}+\tau \hat{v})),%
\hat{q}\rangle.
\end{eqnarray*}
Then, we obtain from \eqref{dF_3_dt_3} and \eqref{F_q_4}
\begin{eqnarray*}\label{dF_4_dt_tilde}
\frac{d}{dt}\tilde{\F}_4(\xi,t)+\kappa|\xi|^2  |\hat{\theta}|^2&=&\tau_0\kappa |\xi\cdot \hat{q}|^2+\func{Re}\langle\hat{q}, i\xi %
\hat{\theta}\rangle
 -\eta\tau_0|\xi|^2\func{Re}\langle i\xi(\hat{v}+\tau\hat{w}),\hat{q}\rangle .
\end{eqnarray*}
Applying Young's inequality, we find for any $\tilde{\varepsilon}_3, \tilde{\varepsilon}_4>0$,
\begin{eqnarray}\label{dF_4_dt_tilde_Ineq}
&&\frac{d}{dt}\tilde{\F}_4(\xi,t)+(\kappa-\tilde{\varepsilon}_3)|\xi|^2  |\hat{\theta}|^2\notag\\
&\leq &\tilde{\varepsilon}_4\frac{|\xi|^2}{1+|\xi|^2}|\hat{v}+\tau \hat{w}|^2+C(\tilde{\varepsilon}_3,\tilde{\varepsilon}_4)(1+|\xi|^2+|\xi|^4+|\xi|^6)|\hat{q}|^2.
\end{eqnarray}
Now, we define the Lyapunov functional as
\begin{eqnarray} \label{Lyap_Cattanoe_2}
\cLC(\xi,t)&=& N_0(1+|\xi|^2+|\xi|^4+|\xi|^6)\hat{\EcC}(\xi,t)+N_1 \frac{|\xi|^4}{(1+|\xi|^2)^2}\F_1(\xi,t)\notag\\&&+N_2\frac{1}{(1+|\xi|^2)^2}\tilde{\F}_3(\xi,t)+N_3\frac{|\xi|^2}{1+|\xi|^2}\tilde{\F}_4(\xi,t),
\end{eqnarray}
where $N_i,\, i=0,\dots,3$ are positive constants that should be fixed later on.
Hence, taking the time derivative of \eqref{Lyap_Cattanoe_2} and making use of \eqref{dF_1_dt_tau_2}, \eqref{dE_dt_1_Cattaneo_2},  \eqref{F_3_Estimate_Cattaneo} and \eqref{dF_4_dt_tilde_Ineq} (and several inequalities on the fractions and powers of $|\xi|$ involved), we get
\begin{eqnarray}\label{d_L_dt_Cattaneo_2}
\left.
\begin{array}{l}
\dfrac{d}{dt}\cLC(\xi ,t)+\left[ N_{2}(\eta -\tilde{\varepsilon}%
_{1})-N_{1}-N_{3}\tilde{%
\varepsilon}_{4}\right] \dfrac{|\xi |^{4}}{(1+|\xi |^{2})^2}|\hat{v}+\tau \hat{w}|^{2}%
\vspace{0.2cm} \\
+\left[ N_{1}(1-\epsilon _{0})-N_{2}\tilde{\varepsilon}_{2}\right] \dfrac{|\xi |^{6}}{(1+|\xi |^{2})^2}|\hat{u}+\tau \hat{v}%
|^{2}\vspace{0.2cm} \\
+\left[ N_{3}(\kappa -\tilde{\varepsilon}_{3})-N_{2}\eta -C\left( \epsilon
_{0}\right) N_{1}\right] \dfrac{|\xi |^{4}}{(1+|\xi|^2)^2}|\hat{\theta}|^{2} \vspace{0.2cm}\\
+[N_{0}-\tilde{\Lambda}]\left( 1+|\xi |^{2}+|\xi |^{4}+|\xi |^{6}\right) |%
\hat{q}|^{2}\leq 0,\qquad \forall t\geq 0,%
\end{array}%
\right.
\end{eqnarray}
where $\tilde{\Lambda}$ is a generic positive constants that depends on $\epsilon_0,\dots, N_1,\dots$, but it does not depend  on $N_0$. We fix the above constants as follows: we take $\epsilon_0,\tilde{\varepsilon}_1$ and $\tilde{\varepsilon}_3$ small enough such that
\begin{eqnarray*}
\epsilon_0<1,\quad\tilde{\varepsilon}%
_{1}<\eta,\quad \text{and}\quad \tilde{\varepsilon}%
_{3}<\kappa.
\end{eqnarray*}
Now, we fix $N_1=1$ and choose $N_2$ large enough such that
\begin{eqnarray*}
N_{2}>\frac{1}{\eta -\tilde{\varepsilon}%
_{1}}.
\end{eqnarray*}
 Then, we fix $N_3$ large enough such that
 \begin{eqnarray*}
 N_{3}(\kappa -\tilde{\varepsilon}_{3})>N_{2}\eta +C\left( \epsilon
_{0}\right).
\end{eqnarray*}
After that we pick $\tilde{\varepsilon}%
_{2}$ and  $\tilde{\varepsilon}%
_{4}$ small enough such that
 \begin{eqnarray*}
\tilde{\varepsilon}%
_{2}<\frac{1-\epsilon _{0}}{N_2}\qquad \text{and}\qquad\tilde{\varepsilon}%
_{4}<\frac{N_{2}(\eta -\tilde{\varepsilon}%
_{1})-1}{N_3}
\end{eqnarray*}
Once all the above constants are fixed, we take $N_0$ large enough such that $N_0>\tilde{\Lambda}$.
Consequently \eqref{d_L_dt_Cattaneo_2} becomes
\begin{eqnarray}\label{L_Main_alpha}
\dfrac{d}{dt}\cLC(\xi ,t)+\tilde{\alpha}\dfrac{|\xi |^{4}}{(1+|\xi |^{2})^2} \hat{\EcC}(\xi, t)\leq 0,\qquad \forall t\geq 0.
\end{eqnarray}
On the other hand, it is not hard to see that for all $t\geq 0$,
$$\cLC(\xi ,t)\sim (1+|\xi|^2+|\xi|^4+|\xi|^6)\hat{\EcC}(\xi, t). $$
Consequently, this together with \eqref{L_Main_alpha} leads to our desired result.
\end{proof}


\subsubsection{Eigenvalues expansion}
In this section, we compute the asymptotic expansion of the eigenvalues in the Cattaneo model when $\tau=\beta$ using, as before, the eigenvalues expansion method. In this case, and as a difference from the previous sections, this asymptotic behaviour will not be the same as the one that in the previous section gives us the decay and regularity of the norm related to the solution, as the following remark explains.
\begin{remark}\label{rmk:decayCatt_tau=beta}
The decay rate in Theorem \ref{thm:CattaneoDecay_tau=beta} comes from the exponent
$$\vrC(\xi)=\frac{|\xi|^4}{(1+|\xi|^2)^2(1+|\xi|^2+|\xi|^4+|\xi|^6)}, $$
of Proposition \ref{Main_Lemma_Cattaneo_2}. Observe that $\vrC(\xi)\sim |\xi|^4$ when $|\xi|\to 0$, and $\vrC(\xi)\sim |\xi|^{-6} $ when $|\xi|\to \infty$. From Lemmas \ref{lemma:eig_0_Catt_2} and \ref{lemma:eig_infty_Catt_2} below, we can see that the asymptotic behaviour of the real parts of the slowest  eigenvalues when $|\xi|\to 0$ is also $|\xi|^4$. But when $|\xi|\to\infty$ the slowest real parts of the eigenvalues behave as $|\xi|^{-2}$, which is a different rate than the one obtained in Proposition \ref{Main_Lemma_Cattaneo_2}. That allows us to conclude that the decay exponent obtained in this Proposition \ref{Main_Lemma_Cattaneo_2} may not optimal. And, hence, the decay result of Theorem \ref{thm:CattaneoDecay_tau=beta} may not optimal either. More concretely, while its decay rate is in agreement with the present results (as for $|\xi|\to 0$ both asymptotic behaviours agree), the regularity-loss  of the solutions (which comes from the asymptotic behaviour when $|\xi|\to\infty$) may be improved.
\end{remark}

First, observe that when $\tau=\beta$ the characteristic equation  \eqref{charpol_Cattaneo} takes the form
\begin{eqnarray}\label{charpol_Cattaneo_2}
&& \lambda^5+
\left(\frac{1}{\tau_0}+\frac{1}{\tau}\right)\lambda^4 +\frac{\eta^2\tau_0\zeta^4-\left( \kappa^2+\tau_0\right)\zeta^2+1}{\tau_0}\lambda^3  \\
\nonumber
&& +\frac{\left(\tau+\tau_0\right)\eta^2\zeta^2-\left(\tau+\tau_0+\kappa^2\right)}{\tau\tau_0}\,\zeta^2\lambda^2 +\frac{\left(\eta^2+\tau\kappa^2\right)\zeta^2-1}{\tau\tau_0}\,\zeta^2\lambda +\frac{\kappa^2\zeta^4}{\tau\tau_0}=0
\end{eqnarray}
where $\zeta=i|\xi|$.

\begin{lemma}\label{lemma:eig_0_Catt_2}
We assume that $0<\tau=\beta$. In this case, we get the following expansion for $|\xi|\rightarrow 0$:
\begin{eqnarray*}\label{Exp_Near_0_Cattaneo_2}
\left\{
\begin{array}{ll}
  \func{Re}(\lambda_{1,2})(\xi) &= -\dfrac{\eta^2\kappa^2}{2}|\xi|^4+O(|\xi|^5),\vspace{0.2cm} \\
  \func{Re}(\lambda_{3})(\xi) &= -\kappa^2|\xi|^2+O(|\xi|^3), \vspace{0.2cm} \\
  \func{Re}(\lambda_{4})(\xi) &= -\dfrac{1}{\tau}+O(|\xi|^2),\vspace{0.2cm}\\
  \func{Re}(\lambda_{5})(\xi) &= -\dfrac{1}{\tau_0}+O(|\xi|^2).
  \end{array}
  \right.
\end{eqnarray*}
In particular, all these real parts are negative.
\end{lemma}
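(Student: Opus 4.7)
The plan is to adapt the perturbative arguments already used for Lemmas \ref{Eigenvalues_Expansion_Lemma} and \ref{lemma:eig0_Fourier2} to the characteristic equation \eqref{charpol_Cattaneo_2} specialized to $\tau=\beta$. Setting $\zeta = 0$ in the quintic, the constant-in-$\zeta$ part reduces to
\[
\lambda^5 + \Bigl(\tfrac{1}{\tau_0}+\tfrac{1}{\tau}\Bigr)\lambda^4 + \tfrac{1}{\tau\tau_0}\lambda^3 = \lambda^3\bigl(\lambda+\tfrac{1}{\tau}\bigr)\bigl(\lambda+\tfrac{1}{\tau_0}\bigr),
\]
so at $\zeta=0$ the spectrum consists of a triple root at $\lambda = 0$ and two simple roots at $\lambda = -1/\tau$ and $\lambda = -1/\tau_0$. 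The two simple roots are non-degenerate, so by the implicit function theorem they extend analytically to $\lambda_4(\zeta)$, $\lambda_5(\zeta)$ with $\lambda_4(0)=-1/\tau$ and $\lambda_5(0)=-1/\tau_0$. Since every coefficient of \eqref{charpol_Cattaneo_2} is a polynomial in $\zeta^2$, the first non-trivial correction is $O(\zeta^2)$, and using $\zeta^2 = -|\xi|^2$ this yields directly $\func{Re}(\lambda_4)(\xi) = -1/\tau + O(|\xi|^2)$ and $\func{Re}(\lambda_5)(\xi) = -1/\tau_0 + O(|\xi|^2)$.

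To treat the triple root at zero I would use the Newton polygon of \eqref{charpol_Cattaneo_2} (or test trial solutions $\lambda \sim c\zeta^\alpha$) to identify the dominant balances: two branches satisfy $\lambda_{1,2}(\zeta)=\pm\zeta + O(\zeta^2)$ and one branch satisfies $\lambda_3(\zeta)=b\zeta^2 + O(\zeta^3)$. Plugging $\lambda_3 = b\zeta^2 + O(\zeta^3)$ into \eqref{charpol_Cattaneo_2} and collecting the lowest non-trivial order (which turns out to be $\zeta^4$, produced by the $\lambda^3$ term and the constant $\kappa^2\zeta^4/(\tau\tau_0)$) forces $b = \kappa^2$; hence $\func{Re}(\lambda_3)(\xi) = -\kappa^2|\xi|^2 + O(|\xi|^3)$.

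For the conjugate pair I substitute $\lambda = \pm\zeta + a_2\zeta^2 + a_3\zeta^3 + a_4\zeta^4 + O(\zeta^5)$ into \eqref{charpol_Cattaneo_2} and match coefficients order by order. The key observation, mirroring Lemma \ref{lemma:eig0_Fourier2}, is that the viscoelastic contribution that would have produced a non-zero $a_2$ comes with a factor $(\beta-\tau)$, which vanishes under the present hypothesis; the order-$\zeta^2$ balance then forces $a_2 = 0$. Consequently the leading real correction sits at order $\zeta^4 = |\xi|^4$, and the order-$\zeta^4$ balance (once $a_3$ has been recovered from the order-$\zeta^3$ balance) yields $a_4 = -\eta^2\kappa^2/2$, whence $\func{Re}(\lambda_{1,2})(\xi) = -\tfrac{\eta^2\kappa^2}{2}|\xi|^4 + O(|\xi|^5)$, which is strictly negative.

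The main obstacle is the bookkeeping at order $\zeta^4$ for the $\lambda_{1,2}$ branches: the coefficient $a_4$ couples through Vieta-type identities to $a_3$ and to the sub-leading corrections of the other branches, so one must either carry out the expansion patiently by hand or rely on a symbolic algebra check. The cancellation $a_2 = 0$ that takes place precisely when $\tau = \beta$ is the algebraic mechanism responsible for the slower $|\xi|^4$ (rather than $|\xi|^2$) decay appearing in Theorem \ref{thm:CattaneoDecay_tau=beta}.
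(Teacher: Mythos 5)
Your proposal is correct and follows essentially the same route as the paper: the paper's proof simply states the expansions $\lambda_{1,2}(\zeta)=\pm\zeta\pm(\tfrac{\eta^2}{2}+\tfrac{5\tau^2}{4})\zeta^3-\tfrac{\eta^2\kappa^2}{2}\zeta^4+O(\zeta^5)$, $\lambda_3(\zeta)=\kappa^2\zeta^2+O(\zeta^3)$, $\lambda_4(\zeta)=-\tfrac{1}{\tau}+O(\zeta^2)$, $\lambda_5(\zeta)=-\tfrac{1}{\tau_0}+O(\zeta^2)$ for the roots of \eqref{charpol_Cattaneo_2} and converts back via $\zeta=i|\xi|$, and your branch identification, the cancellation of the $(\beta-\tau)$-proportional coefficient at order $\zeta^2$, and the resulting values all agree with this. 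One small slip: for the $\lambda_3$ branch the $O(\zeta^4)$ balance that forces $b=\kappa^2$ is between the term \emph{linear} in $\lambda$ (whose coefficient is $\tfrac{(\eta^2+\tau\kappa^2)\zeta^2-1}{\tau\tau_0}\zeta^2$) and the constant term $\tfrac{\kappa^2\zeta^4}{\tau\tau_0}$, not the $\lambda^3$ term, which only contributes at $O(\zeta^6)$ on that branch; the conclusion is unaffected.
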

\begin{pf}
As in the previous sections, it is easy to compute that for $\zeta\rightarrow 0$, we have the following asymptotic expansion for the roots of \eqref{charpol_Cattaneo_2}:
\begin{eqnarray*}
  \lambda_{1,2}(\zeta) &=& \pm \zeta \pm \left(\frac{\eta^2}{2}+\frac{5\tau^2}{4}\right)\zeta^3-\frac{\eta^2\kappa^2}{2}\zeta^4+O(\zeta^5), \\
  \lambda_{3}(\zeta) &=& \kappa^2 \zeta^2+O(\zeta^3), \\
  \lambda_{4}(\zeta) &=& -\frac{1}{\tau}+O(\zeta^2),\\
  \lambda_{5}(\zeta) &=& -\frac{1}{\tau_0}+O(\zeta^2).
\end{eqnarray*}
Now, coming back to the variable $\xi$, we get the following expansion for $|\xi|\rightarrow 0$:
\begin{eqnarray*}\label{Exp_Near_0_Cattaneo_2}
\left\{
\begin{array}{ll}
  \func{Re}(\lambda_{1,2})(\xi) &= -\dfrac{\eta^2\kappa^2}{2}|\xi|^4+O(|\xi|^5),\vspace{0.2cm} \\
  \func{Re}(\lambda_{3})(\xi) &= -\kappa^2|\xi|^2+O(|\xi|^3), \vspace{0.2cm}\\
  \func{Re}(\lambda_{4})(\xi) &= -\dfrac{1}{\tau}+O(|\xi|^2),\vspace{0.2cm}\\
  \func{Re}(\lambda_{5})(\xi) &= -\dfrac{1}{\tau_0}+O(|\xi|^2),
  \end{array}
  \right.
\end{eqnarray*}
which, indeed, are negative.
\end{pf}

\begin{lemma}\label{lemma:eig_infty_Catt_2}
We assume that $0<\tau=\beta$. In this case, we get the following expansion for $|\xi|\rightarrow \infty$:
\begin{equation*}\label{Asym_Expan_Infty_Cattaneo_2}
\left\{
\begin{array}{ll}
  \func{Re}(\lambda_{1,2})(\xi) &= -\dfrac{\kappa^2\tau^2}{2\tau^2\eta^2\tau_0^2} |\xi|^{-2} +O(|\xi|^{-3}), \vspace{0.3cm}\\
  \func{Re}(\lambda_{3,4,5})(\xi) &= \tilde{\sigma}_{3,4,5} + O(|\xi|^{-1}),
\end{array}
\right.
\end{equation*}
with $\tilde{\sigma}_{3,4,5}<0$, as the rest of the real parts.
\end{lemma}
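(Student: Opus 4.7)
The plan is to follow the same methodology as in Lemma \ref{Eigenvalues_Expansion_Lemma_2}, adapted to the characteristic equation \eqref{charpol_Cattaneo_2} which corresponds to the case $\tau=\beta$. I would introduce the change of variable $\nu = \zeta^{-1} = (i|\xi|)^{-1}$ together with $\mu = \nu^2\lambda$, so that $\lambda = -|\xi|^2\mu$, and transform \eqref{charpol_Cattaneo_2} into a polynomial equation $P(\mu,\nu)=0$. The asymptotic behavior of $\lambda_j(\xi)$ for $|\xi|\to\infty$ is then read off from the expansion of $\mu_j(\nu)$ as $\nu\to 0$, using the relation
\begin{equation*}
\lambda_j(\xi) = -\mu_j^0|\xi|^2 + \mu_j^1 i|\xi| + \mu_j^2 - \mu_j^3 i|\xi|^{-1} - \mu_j^4|\xi|^{-2} + O(|\xi|^{-3}).
\end{equation*}

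Setting $\nu=0$ in the transformed polynomial yields the leading terms $\mu_j^0$. I expect two roots with $\mu_{1,2}^0 = \pm i\eta$ (producing $\lambda_{1,2}\sim \mp i\eta|\xi|^2$, purely imaginary at leading order) and three roots with $\mu_{3,4,5}^0 = 0$. For $\mu_{1,2}(\nu)$, I would push the expansion up to order $\nu^4$ in order to extract the real part of $\lambda_{1,2}$ at order $|\xi|^{-2}$. Since equation \eqref{charpol_Cattaneo_2} is obtained from \eqref{charpol_Cattaneo} by the substitution $\beta=\tau$, the coefficient of $|\xi|^{-2}$ in $\func{Re}(\lambda_{1,2})$ is obtained from the general formula in Lemma \ref{Eigenvalues_Expansion_Lemma_2} by this same substitution, giving precisely the claimed value $-\kappa^2\tau^2/(2\tau^2\eta^2\tau_0^2)$.

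For the three roots $\mu_{3,4,5}(\nu)$, writing $\mu_j(\nu) = \tilde{\sigma}_j\nu^2 + O(\nu^3)$ and substituting into the transformed equation, the leading coefficients $\tilde{\sigma}_{3,4,5}$ satisfy the cubic equation obtained by setting $\tau=\beta$ in \eqref{sigma2}, namely
\begin{equation*}
\tilde{f}(X) = \tau\eta^2\tau_0\, X^3 + \eta^2(\tau+\tau_0)\, X^2 + (\eta^2+\tau\kappa^2)\, X + \kappa^2 = 0.
\end{equation*}

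The main obstacle will be showing that these three roots all have negative real parts. I would mirror the sign analysis of Lemma \ref{Eigenvalues_Expansion_Lemma_2}: since $\tilde{f}$ is a real cubic, it has at least one real root $\tilde{\sigma}_3$. Evaluating $\tilde{f}(0) = \kappa^2 > 0$ and $\tilde{f}(-1/\tau - 1/\tau_0)$, which simplifies (using $\tau=\beta$ and positivity of all parameters) to a strictly negative quantity, the intermediate value theorem places $\tilde{\sigma}_3 \in (-1/\tau-1/\tau_0, 0)$. Factoring $\tilde{f}(X) = (X-\tilde{\sigma}_3)(\tau\eta^2\tau_0 X^2 + d_1 X + d_0)$ with $d_1 = \tau\eta^2\tau_0\tilde{\sigma}_3 + \eta^2(\tau+\tau_0)$ and $d_0 = -1/\tilde{\sigma}_3$, the localization of $\tilde{\sigma}_3$ forces both $d_1 > 0$ and $d_0 > 0$. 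By Vieta's formulas applied to the quadratic factor, the sum of $\tilde{\sigma}_4,\tilde{\sigma}_5$ is negative and their product is positive, so whether they are real or complex conjugate, both have $\func{Re}(\tilde{\sigma}_{4,5})<0$. Combined with the expansion $\lambda_{3,4,5}(\xi) = -|\xi|^2\mu_{3,4,5}(\nu) = \tilde{\sigma}_{3,4,5} + O(|\xi|^{-1})$, this establishes the claimed negativity.
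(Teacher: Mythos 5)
Your proposal is correct and follows essentially the same route as the paper: the substitution $\nu=\zeta^{-1}$, $\mu=\nu^2\lambda$, the expansion of $\mu_{1,2}$ to order $\nu^4$ (whose $|\xi|^{-2}$ coefficient is indeed obtained from the general formula of Lemma \ref{Eigenvalues_Expansion_Lemma_2} by setting $\beta=\tau$), and the reduction of $\tilde{\sigma}_{3,4,5}$ to the roots of the cubic \eqref{sigma2} with $\beta=\tau$. The one place where you diverge is the treatment of that cubic: you re-run the intermediate-value/Vieta sign analysis of Lemma \ref{Eigenvalues_Expansion_Lemma_2} (which does go through, since $\tilde f(-1/\tau-1/\tau_0)=-\bigl(\eta^2(\tau+\tau_0)+\tau^2\kappa^2\bigr)/(\tau\tau_0)<0$ without needing $\tau<\beta$), whereas the paper observes that for $\tau=\beta$ the cubic factors explicitly, with $X=-1/\tau$ a root and the remaining quadratic $\tau(\eta^2\tau_0X^2+\eta^2X+\kappa^2)$ giving $\tilde{\sigma}_{4,5}=-\bigl(\eta\pm\sqrt{\eta^2-4\kappa^2\tau_0}\bigr)/(2\eta\tau_0)$; the explicit roots make the negativity immediate, but your argument is equally valid and slightly more robust. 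One harmless slip: with your normalization $\tilde f(X)=\tau\eta^2\tau_0X^3+\eta^2(\tau+\tau_0)X^2+(\eta^2+\tau\kappa^2)X+\kappa^2$, the constant of the quadratic factor is $d_0=-\kappa^2/\tilde{\sigma}_3$ rather than $-1/\tilde{\sigma}_3$ (the paper itself is inconsistent here between \eqref{sigma2} and \eqref{Equation_Third_Order}); this does not affect the sign conclusion.
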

\begin{pf}
We proceed as in the analogous lemmas of the previous sections (see, for instance, Lemma \ref{Eigenvalues_Expansion_Lemma}). Hence, applying the change of variables $\nu=\zeta^{-1}=(i|\xi|)^{-1}$, computing the asymptotic expansion of $\mu(\nu)$ (the roots of the corresponding characteristic polynomial for $\nu$) when $\nu\to 0$, and coming back to the eigenvalues $\lambda(\xi)$, we can easily show that the following expansion holds
\begin{equation*}\label{Asym_Expan_Infty_Cattaneo_2}
\left\{
\begin{array}{ll}
  \func{Re}(\lambda_{1,2})(\xi) &= -\dfrac{\kappa^2\tau^2}{2\tau^2\eta^2\tau_0^2} |\xi|^{-2} +O(|\xi|^{-3}), \vspace{0.3cm}\\
  \func{Re}(\lambda_{3,4,5})(\xi) &= \tilde{\sigma}_{3,4,5} + O(|\xi|^{-1}),
\end{array}
\right.
\end{equation*}
with $\tilde{\sigma}_{3,4,5}$ being the roots of \eqref{Equation_Third_Order} when $\tau=\beta$. In this case, these roots can be explicitly calculated and are 
$$\tilde{\sigma_3}=-\dfrac{1}{\tau},\qquad \tilde{\sigma_{4,5}}=-\dfrac{\eta \pm \sqrt{\eta^2-4\kappa^2\tau_0}}{2\eta\tau_0}.$$
Hence, we can conclude all the eigenvalues have negative real parts in this case too.
\end{pf}

\section*{Acknowledgements}
M. Pellicer is part of the Catalan Research group 2017 SGR 1392 and has been supported by the MINECO grant MTM2017-84214-C2-2-P (Spain), and also by MPC UdG 2016/047 (U. of Girona, Catalonia).


\end{document}